\theoremstyle{definition}
\newtheorem{definition}{Definition}[section]
\newtheorem*{remark*}{Remark}
\newtheorem*{ackname}{Acknowledgment}
\newtheorem{step}{Step}
\newtheorem{stepA}{Step}
\theoremstyle{theorem}
\newtheorem{theorem}[definition]{Theorem}
\newtheorem*{theorem*}{Theorem}
\newtheorem{corollary}[definition]{Corollary}
\newtheorem{lemma}[definition]{Lemma}
\newtheorem{claim}[definition]{Claim}
\renewcommand{\phi}{\varphi}
\renewcommand{\Re}{\mathrm{Re}\hspace{1pt}}
\renewcommand{\epsilon}{\varepsilon}
\newcommand{\FOR}{\quad \textrm{for} \quad}
\DeclareMathOperator{\supp}{supp}
\DeclareMathOperator{\meas}{meas}
\DeclareMathOperator{\LF}{\mathcal{L}}
\DeclareMathOperator{\abs}{abs}
\begin{document}

\title[Effective uniform approximation]{Effective uniform approximation by $L$-functions in the Selberg class}

\author[K.~Endo]{Kenta Endo}

\subjclass[2010]{11M41}

\keywords{Selberg class, Value-distribution, Universality theorem}

\maketitle

\begin{abstract}
Recently, Garunk\v{s}tis, Laurin\u{c}ikas, Matsumoto, J.~\& R.~Steuding showed an effective universality-type theorem for the Riemann zeta-function by using an effective multi-dimensional denseness result of Voronin. 
We will generalize Voronin's effective result and their theorem to the elements of the Selberg class satisfying some conditions. 
\end{abstract}


\section{Introduction and Main results}
Let $\zeta(s)$ denote the Riemann zeta-function, where $s = \sigma + it$ stands for a complex variable. 
In 1975, Voronin \cite{V1975} discovered the universality theorem for the Riemann zeta-function, 
which states as follows;

\begin{theorem}\label{thm:VU}
Let $0 < r < 1/4$. 
Suppose that $f(s)$ is a non-vanishing continuous function on the closed disk $|s| \leq r$ and analytic in the disk $|s| < r$.
Then we have, for any $\epsilon>0$, 
\begin{equation}\label{eqn:LBU}
\liminf_{T \rightarrow \infty} 
\frac{1}{T} \meas \left\{ \tau \in [0 , T] ~;~ \max_{|s| \leq r }\left| \zeta(s + 3/4 + i \tau) - f(s) \right| < \epsilon \right\}
>0.
\end{equation}
\end{theorem}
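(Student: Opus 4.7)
My plan follows Voronin's original strategy, which decomposes into three conceptual stages: (a) a mean-square approximation of $\zeta(s+3/4+i\tau)$ by truncated Euler products, (b) a Hilbert-space denseness statement for finite Euler products with arbitrary ``phases'', and (c) a Diophantine realization of a winning phase configuration via Kronecker's theorem.

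First, I exploit the non-vanishing hypothesis: since $f$ is continuous and non-vanishing on $|s|\leq r$ and analytic in the interior, I write $f(s)=\exp g(s)$ for some $g$ which, after shrinking $r$ slightly, is analytic on a marginally larger closed disk. The task becomes to approximate $\log\zeta(s+3/4+i\tau)$ by $g(s)$ uniformly on $|s|\leq r$. Since $3/4>1/2$, Carlson-type second-moment bounds for $\zeta$, together with the partial Euler product technique, allow me to approximate $\log\zeta(s+3/4+i\tau)$ in a mean-square sense over $\tau\in[0,T]$ and uniformly on $|s|\leq r$ by the truncated sum
\[
\sum_{p\leq N} -\log\!\left(1 - p^{-s-3/4-i\tau}\right),
\]
with error tending to $0$ as $N\to\infty$ uniformly in $T$.

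Next I establish the central denseness lemma: the set
\[
\left\{ -\sum_{p\leq N}\log\!\left(1 - e^{-i\theta_p} p^{-s-3/4}\right) : N\geq 1,\ (\theta_p)\in[0,2\pi)^{\pi(N)} \right\}
\]
is dense in the Hilbert space of square-integrable analytic functions on $|s|<r$. This is obtained from a Pecherskii-type rearrangement theorem in Hilbert space: one checks that $\sum_p \|\log(1-p^{-s-3/4})\|^2$ diverges while each term is small, and that the phases $e^{-i\theta_p}$ give a rich enough orbit to reach every analytic target $g$. Hence I can select $(\theta_p)_{p\leq N}$ so that the truncated product is within $\epsilon/3$ of $f$ uniformly on $|s|\leq r$; passing from an $L^2$-bound to a sup-bound on the smaller disk is done via Cauchy's integral formula.

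Finally, Kronecker's simultaneous Diophantine approximation theorem, applied to the $\mathbb{Q}$-linearly independent numbers $\{\log p\}_p$, shows that for any fixed $N$ and any $\delta>0$ the set of $\tau\in[0,T]$ satisfying $\|\tau\log p/(2\pi)-\theta_p/(2\pi)\|<\delta$ for all $p\leq N$ has positive lower density. Combining this with the mean-square step of the second paragraph produces, once $\delta$ is small and $N$ is large, a positive-density set of $\tau\in[0,T]$ with $\max_{|s|\leq r}|\zeta(s+3/4+i\tau)-f(s)|<\epsilon$. The main obstacle is the Hilbert-space denseness in step three: it is there that the arithmetic content of the primes (independence of $\{\log p\}$ over $\mathbb{Q}$ together with divergence of $\sum 1/p$) must be converted into a purely analytic approximation statement, whereas the Euler-product approximation and the Kronecker step are relatively standard once that denseness is in hand.
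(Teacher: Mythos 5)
The paper does not give a proof of Theorem~\ref{thm:VU}: it is cited directly from Voronin's 1975 paper, and the introduction merely names the classical ingredients---Pe\v{c}erski\u{\i}'s rearrangement theorem in Hilbert space and Kronecker's approximation theorem---as the ineffective machinery which the paper's own effective results are designed to replace (by a Vandermonde/short-interval argument and a mollified mean-value amplification, respectively). Your sketch reproduces exactly that standard ineffective route, so it is a reasonable outline of Voronin's argument, but it is by design the approach the rest of the paper works to avoid.

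One concrete error in your Pe\v{c}erski\u{\i} step: you assert that $\sum_p \|\log(1 - p^{-s-3/4})\|^2$ diverges, but the hypothesis of the rearrangement theorem is the \emph{convergence} of the squared norms, $\sum_p \|u_p\|^2 < \infty$, which indeed holds here since $\|p^{-s-3/4}\|^2 \asymp p^{-3/2 + 2r}$ with $3/2 - 2r > 1$ for $r < 1/4$. The divergence one actually needs is of a different kind: $\sum_p |\langle u_p, e\rangle| = \infty$ for every nonzero $e$ in the Hilbert space, and it is in verifying this that the arithmetic content (essentially the divergence of $\sum_p 1/p$ together with the prime number theorem) enters; the $\mathbb{Q}$-linear independence of $\{\log p\}$ is consumed later, in the Kronecker step, as you correctly note. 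With that correction your outline matches the classical proof.
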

Roughly speaking, 
Theorem \ref{thm:VU} says that the Riemann zeta-function can approximate any non-vanishing holomorphic function. 
The theory of the universality theorem has been developed in various directions.
See e.g. \cite{L1996}, \cite{S2007} for the general theory of the universality and  a survey paper \cite{M2015} for the recent studies.

In one of these developments, 
there are some studies of the refinement to an effective form of Voronin's universality theorem.
For example, there arose the questions on how large the value \eqref{eqn:LBU} is or how small the shift $\tau$ we can take to make $\zeta(s + i \tau)$ approximate a given non-vanishing holomorphic function. 
In proving the universality theorem, Pe\v{c}erski\u{\i}'s rearrangement theorem \cite{P1973} in Hilbert space and Kronecker's approximation theorem \cite[Appendix 8, Theorem 1]{KV1992} are used.
Since these theorems are ineffective, 
it is difficult in general to obtain the effective results of the universality theorem.

As recent effective results, 
we give examples like \cite{G1981}, \cite{G2003}, \cite{GG2003}, \cite{S2003}, \cite{S2005}, \cite{GLMSS2010}, \cite{LLR2018} and refer to \cite{L2013} for a good survey of effectivization problem of the universality theorem.

Recently, Garunk\v{s}tis, Laurin\u{c}ikas, Matsumoto, J.~\& R.~Steuding \cite{GLMSS2010} refined Matsumoto's universality-type theorem into the effective form. 
Matsumoto's theorem is written in the survey paper \cite[Section $3$]{M2006}.
His idea is that one can deduce universality-type theorem from Voronin's multi-dimensional denseness theorem by using the Taylor expansion series. 
Voronin's multi-dimensional theorem \cite{V1972} asserts the following;  

\begin{theorem}\label{thm:VDT}
Let $n \in \mathbb{N}$ and  $1/2 < \sigma \leq 1$.
Then the set 
\[
\{ ( \zeta(\sigma + i t ), \zeta'(\sigma + i t), \ldots, \zeta^{(n -1)}(\sigma + i t) ) ~;~ t \in \mathbb{R}\}
\]
is dense in $\mathbb{C}^n$.
\end{theorem}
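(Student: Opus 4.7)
The plan is to follow Voronin's classical strategy, which decomposes the problem into three ingredients: a mean-square approximation of $\zeta^{(k)}(\sigma+it)$ by a truncated Euler product, a passage to a torus model exploiting the $\mathbb{Q}$-linear independence of $\{\log p\}_p$, and Pe\v{c}erski\u{\i}'s rearrangement theorem to show that the resulting image in $\mathbb{C}^n$ is dense.

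First, for $1/2 < \sigma \leq 1$ and each $k = 0, \ldots, n-1$, I would establish a mean-square bound
\[
\frac{1}{T} \int_0^T \bigl| \zeta^{(k)}(\sigma+it) - \zeta_X^{(k)}(\sigma+it) \bigr|^2\, dt \longrightarrow 0 \quad (X \to \infty)
\]
uniformly in $T \geq T_0(X)$, where $\zeta_X(s) = \prod_{p \leq X}(1-p^{-s})^{-1}$ (possibly after a smoothed truncation to secure convergence inside the critical strip). This reduces the problem to showing that the image of $t \mapsto (\zeta_X(\sigma+it), \ldots, \zeta_X^{(n-1)}(\sigma+it))$ meets every $\epsilon$-ball around any prescribed $\underline{w} \in \mathbb{C}^n$ for $X$ sufficiently large.

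Second, I would introduce the torus model: for an angle vector $\underline{\theta} = (\theta_p)_{p \leq X}$,
\[
\zeta_X(\sigma; \underline{\theta}) = \prod_{p \leq X} \bigl(1 - p^{-\sigma}e^{-i\theta_p}\bigr)^{-1},
\]
with $\sigma$-derivatives defined by differentiating through the product. Kronecker's approximation theorem, combined with the $\mathbb{Q}$-linear independence of $\{\log p\}_p$, allows me to find real $t$ realizing any prescribed $\underline{\theta}$ as $(t \log p)_{p \leq X}$ modulo $2\pi$ to within $\epsilon$. It therefore remains to show that the image of $\mathbb{T}^{\pi(X)}$ under $\underline{\theta} \mapsto (\zeta_X(\sigma;\underline{\theta}), \ldots, \zeta_X^{(n-1)}(\sigma;\underline{\theta}))$ is dense in $\mathbb{C}^n$ for all sufficiently large $X$. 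I would prove this by passing to logarithms and applying a rearrangement theorem in Hilbert space (Pe\v{c}erski\u{\i}, or L\'evy--Steinitz in this finite-dimensional setting): the $\mathbb{C}^n$-valued series arising from $\log \zeta_X$ and its $\sigma$-derivatives is $\ell^2$-summable thanks to $\sigma > 1/2$, and the prime-indexed vectors do not degenerate into a proper real affine subspace, which yields density of the set of rearrangement sums.

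The step I expect to be hardest is the rearrangement application: one must verify the multi-dimensional hypotheses in all $n$ coordinates simultaneously, check the non-degeneracy of the spanned subspace, and then translate the abstract rearrangement conclusion into explicit choices of angles $\theta_p$ that remain compatible with the exponentiation returning from $\log \zeta_X$ to $\zeta_X$ and its derivatives. Keeping the $n$ components in lockstep --- so that one simultaneously approximates $\zeta_X^{(k)}$ for all $k < n$, not just one at a time --- is the technical heart of the proof.
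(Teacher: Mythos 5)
Your outline reproduces the classical Voronin argument, which is precisely what the paper refers to: the paper does not reprove Theorem~\ref{thm:VDT} but cites Voronin~\cite{V1972} and remarks that the proof uses Kronecker's and Pe\v{c}erski\u{\i}'s theorems, the two tools you invoke. Your three-step decomposition (mean-square approximation by a truncated Euler product, transfer to the torus via Kronecker/Weyl, and denseness of the torus image via a rearrangement theorem applied to the $\ell^2$-summable log-series) is the standard route, and your closing remarks correctly flag the non-degeneracy check and the exponential/logarithm bookkeeping between $\log\zeta_X$ and $(\zeta_X,\ldots,\zeta_X^{(n-1)})$ as the places where the technical care is required.
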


Theorem \ref{thm:VDT} is not effective since Kronecker's and Pe\v{c}erski\u{\i}'s theorems are also used in the proof. 
Hence Matsumoto's universality-type theorem was ineffective. 
To obtain the effective version of Matsumoto's theorem, 
Garunk\v{s}tis, Laurin\u{c}ikas, Matsumoto, J.~\& R.~Steuding directed their attention to the following theorem by Voronin \cite{V1988}, 
which is an effective version of Theorem \ref{thm:VDT}.
\begin{theorem}\label{thm:VET}
Let $N\in\mathbb{N}$ and $\sigma_0 \in (1/2, 1)$ and $\underline{b} = (b_0, b_1, \ldots, b_{N-1}) \in \mathbb{C}^N$ with $| b_0 | > \varepsilon > 0$.
Then a sufficient condition for the system of inequalities 
\[
\left|  \zeta^{(k)}( \sigma_0 + i t) - b_k \right| < \varepsilon, \quad k = 0, \dots, N-1
\]
to have a solution $t \in [T, 2T]$ is that
\[
T > c_0(N, \sigma_0) \exp\exp\left( c_1(N, \sigma_0) A(N, \underline{b}, \epsilon)^{\frac{8}{1 - \sigma_0} + \frac{8}{\sigma_0 - 1/2}}  \right),
\]
where $c_0(N, \sigma_0)$ and $c_1(N, \sigma_0)$ are a positive \footnote{
It is written in \cite{KV1992} that the constant $c_1(N, \sigma_0)$ can be taken $5$.
However one cannot prove this fact by the method in \cite{KV1992}.
This is probably a mistake.
}, effectively computable constant, depending only on $N$ and $\sigma_0$, 
and
\[
A(n, \underline{b}, \epsilon) 
= \left|  \log b_0  \right| + \left( \frac{\| \underline{b} \|}{\epsilon} \right)^{N^2}
\]
with $\| \underline{b} \| = \sum_{0 \leq k <N} | b_k | $. 
Here the above branch of $\log b_0$ can be taken arbitrarily.
\end{theorem}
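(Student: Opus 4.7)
The plan is to reduce Theorem \ref{thm:VET} to an effective approximation for $\log\zeta$ combined with a quantitative simultaneous Diophantine question. Since $|b_0|>\varepsilon$, one may choose a branch of $\log b_0$ and, by inverting Fa\`a di Bruno's formula for $\exp$, produce coefficients $c_0 = \log b_0,\, c_1, \ldots, c_{N-1}$ such that the polynomial
\[
g(s) = \sum_{k=0}^{N-1} \frac{c_k}{k!}(s-\sigma_0)^k
\]
satisfies $(\exp g)^{(k)}(\sigma_0) = b_k$ for $0 \le k \le N-1$. Controlling the error under exponentiation costs a factor polynomial in $\|\underline{b}\|/\varepsilon$, so the task reduces to finding $t \in [T,2T]$ making the first $N$ Taylor coefficients of $\log\zeta(\,\cdot\, + it)$ at $\sigma_0$ match $c_0, \ldots, c_{N-1}$ to within some $\varepsilon' > 0$ depending polynomially on $\varepsilon$, $1/\|\underline{b}\|$, and $1/|\log b_0|$.

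First I would approximate $\log\zeta(s+it)$ on a closed disk of radius $\rho = \tfrac12\min(\sigma_0 - 1/2,\, 1 - \sigma_0)$ around $\sigma_0$ by the truncated Euler sum
\[
P_M(s+it) = -\sum_{p \le M} \log\bigl(1 - p^{-s-it}\bigr),
\]
using an effective mean-square estimate on $[T, 2T]$ of Karatsuba--Voronin type. Cauchy's integral formula on that disk then converts $L^2$-on-average control into uniform control of the first $N$ Taylor coefficients; the losses from the two contour integrations (one for the mean-square bound, one for the coefficient extraction) account for the exponents $1/(\sigma_0 - 1/2)$ and $1/(1-\sigma_0)$ appearing in the final estimate. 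Next, an effective Pecherskii-type rearrangement would select phases $\alpha_p \in [0,1)$ for $p \le M$ so that substituting $p^{-it} \mapsto e^{2\pi i \alpha_p}$ in $P_M$ yields a Dirichlet polynomial whose Taylor coefficients at $\sigma_0$ agree with $(c_0, \ldots, c_{N-1})$ to within $\varepsilon'/2$; keeping careful track of how each of the $N$ linear Taylor conditions magnifies the precision requirement is what produces the $N^2$ exponent in $A(N, \underline{b}, \varepsilon)$.

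Finally, an effective simultaneous Diophantine approximation in the torus $(\mathbb{R}/\mathbb{Z})^{\pi(M)}$ furnishes $t \in [T, 2T]$ with $\|t\log p/(2\pi) - \alpha_p\| < \delta$ for all $p \le M$, provided $T$ exceeds a pigeonhole bound polynomial in $1/\delta$ raised to a power proportional to $\pi(M)$. Inserting the values of $M$ and $\delta$ required by the previous two steps---each itself a polynomial in $A$ with an exponent depending on $\sigma_0$---yields the double-exponential lower bound on $T$ stated in the theorem.

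The hard part will be making the Pecherskii rearrangement step \emph{effective} with the stated polynomial dependence on the parameters: the classical proof proceeds via the Riesz representation theorem in a Hilbert space and is non-constructive, so it must be replaced by an explicit greedy/truncation construction whose error is controlled uniformly across all $N$ Taylor coefficients simultaneously. A secondary subtlety is ensuring that $|\log b_0|$ enters $A(N, \underline{b}, \varepsilon)$ additively rather than multiplicatively; this is achieved by handling the leading coefficient $c_0 = \log b_0$ in a preliminary step that only affects the modulus of continuity of the chosen branch, before fitting the remaining $N-1$ derivatives with a precision that is polynomial in $\|\underline{b}\|/\varepsilon$.
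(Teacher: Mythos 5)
Your outline is faithful to Voronin's method as the paper describes it (and generalizes it in the proof of Theorem~\ref{thm:MTH}), but the two ingredients you yourself flag as ``the hard part'' and ``a secondary subtlety'' are precisely where the substance of the proof lives, and the proposal leaves both as placeholders rather than proofs.

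On the effective substitute for Pe\v{c}erski\u{\i}'s theorem: you suggest ``an explicit greedy/truncation construction,'' but the mechanism actually used is structural, not greedy. One uses the prime number theorem in short intervals to guarantee that each of $N$ disjoint blocks $(Y_j, Y_j + H]$ contains enough primes with $|a(p)|$ bounded below (Claim~\ref{claim:PDM}), invokes the elementary geometric Lemma~\ref{lem:EG} (the range of $\sum r_n e^{-2\pi i\theta_n}$ fills a full disk once no single $r_n$ dominates the remaining sum) to solve $\phi_{\mathcal M_j}(\sigma_0,\underline\theta_j)=z_j$ exactly, and chooses the targets $z_j$ by inverting the Vandermonde system $\sum_j(-\log Y_j)^k z_j = c_k - \gamma_k$ with the quantitative bound of Lemma~\ref{lem:Van}. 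A greedy construction appears only in a preliminary normalization (Claim~\ref{claim:PS}, choosing $\underline\theta^{(0)}$ so that partial sums of $b(p)e^{-2\pi i\theta_p^{(0)}}$ stay bounded), which controls a tail term but is not where the simultaneous denseness of $N$ derivatives comes from. Without the PNT-in-short-intervals/geometric-lemma/Vandermonde combination, your ``keep careful track of how each of the $N$ linear Taylor conditions magnifies the precision'' has no concrete content, and in particular the stated exponent structure cannot be derived.

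On the Kronecker replacement: the claim that ``a pigeonhole bound polynomial in $1/\delta$ raised to a power proportional to $\pi(M)$'' suffices to hit a \emph{prescribed} target $(\alpha_p)$ on the torus is not justified; a pigeonhole only produces near-returns to the origin, not approximations to an arbitrary point, and any quantitative version must track how far the orbit $(t\log p/2\pi)_{p}$ is from equidistribution. Voronin's (and the paper's) mechanism is the mollifier / weighted mean-value argument: expand $\Phi_Q$ in a truncated Fourier series with truncation level $M = \exp(2C_1 Q)$ (needed so the tail $\exp(C_1 Q)/M$ is $O(1)$), and bound each nonzero mode's oscillatory integral $\int_T^{2T}\exp(it\sum n_p\log p)\,dt$ by $\exp(C_2 Q M)$ using $|\log(m/n)|>1/\max(m,n)$ for distinct integers. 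Balancing these forces $T\gg\exp\exp(CQ)$, and this double exponential is intrinsic to the method, not an artefact; your single-exponential pigeonhole bound, even if it were correct, would not be obtained this way and would need an entirely different justification. As written, the proposal identifies the right three-stage decomposition (exponentiation via Fa\`a di Bruno, truncated Euler sum with mean-square control, torus approximation), but it does not contain proofs of the two steps that make Voronin's result effective.
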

We remark that this result is also written in the textbook \cite{KV1992}, and the above statement is the form described in the textbook.
This result is also regarded as a kind of $\Omega$-results, which Voronin called it. 
He proved this effective result cleverly without using ineffective results as mentioned above. 
In his proof, 
Pe\v{c}erski\u{\i}'s theorem is replaced by a geometrical argument and an argument in which the system of the linear equation and the prime number theorem for short interval are used. 
Kronecker's approximation theorem is replaced by a kind of amplification technique in which we estimate a certain weighted mean value.
Garunk\v{s}tis, Laurin\u{c}ikas, Matsumoto, J.~\& R.~Steuding \cite{GLMSS2010} used this effective result to refine Matsumoto's theorem. 
Since the author found a slight mistake in their statement \cite{GLMSS2010}, 
we shall mention the modified version of this statement as follows; 

\begin{theorem}[Modified version of the result \cite{GLMSS2010}]
Let $s_0 = \sigma_0 + i t_0$, $1/2 < \sigma_0 < 1$, $r >0$, $\mathcal{K} = \{ s \in \mathbb{C} ~;~ | s - s_0 | \leq r \}$, and suppose that $g:\mathcal{K} \rightarrow \mathbb{C}$ is an analytic function with $g(s_0) \neq 0$. 
Put $M(g) =\max_{ | s - s_0 | = r } | g (s) |$.
Fix $\epsilon \in ( 0, 1)$ and $0 < \delta_0 <1$.
If $N = N ( \delta_0, \epsilon, g)$ and $T = T(g, \epsilon, \sigma_0, \delta_0, N)$ satisfy
\[
M(g) \frac{\delta_0^N}{1 - \delta_0} 
< \frac{\epsilon}{3}
\]
and
\[
T 
\geq \max\left\{ c_0( \sigma_0, N ) \exp\exp \left( c_1(\sigma_0, N) A \left( N, \mathbf{g}, (\epsilon/3)\exp( - \delta_0 r ) \right)^{d(\sigma_0, E_{\LF})} \right) , r \right\}, 
\]
respectively, 
then there exists $\tau \in [T -  t_0, 2T - t_0 ]$ such that
\[
\max_{| s - s_0 | \leq \delta r} \left| \zeta ( s + i \tau) - g (s) \right| < \epsilon
\]
for any $0 \leq \delta \leq \delta_0$ satisfying
\[
M(\tau) \frac{\delta^N}{1 - \delta}
< \frac{\epsilon}{3}.
\]
Here $c_0 ( \sigma_0, N )$, $c_1 ( \sigma_0, N )$ and $A \left( N, \mathbf{g}, (\epsilon/3)\exp( - \delta_0 r ) \right)$ are the same constants as in Theorem \ref{thm:VET} with $\mathbf{g}
= \left( g(s_0), g'(s_0), \ldots, g^{(N-1)} (s_0) \right)$
, and $M(\tau)$ is defined by $M(\tau) = \max_{ | s - s_0 | = r } | \zeta( s + i \tau ) |$.
\end{theorem}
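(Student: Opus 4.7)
The plan is to run Matsumoto's Taylor-expansion scheme with Theorem \ref{thm:VET} standing in for its ineffective counterpart Theorem \ref{thm:VDT}. Given the target $\tau$ (to be produced below) and any $\delta\leq\delta_0$ satisfying the hypothesis on $M(\tau)$, I would start, for $|s-s_0|\leq \delta r$, from the triangle-inequality decomposition
\[
|\zeta(s+i\tau)-g(s)|
\leq |R_\zeta(s,\tau)|
+\Bigl|\sum_{k=0}^{N-1}\frac{\zeta^{(k)}(s_0+i\tau)-g^{(k)}(s_0)}{k!}(s-s_0)^k\Bigr|
+|R_g(s)|,
\]
where $R_g,R_\zeta$ denote the tails of the Taylor expansions around $s_0$ from order $N$ on. The goal is to force each of the three summands to be at most $\epsilon/3$.

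For the two tails, Cauchy's estimates on the circle $|s-s_0|=r$ give $|g^{(k)}(s_0)/k!|\leq M(g)/r^k$ and $|\zeta^{(k)}(s_0+i\tau)/k!|\leq M(\tau)/r^k$, so both tails are dominated by geometric series:
\[
|R_g(s)|\leq \frac{M(g)\,\delta^N}{1-\delta}\leq \frac{M(g)\,\delta_0^N}{1-\delta_0}<\frac{\epsilon}{3},
\qquad
|R_\zeta(s,\tau)|\leq \frac{M(\tau)\,\delta^N}{1-\delta}<\frac{\epsilon}{3},
\]
which is precisely what the two size hypotheses on $N$ and on $\delta$ assert.

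For the middle term, I would invoke Theorem \ref{thm:VET} with $\underline{b}=\mathbf{g}=(g(s_0),\ldots,g^{(N-1)}(s_0))$ and with its parameter $\varepsilon$ chosen as $\varepsilon^{\ast}:=(\epsilon/3)\exp(-\delta_0 r)$. The lower bound on $T$ in the present statement is exactly the threshold demanded by Theorem \ref{thm:VET} for these data, so one obtains some $u\in[T,2T]$ with $|\zeta^{(k)}(\sigma_0+iu)-g^{(k)}(s_0)|<\varepsilon^{\ast}$ for $0\leq k<N$. Setting $\tau:=u-t_0$ puts $\tau\in[T-t_0,2T-t_0]$, and the middle term is bounded by $\varepsilon^{\ast}\sum_{k=0}^{N-1}(\delta r)^{k}/k!\leq \varepsilon^{\ast}e^{\delta_0 r}=\epsilon/3$, so adding the three pieces yields $\epsilon$.

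The principal subtlety — and the reason the original formulation from \cite{GLMSS2010} needed the author's correction — is the ordering of quantifiers. The integer $N$ must be chosen first, from $(\delta_0,\epsilon,g)$ alone; the shift $\tau$ is then extracted from Theorem \ref{thm:VET}; only after $\tau$ is fixed does $M(\tau)$ acquire a value, and only then can one describe the admissible set of $\delta\leq\delta_0$ via $M(\tau)\,\delta^N/(1-\delta)<\epsilon/3$. A minor additional check, implicit in Theorem \ref{thm:VET}, is that $|g(s_0)|>\varepsilon^{\ast}$; this is the reason $N$ is allowed to depend on $g$, so that $\varepsilon^{\ast}$ can be arranged to lie below the nonzero constant $|g(s_0)|$.
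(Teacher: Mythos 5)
Your decomposition, Cauchy/Taylor estimates, and application of Theorem~\ref{thm:VET} at the parameter $\varepsilon^{\ast}=(\epsilon/3)\exp(-\delta_0 r)$ are exactly the paper's argument for Corollary~\ref{cor:GLMSS} (with Corollary~\ref{cor:CV} standing in for Theorem~\ref{thm:VET} in the Selberg-class version), and you correctly identify the quantifier-ordering correction that is the whole point of the modified statement. Two small points need fixing, though. First, your explanation of why $N$ is allowed to depend on $g$ is wrong: $\varepsilon^{\ast}$ is determined entirely by $\epsilon$, $\delta_0$, $r$ and does not involve $N$, so enlarging $N$ cannot ``arrange $\varepsilon^{\ast}$ to lie below $|g(s_0)|$.'' The dependence of $N$ on $g$ is solely through $M(g)$, which is what must be beaten in the tail bound $M(g)\,\delta_0^{N}/(1-\delta_0)<\epsilon/3$; the side condition $|g(s_0)|>\varepsilon^{\ast}$ is a genuine (and $N$-independent) hypothesis inherited from Theorem~\ref{thm:VET}, which the paper sidesteps for the Selberg-class case by using Corollary~\ref{cor:CV}, whose hypothesis is merely $c_0\neq 0$. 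Second, you should also record, as the paper does, that the alternative lower bound $T\geq r$ in the threshold is what guarantees that the shifted closed disc $\{s:|s-s_0|\leq r\}+i\tau$ stays away from the pole of $\zeta$ at $s=1$ (its imaginary parts exceed $T-r\geq 0$ while the real parts stay to the left of $1$, so $|1-(s_0+i\tau)|>t_1\geq T\geq r$); without this, $M(\tau)$ and the Cauchy estimates on the circle of radius $r$ would not be justified.
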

This correction is inspired by Matsumoto's paper \cite{M2006}.
We will mention this correction in the proof of Corollary \ref{cor:GLMSS}. 
Our goal in this paper is to generalize these results in \cite{V1988} and \cite{GLMSS2010} to the element of the Selberg class $\mathcal{S}$ satisfying some conditions.

To state the main theorem, 
we start to recall the definition of the Selberg class $\mathcal{S}$.
The Dirichlet series 
\[
\mathcal{L}(s) = \sum_{n = 1}^{\infty}\frac{a(n)}{n^s}
\]
is said to belong to the Selberg class $\mathcal{S}$ if $\mathcal{L}(s)$ satisfies the following axioms; 
\begin{enumerate}[(i)]
\item Ramanujan hypothesis: $a(n) \ll_\varepsilon n^\varepsilon$ for any $\varepsilon>0$. 
\item Analytic continuation: there exists a nonnegative integer $m$ such that $(s -1)^m \mathcal{L}(s)$ is an entire function of finite order. 
\item Functional equation: $\mathcal{L}(s)$ satisfies a functional equation of the type 
\[
\mathcal{H}_\mathcal{L}(s)
= \omega \overline{\mathcal{H}_\mathcal{L}(1 - \overline{s})}
\]
where
\[
\mathcal{H}_\mathcal{L}(s) = \mathcal{L}(s) R^s \prod_{j=1}^{f}\Gamma(\lambda_j s + \mu_j) = \gamma (s) \LF(s)
\]
with positive real numbers $R$, $\lambda_j$ and complex numbers $\mu_j$ and $\omega$ with  $\Re \mu_j \geq 0$ and $|\omega| = 1$. 
\item Euler product: $\log \LF (s) = \sum_{n =1}^{\infty} b(n) n^{- s}$, 
where $b(n)= 0$ unless $n = p^m$ with a prime number $p$ and $m \geq 1$, and $b(n) \ll n^{\vartheta}$ for some $\vartheta < 1/2$.
\end{enumerate}

Now we give some definitions. 
The zeros of $\LF(s)$ which are not derived from the poles of the $\gamma$-factor $\gamma(s)$ and are not equal to possible zero of $\LF(s)$ at $s=0, 1$ are called non-trivial zeros, and denote by $\rho = \beta + i \gamma$ such zeros throughout this paper.
Let $N_{\LF}(T)$ denote the number of non-trivial zeros with multiplicity satisfying $0 \leq \beta \leq 1$ and $|\gamma| \leq T$. 
We remark that it is known that
\begin{equation}\label{eqn:GRV}
N_{\LF}(T) = \frac{d_{\LF}}{\pi} T \log T + c_{\LF} T + O(\log T)
\end{equation}
holds, 
where $d_{\LF} = 2 \sum_{j =1}^{f} \lambda_j$ and $c_{\LF}$ is some constant depending on $\LF$.
This $d_{\LF}$ is called the degree of $\LF(s)$ and known to be invariant in the Selberg class $\mathcal{S}$.
For other properties of the Selberg class $\mathcal{S}$, 
we refer to a survey paper \cite{P2005} for example.

In this paper, 
we further assume the following three conditions; 
\begin{itemize}
\item[(C1)] There exists a $\kappa = \kappa (\LF) >0$ such that
\[
\frac{1}{\pi (X)}\sum_{p \leq X}\left| a(p) \right|^2 
\sim \kappa  \quad \textrm{as $X \rightarrow \infty$}. 
\]
\item[(C2)] There exists a $\sigma_{\LF} \geq 1/2$ such that for any fixed $\sigma> \sigma_{\LF}$ 
\[
N_{\LF}( \sigma, T ) 
\ll T^{1 - \Delta_{\LF} (\sigma)}
\]
as $T \rightarrow \infty$ with some positive real number $\Delta_{\LF}(\sigma) > 0$, 
where $N_{\LF}( \sigma, T )$ denote the number of non-trivial zeros of $\LF(s)$ with multiplicity satisfying $\beta \geq \sigma$ and $|\gamma| \leq T$.
The implicit constant may depend on $\sigma$.
\item[(C3)] There exists an $E_{\LF}>0$ such that 
\[
\sum_{X < p \leq X + H}\left| a(p) \right|^2 
\sim \kappa \frac{ H }{\log X} \quad 
\textrm{and} \quad \pi(X + H) - \pi(X) 
\sim \frac{ H }{\log X}
\]
hold for $X \geq H \geq X^{1 - E_{\LF}} (\log X)^D$ with some $D \geq 1$ as $X \rightarrow \infty$.
\end{itemize}

The above implicit constants appearing in the symbol $O(\cdot)$ and $\ll$ may depend on $\LF(s)$. 
Remark that the universality theorem for the element of the Selberg class $\mathcal{S}$ satisfying the condition {\rm (C1)} is proved in a certain strip by Nagoshi and Steuding \cite{NS2010}.

Here, we define a branch of $\log \LF (s)$. 
Let $G(\LF)$ denote
\[
G(\LF) =
\left\{s ~;~ \sigma > 1/2 \right\} \setminus \left\{ \left( \bigcup_{\rho = \beta + i \gamma} \left\{ s = \sigma + i \gamma ~;~ \sigma \leq \beta \right\}  \right) \cup  (- \infty, 1]  \right\}
\]
and we define $\log \LF (s)$ by 
\[
\log \LF (s)
= \int_{\infty}^{\sigma} \frac{\LF'}{\LF} ( \alpha + it) d\alpha
\]
for $ s = \sigma + it \in G(\LF) $.

In this paper, we first show the following two results which are generalizations of the results in \cite{V1988}.

\begin{theorem}\label{thm:MTH}
Let $\mathcal{L}(s) = \sum_{n =1}^\infty a(n) n^{-s}$ be an element of the Selberg class $\mathcal{S}$ satisfying the conditions {\rm (C1)}, {\rm (C2)} and {\rm (C3)}.
Let $\max\{ \sigma_{\LF} , 1 - 2 E_{\LF}  \} < \sigma_0 < 1$, $N \in \mathbb{N}$, $\varepsilon \in (0,1)$, $\underline{c} = (c_0, c_1, \ldots, c_{N -1}) \in \mathbb{C}^N$.
Then a sufficient condition for the system of inequalities 
\[
\left| \frac{d^k}{d s^k} \log \LF (\sigma_0 + it) - c_k \right| < \varepsilon \quad \textrm{for $k = 0, 1, \ldots, N -1$}
\]
to have a solution $t \in [T, 2T]$ is that
\[
T \geq 
\exp \exp \left(C_1 (\LF, \sigma_0, N ) \left( \| \underline{c} \| + \frac{1}{\varepsilon}  \right)^{d(\sigma_0, E_{\LF})} \right),
\]
where $C_1 (\LF, \sigma_0, N )$ and $d(\sigma_0, E_{\LF})$ are effectively computable positive constants depending on $\LF, \sigma_0, N$ and on $\sigma_0, E_{\LF}$ respectively.
\end{theorem}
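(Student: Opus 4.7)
The plan is to adapt Voronin's effective proof of Theorem \ref{thm:VET}, as exposited in \cite{KV1992}, to the Selberg class. Working with $\log \LF(s)$ is slightly more convenient than with $\LF^{(k)}(s)$: by axiom (iv) together with $b(n) \ll n^\vartheta$ ($\vartheta < 1/2$), one has
\[
\frac{d^k}{ds^k}\log \LF(\sigma_0 + it) = \sum_{p}\frac{a(p)(-\log p)^k}{p^{\sigma_0 + it}} + O_k(1),
\]
and the zero-density estimate (C2) lets us truncate the $p$-sum at $X = T^\eta$ for a small $\eta = \eta(\sigma_0) > 0$, with an error of size $\varepsilon/2$ outside a set of measure $o(T)$. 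Thus it suffices to find $t \in [T, 2T]$ such that the Dirichlet polynomials $P_k(t) := \sum_{p \leq X} a(p)(-\log p)^k p^{-\sigma_0 - it}$ are $\varepsilon/2$-close to $c_k$ for every $k = 0, \ldots, N-1$.

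Following Voronin, introduce a second cutoff $Y \ll X$ and split $P_k = P_k^{\leq Y} + P_k^{>Y}$. The argument then proceeds in two geometric steps. \emph{(a)} Produce unimodular phases $\{z_p\}_{p \leq Y}$ with
\[
\sum_{p \leq Y} \frac{a(p) z_p(-\log p)^k}{p^{\sigma_0}} = c_k, \qquad k = 0, \ldots, N-1,
\]
by the linear-system / geometric argument in \cite{KV1992} that replaces Pe\v{c}erski\u{\i}'s rearrangement theorem. Its hypothesis is a lower bound on $\sum_{p \leq Y}|a(p)|^2(\log p)^{2k} p^{-2\sigma_0}$, which is supplied by (C1) via partial summation and forces $\log Y \gg (\|\underline{c}\| + 1/\varepsilon)^{c(N,\sigma_0)}$. \emph{(b)} Find $t \in [T,2T]$ with $p^{-it}$ close to $z_p$ for each $p \leq Y$ by Voronin's amplification: evaluate a weighted second moment of $\prod_{p \leq Y}(1 + \lambda_p\,\Re(z_p^{-1}p^{-it}))$ against the indicator of the event $|P_k^{>Y}(t)| < \varepsilon/4$, and use the short-interval asymptotic with weight $|a(p)|^2$ in (C3) to show this moment is positive once $\log T$ dominates an appropriate power of $Y$. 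Combining (a) and (b) yields the desired $t$.

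The principal obstacle is the bookkeeping that produces the effective exponent $d(\sigma_0, E_{\LF})$. In Voronin's original argument the exponent $\frac{8}{1-\sigma_0}+\frac{8}{\sigma_0-1/2}$ arises as the cost of converting the lower bound on $Y$ in step (a) into a lower bound on $T$ in step (b): the short-interval prime-number theorem is valid only on intervals of length $\geq X^{1-E_{\LF}}(\log X)^D$, which bounds from above the length of Dirichlet-polynomial moments one can evaluate diagonally, and this in turn dictates how large $T$ must be as a function of $Y$. Tracking this through the argument with the condition $\sigma_0 > \max\{\sigma_{\LF}, 1-2E_{\LF}\}$ in force yields the exponent $d(\sigma_0, E_{\LF})$. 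Apart from this bookkeeping, the adaptation is essentially a coefficient-by-coefficient substitution $1 \leadsto a(p)$ in the classical calculations, with (C1) and (C3) stepping in wherever the usual prime number theorem (resp.\ its short-interval form) was invoked.
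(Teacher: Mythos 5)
Your proposal captures the right overall strategy --- Voronin's effective method, with a geometric/linear-algebra step replacing Pe\v{c}erski\u{\i} and a second-moment (mollified) amplification replacing Kronecker --- and this is essentially how the paper proceeds. There are, however, two bookkeeping errors that would derail the argument if followed literally. First, you place (C3) in step (b), but in the paper it belongs to the \emph{phase-construction} step (a): the Vandermonde device requires decomposing the primes in $(Y, 2^N Y]$ into $N$ \emph{short} blocks $\mathcal{M}_j^{(Y,H)} = \{p : 2^j Y < p \leq 2^j Y + H\}$ with $H = Y^A$, $A<1$, so that the substitution $(-\log p)^k \rightsquigarrow (-\log(2^j Y))^k$ carries a controllably small error $R_{j,k}$, and it is precisely (C3) that yields the positive proportion of primes $p \in \mathcal{M}_j^{(Y,H)}$ with $|a(p)| > \mu$ needed to lower-bound $\sum_{p \in \mathcal{M}_j^{(Y,H)}}|a(p)|/p^{\sigma_0}$ and invoke the elementary geometric lemma. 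The condition (C1) alone cannot see intervals of length $Y^A \ll Y$, so your step (a) as written has a gap. The amplification step in the paper uses (C2) (through the explicit formula for $\log\LF$ and the removal of zero-neighborhoods $P^{(h)}_\rho$), not (C3). Second, the cascade of exponentials is not $\log Y \gg (\|\underline{c}\|+1/\varepsilon)^{c}$ followed by $\log T \gg Y^{c'}$; rather the cutoff $Q$ is \emph{polynomial} in $\|\underline{c}\|+1/\varepsilon$, and the requirement $T \geq \exp\exp(CQ)$ arises because the truncated Fourier expansion of the mollifier $\Phi_Q$ produces off-diagonal contributions of size $\exp(C_2 QM)$ with the truncation height $M = \exp(2C_1 Q)$, i.e.\ already doubly exponential in $Q$. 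The final form of the bound agrees with yours, but the intermediate mechanism is different and should be corrected.
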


\begin{corollary}\label{cor:CV}
Under the same hypothesis of Theorem \ref{thm:MTH} with $c_0 \neq 0$,
a sufficient condition for the system of inequalities
\[
\left| \frac{d^k}{d s^k} \LF (\sigma_0 + it) - c_k \right| < \varepsilon \quad \textrm{for $k = 0, 1, \ldots, N -1$}
\]
to have a solution $t \in [T,2T]$ is that
\[
T 
\geq \exp \exp \left( C_2 ( \LF, \sigma_0, N ) B(N, \underline{c}, \epsilon) ^{d(\sigma_0, E_{\LF})} \right),
\]
where $ C_2 ( \LF, \sigma_0, N ) $ is effectively computable positive constant depending on $\LF, \sigma_0, N$, 
and $d(\sigma_0, E_{\LF})$ is the same constant as in Theorem \ref{thm:MTH}, and
\[
B(N, \underline{c}, \epsilon)
= | \log c_0 | 
+ \left( \frac{ \| \underline{c} \| }{ | c_0 | } \right)^{(N-1)^2} \frac{1 + | c_0 |}{ \epsilon}.
\]
Here the above branch of $\log c_0$ can be taken arbitrarily.
\end{corollary}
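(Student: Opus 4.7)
The plan is to reduce Corollary~\ref{cor:CV} to Theorem~\ref{thm:MTH} applied to $\log\LF$, by first translating the targets $(c_0,c_1,\ldots,c_{N-1})$ into corresponding targets $(\tilde c_0,\tilde c_1,\ldots,\tilde c_{N-1})$ for the successive derivatives of $\log\LF$ at $\sigma_0+it$, and then propagating the resulting errors back. Fix any branch $\tilde c_0 := \log c_0$. For $k\ge 1$ use the Fa\`a di Bruno identity $(e^h)^{(k)} = e^h\,B_k(h',\ldots,h^{(k)})$ with $B_k$ the exponential Bell polynomial; since $B_k(x_1,\ldots,x_k)=x_k+R_k(x_1,\ldots,x_{k-1})$ for a universal polynomial $R_k$ with nonnegative integer coefficients, the equations $c_k=c_0\,B_k(\tilde c_1,\ldots,\tilde c_k)$ determine $\tilde c_1,\ldots,\tilde c_{N-1}$ recursively. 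A straightforward induction shows $|\tilde c_k|\le A_k(\|\underline{c}\|/|c_0|)^k$ for $1\le k<N$ with constants $A_k$ depending only on $k$, hence
\[
\|\underline{\tilde c}\| \le |\log c_0| + A'_N\bigl(\|\underline{c}\|/|c_0|\bigr)^{N-1}.
\]

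Set $M:=1+A'_N(\|\underline{c}\|/|c_0|)^{N-1}$ and choose $\tilde\epsilon:=c(N)\,\epsilon/\bigl((1+|c_0|)M^{N-1}\bigr)$ with $c(N)>0$ small, to be fixed below. Assume $t\in[T,2T]$ satisfies $|(\log\LF)^{(k)}(\sigma_0+it)-\tilde c_k|<\tilde\epsilon$ for every $0\le k<N$. Then $|\LF(\sigma_0+it)-c_0|\le 2|c_0|\tilde\epsilon$, and writing $\LF^{(k)}=\LF\cdot B_k((\log\LF)',\ldots,(\log\LF)^{(k)})$ and comparing with $c_k=c_0\,B_k(\tilde c_1,\ldots,\tilde c_k)$, the Lipschitz and size estimates on $B_k$ on the complex ball of radius $M$ (Lipschitz constant $\ll_N M^{k-1}$, maximum modulus $\ll_N M^k$) yield
\[
|\LF^{(k)}(\sigma_0+it)-c_k| \ll_N (1+|c_0|)\,M^k\,\tilde\epsilon < \epsilon
\]
for every $0\le k<N$, provided $c(N)$ was chosen small enough.

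Apply Theorem~\ref{thm:MTH} to the vector $\underline{\tilde c}$ with tolerance $\tilde\epsilon$: it produces an admissible $t\in[T,2T]$ once
\[
T \geq \exp\exp\!\Bigl(C_1(\LF,\sigma_0,N)(\|\underline{\tilde c}\|+1/\tilde\epsilon)^{d(\sigma_0,E_{\LF})}\Bigr).
\]
Using the two bounds above,
\[
\|\underline{\tilde c}\|+\frac{1}{\tilde\epsilon}
\ll_N |\log c_0| + \Bigl(\frac{\|\underline{c}\|}{|c_0|}\Bigr)^{N-1} + \Bigl(\frac{\|\underline{c}\|}{|c_0|}\Bigr)^{(N-1)^2}\frac{1+|c_0|}{\epsilon}
\ll_N B(N,\underline{c},\epsilon),
\]
because $\epsilon\in(0,1)$ forces $(1+|c_0|)/\epsilon\geq 1$, so the middle term is absorbed into the third. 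The resulting multiplicative constant is absorbed into $C_2(\LF,\sigma_0,N)$ (up to the fixed exponent $d(\sigma_0,E_{\LF})$), giving the stated bound.

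The main obstacle is the error-propagation step: the Lipschitz constant of $B_k$ on a ball of radius $M$ grows like $M^{k-1}$, and combined with $M\asymp(\|\underline{c}\|/|c_0|)^{N-1}$ coming from inverting $c_k\mapsto\tilde c_k$, this compounds to produce the characteristic exponent $(N-1)^2$ appearing in $B(N,\underline{c},\epsilon)$. The slightly generous, uniform-in-$k$ choice of $\tilde\epsilon$ is what forces the final form of the bound.
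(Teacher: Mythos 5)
Your proof is correct and follows the same two--step plan as the paper: (a) invert the exponential relation on the first $N$ Taylor data, turning the given targets $c_k$ for $\LF^{(k)}(\sigma_0+it)$ into targets $\tilde c_k$ for $(\log\LF)^{(k)}(\sigma_0+it)$, and (b) propagate the tolerance forward through the Bell--polynomial maps and feed the resulting pair $(\|\underline{\tilde c}\|,\tilde\epsilon)$ into Theorem~\ref{thm:MTH}. The exponent $(N-1)^2$ appears for the same reason in both arguments: the inversion in (a) inflates the scale from $u:=\|\underline c\|/|c_0|$ to roughly $u^{N-1}$, and the sensitivity of the degree-$(N-1)$ polynomial map in (b) over that enlarged range costs another $(N-1)$-th power. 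What differs is the technical framework. The paper packages both estimates into Lemma~\ref{lem:BE1}, proved by a formal-power-series domination relation $\unlhd$ (Lemma~\ref{lem:BE}) followed by evaluations of dominating series at well-chosen $X$. You instead solve the triangular system $c_k=c_0\,B_k(\tilde c_1,\dots,\tilde c_k)$ by hand, extract $|\tilde c_k|\ll_k u^k$ from the weighted homogeneity of $B_k$ (valid since $u\ge 1$), and bound the Lipschitz constant of $B_k$ on a ball of radius $M\asymp u^{N-1}$ crudely by $M^{k-1}$ using $\deg B_k=k$. Both routes give the same $B(N,\underline c,\epsilon)$ up to constants; the paper's domination calculus is the slicker, reusable abstraction, whereas your version is more elementary and self-contained, at the modest cost of leaving the inductive bound and the Lipschitz estimate as short verifications for the reader.
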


By combining Corollary \ref{cor:CV} with the method as in \cite{GLMSS2010}, 
we have the following corollary.

\begin{corollary}\label{cor:GLMSS}
Let $\mathcal{L}(s) = \sum_{n =1}^\infty a(n) n^{-s}$ be an element of the Selberg class $\mathcal{S}$ satisfying the conditions {\rm (C1)}, {\rm (C2)} and {\rm (C3)}.
Let $s_0 = \sigma_0 + i t_0$, $\max\{ \sigma_{\LF} , 1 - 2 E_{\LF}  \} < \sigma_0 < 1$, $r >0$, $\mathcal{K} = \{ s \in \mathbb{C} ~;~ | s - s_0 | \leq r \}$, and suppose that $g:\mathcal{K} \rightarrow \mathbb{C}$ is an analytic function with $g(s_0) \neq 0$. 
Put $M(g) =\max_{ | s - s_0 | = r } | g (s) |$.
Fix $\epsilon \in ( 0, 1)$ and $0 < \delta_0 <1$.
If $N = N ( \delta_0, \epsilon, M(g))$ and $T = T(\LF, g, \epsilon, \sigma_0, \delta_0, N)$ satisfy
\[
M(g) \frac{\delta_0^N}{1 - \delta_0} 
< \frac{\epsilon}{3}
\]
and
\[
T 
\geq \max\left\{ \exp\exp \left( C_2(\LF, \sigma_0, N) B \left( N, \mathbf{g}, (\epsilon/3)\exp( - \delta_0 r ) \right)^{d(\sigma_0, E_{\LF})} \right) , r \right\}, 
\]
respectively, 
then there exists $\tau \in [T -  t_0, 2T - t_0 ]$ such that
\[
\max_{| s - s_0 | \leq \delta r} \left| \LF ( s + i \tau) - g (s) \right| < \epsilon
\]
for any $0 \leq \delta \leq \delta_0$ satisfying
\[
M(\tau;\LF) \frac{\delta^N}{1 - \delta}
< \frac{\epsilon}{3}.
\]
Here $ C_2 ( \LF, \sigma_0, N ) $ and $B \left( N, \mathbf{g}, (\epsilon/3)\exp( - \delta_0 r ) \right)$ are the same constants as in Corollary \ref{cor:CV} with $\mathbf{g}
= \left( g(s_0), g'(s_0), \ldots, g^{(N-1)} (s_0) \right)$, 
and $M(\tau;\LF)$ is defined by $M(\tau;\LF) = \max_{ | s - s_0 | = r } | \LF( s + i \tau ) |$.
\end{corollary}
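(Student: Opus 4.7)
The plan is to follow Matsumoto's idea of reducing the universality-type approximation on a disk to a denseness statement for the Taylor coefficients at the center, with Corollary \ref{cor:CV} supplying the quantitative denseness. Writing $s_0 = \sigma_0 + it_0$ and expanding both $g(s)$ and $\LF(s + i\tau)$ in Taylor series about $s_0$, I would split each series at index $N$ to obtain, for $|s-s_0|\le \delta r$,
\[
\LF(s+i\tau)-g(s) = \sum_{k=0}^{N-1}\frac{\LF^{(k)}(s_0+i\tau)-g^{(k)}(s_0)}{k!}(s-s_0)^k + (\LF\text{-tail}) - (g\text{-tail}),
\]
and then arrange for each of the three contributions to be less than $\epsilon/3$.

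For the tails, Cauchy's estimates on the circle $|s-s_0|=r$ give $|g^{(k)}(s_0)|/k! \le M(g)/r^k$ and $|\LF^{(k)}(s_0+i\tau)|/k! \le M(\tau;\LF)/r^k$, so on the closed disk of radius $\delta r$ the $g$-tail is at most $M(g)\,\delta^N/(1-\delta)$ and the $\LF$-tail is at most $M(\tau;\LF)\,\delta^N/(1-\delta)$. Since $\delta \mapsto \delta^N/(1-\delta)$ is increasing on $(0,1)$, the hypothesis $M(g)\,\delta_0^N/(1-\delta_0) < \epsilon/3$ controls the $g$-tail uniformly for all $\delta \le \delta_0$, while the user-imposed inequality $M(\tau;\LF)\,\delta^N/(1-\delta) < \epsilon/3$ handles the $\LF$-tail at the specific $\delta$ chosen.

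For the head, I apply Corollary \ref{cor:CV} at $\sigma_0$ with target vector $\underline{c} = \mathbf{g}$ (legitimate because $g(s_0)\ne 0$ supplies $c_0 \ne 0$) and accuracy $\varepsilon_1 := (\epsilon/3)\exp(-\delta_0 r)$. This produces some $t \in [T, 2T]$ satisfying
\[
\left| \LF^{(k)}(\sigma_0 + it) - g^{(k)}(s_0) \right| < \varepsilon_1, \qquad k = 0, \ldots, N-1,
\]
provided $T$ exceeds $\exp\exp\bigl(C_2(\LF,\sigma_0,N)\, B(N,\mathbf{g},\varepsilon_1)^{d(\sigma_0, E_{\LF})}\bigr)$, which is exactly the hypothesis on $T$ once one reads off that $B(N,\mathbf{g},\varepsilon_1)$ is the quantity appearing there. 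Setting $\tau := t - t_0 \in [T-t_0, 2T-t_0]$, the head is majorized by $\varepsilon_1\sum_{k=0}^{N-1}(\delta r)^k/k! \le \varepsilon_1\, e^{\delta_0 r} = \epsilon/3$, and adding the three bounds closes the argument.

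The point requiring care—and the source of the slight correction to the formulation in \cite{GLMSS2010}—is the differing roles of $\delta_0$ and $\delta$: the $g$-tail must be bounded using $\delta_0$ since $N$ is fixed before $\delta$ is chosen, whereas the $\LF$-tail depends on $\tau$ through $M(\tau;\LF)$ and so must be reimposed at the specific $\delta$ the user picks. Apart from this bookkeeping and the routine verification that the auxiliary condition $T \ge r$ keeps the shifted disk within the analyticity range of $\LF$, there is no serious analytic obstacle; the double-exponential size of $T$ is inherited without loss from Corollary \ref{cor:CV} applied to the Taylor data $\mathbf{g}$.
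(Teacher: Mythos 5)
Your proposal is correct and follows essentially the same route as the paper: the three-way split into $g$-tail, head, and $\LF$-tail is exactly the paper's decomposition into $\Sigma_1, \Sigma_2, \Sigma_3$, with the same Cauchy estimates, the same accuracy $(\epsilon/3)\exp(-\delta_0 r)$ fed to Corollary \ref{cor:CV}, the same bound $\sum_{k<N}(\delta_0 r)^k/k! < e^{\delta_0 r}$, and the same observation that $T\geq r$ keeps the shifted disk clear of the pole.
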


At the end of this section, we give some examples.
First, we will see that one can apply these results to the Riemann zeta-function in the range $1/2 < \sigma_0 <1$.
In this case, the condition (C1) is well-known as the prime number theorem.
For the condition (C2), we can take $\sigma_{\zeta} = 1/2$ by the zero-density theorem (see e.g. \cite[Theorem 9.19]{T1986}).
For the condition (C3), it is known that
\[
\pi(X + H) - \pi(X)
\sim \frac{H}{\log X} 
\FOR X^{7/12} (\log X)^{22} \leq H \leq X
\]
holds (see e.g. \cite[Theorem 12.8]{I1985}). 
Hence we can take $E_{\zeta} = 5/12$ and we obtain $\max\{\sigma_{\zeta}, 1 - 2 E_\zeta\} = 1/2$ in this case.
For other examples, 
we can confirm that the Dirichlet $L$-function $L(s, \chi)$ of primitive characters $\chi$ and the Dedekind zeta-function $\zeta_K(s)$ belong to the Selberg class $\mathcal{S}$ and satisfy the conditions (C1), (C2) and (C3).

\section{Preliminaries}
In this section, 
let $\mathcal{L}(s)$ be an element of the Selberg class $\mathcal{S}$ which is represented by
\[
\mathcal{L}(s) = \sum_{n = 1}^{\infty} \frac{a(n)}{ n^{s}} = \prod_p \exp\left( \sum_{k =1}^{\infty}\frac{b(p^k)}{p^{ks}} \right) \quad \textrm{for} \quad \sigma >1.
\] 

\subsection{Definitions and Notations}
We use the following definitions and notations.
\begin{itemize}
\item Let $\mathbb{N}_0$ denote the set of nonnegative integers.
\item Let $\mathcal{P}$ denote the set of all prime numbers.
\item For a set $A$, let $\mathbb{R}^A$ denote the family of elements in $\mathbb{R}$ indexed by $A$. 
\item For any $Q>0$, 
let $\mathcal{P}(Q)$ denote the set of prime numbers smaller than or equal to $Q$.
\item We define the generalized von Mangoldt function $\Lambda_{\mathcal{L}}(n)$ by
\[
- \frac{\LF'}{\LF}(s) = \sum_{n=1}^\infty \Lambda_{\LF}(n)n^{-s}
\]
for $\sigma >1$, that is, $\Lambda_{\LF}(n)= b(n) \log n.$
\item Let $\mathcal{M}$ be a finite subset of $\mathcal{P}$ and $s = \sigma + it$ be a complex number with $\sigma> 1/2$. 
For any $\mathcal{M} \subset \mathcal{N} \subset \mathcal{P}$ and $\underline{\theta} = ( \theta_p )_{p \in \mathcal{N}} \in \mathbb{R}^{\mathcal{N}}$, we denote  $\phi_{\mathcal{M}} (s, \underline{\theta})$ and $\log \LF_{\mathcal{M}} (s, \underline{\theta})$ by 
\[
\phi_{\mathcal{M}} (s, \underline{\theta})
 = \sum_{p \in \mathcal{M}}\frac{b(p) \exp (- 2 \pi i \theta_p)}{p^s}
\]
and
\[
\log \LF_{\mathcal{M}} (s, \underline{\theta})
= \sum_{p \in \mathcal{M}} \sum_{l =1}^{\infty} \frac{b(p^l) \exp( - 2 \pi i l \theta_p ) }{p^{l s}}.
\]
Note that the series $\log \LF_{\mathcal{M}} (s, \underline{\theta})$ converges absolutely by the estimate $b(p^l) \ll p^{l \vartheta}$ with some $\vartheta < 1/2$ coming from the axiom (iv) of the Selberg class.
If $\mathcal{M}=\{ p \}$, 
we abbreviate $\phi_{\{p\}} (s, \underline{\theta})$ and $\log \LF_{\{p\}} (s, \underline{\theta})$ to $\phi_{p} (s, \underline{\theta})$ and $\log \LF_{p} (s, \underline{\theta})$ respectively, 
and if $\underline{\theta} = (0)_{p \in \mathcal{N}}$, we do $\log \LF_{\mathcal{M}} (s, (0)_{p \in \mathcal{N} })$ to $\log \LF_{\mathcal{M}} (s)$.
\end{itemize}
\subsection{Some known results}
In this subsection, 
we summarize the results not coming from analytic number theory. 
The followings are used in \cite{KV1992} and \cite{V1988}.

\subsubsection{A certain estimate coming from the Vandermonde matrix}
\begin{lemma}\label{lem:Van}
Let $X > e$, $N \in \mathbb{N}$ and $\underline{a} = (a_0, a_1, \ldots , a_{N -1}) \in \mathbb{C}^N$.
Put $X_j = 2^{j} X$ for $j = 0, 1, \ldots, N -1$.
Then the system of linear equations in the unknown $\underline{z} = (z_0, z_1, \ldots , z_{N -1}) \in \mathbb{C}^N$;
\[
\begin{pmatrix}
1 & 1 & \cdots & 1 \\
- \log X_0 & - \log X_1 & \ldots &  - \log X_{N-1} \\
 \left(- \log X_0\right)^2 & \left( - \log  X_1\right)^2 & \ldots & \left(- \log X_{N-1}\right)^2 \\
\vdots & \vdots & \ddots & \vdots \\
\left(-\log X_0 \right)^{N -1} & \left(- \log X_1\right)^{N -1} & \cdots & \left(- \log X_{N-1} \right)^{N-1} 
\end{pmatrix}
\begin{pmatrix}
z_0  \\
z_1  \\
z_2  \\
\vdots  \\
z_{N-1}
\end{pmatrix}
= 
\begin{pmatrix}
a_0  \\
a_1  \\
a_2  \\
\vdots  \\
a_{N-1}
\end{pmatrix}
\]
has a unique solution $\underline{z}_0 = \underline{z}_0(X,  \underline{a} ) \in \mathbb{C}^{N}$ and the estimate
\[
\|\underline{z}_0\| 
\ll_N ( \log X )^{N -1} \| \underline{a} \|
\]
holds.
\end{lemma}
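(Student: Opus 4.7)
The coefficient matrix here is a Vandermonde matrix with nodes $y_j := -\log X_j = -\log X - j\log 2$ for $j=0,1,\ldots,N-1$. My first move would be to notice that the $y_j$ are pairwise distinct (their differences are the nonzero numbers $(i-j)\log 2$), so the determinant is $\prod_{0\le i<j\le N-1}(y_j-y_i)=(\log 2)^{N(N-1)/2}\prod_{i<j}(i-j)$, which is a nonzero constant depending only on $N$. This immediately gives existence and uniqueness of $\underline z_0$, and it also provides a clean lower bound for the determinant that is independent of $X$; the latter will be crucial, because it forces all the $X$-dependence of the solution to come from the numerators in Cramer's rule.

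For the bound $\|\underline z_0\|\ll_N(\log X)^{N-1}\|\underline a\|$, the most transparent route is via the Lagrange-interpolation formula for the inverse of a Vandermonde matrix. Set $L_j(y)=\prod_{i\ne j}(y-y_i)/(y_j-y_i)$. Multiplying the $k$-th equation of the system by the coefficient of $y^k$ in $L_j$ and summing over $k$ isolates $z_j$, giving the explicit formula
\[
z_j=\sum_{k=0}^{N-1}\lambda_{j,k}\,a_k,\qquad \lambda_{j,k}=[y^k]L_j(y).
\]
Thus bounding $\|\underline z_0\|$ reduces to estimating the coefficients $\lambda_{j,k}$.

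Next, I would bound each $\lambda_{j,k}$ by splitting numerator and denominator. The denominator $\prod_{i\ne j}(y_j-y_i)$ equals $(\log 2)^{N-1}\prod_{i\ne j}(j-i)$ in absolute value, which is bounded below by a positive constant depending only on $N$. The numerator is, up to sign, an elementary symmetric polynomial of degree $N-1-k$ in the $\{y_i:i\ne j\}$; since $X>e$ gives $\log X\ge 1$, each $|y_i|=\log X+i\log 2\ll_N\log X$, so this symmetric polynomial is $\ll_N(\log X)^{N-1-k}\le(\log X)^{N-1}$. Therefore $|\lambda_{j,k}|\ll_N(\log X)^{N-1}$, and summing yields $|z_j|\ll_N(\log X)^{N-1}\|\underline a\|$, which gives the claimed estimate.

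This lemma is essentially a bookkeeping exercise, so I do not anticipate a serious obstacle; the only mildly delicate point is to make the bounds depend only on $N$ (and not on $X$), which is handled by isolating the $\log 2$ factors in the denominator and by using $X>e$ to upper-bound every $|y_i|$ by a uniform multiple of $\log X$. Alternatively, one could substitute $y_j=-\log X+j(-\log 2)$ and factor the Vandermonde matrix as a product of a diagonal matrix in $\log X$ and a fixed unimodular combinatorial matrix depending only on $N$; this rescaling argument yields the same $(\log X)^{N-1}$ factor more conceptually, but the Lagrange-interpolation route is more elementary and sufficient for the paper's needs.
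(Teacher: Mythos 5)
Your proof is correct and follows the same basic route as the paper: solve via the inverse of the Vandermonde matrix, observe that the determinant (and each denominator $\prod_{i\ne j}(y_j-y_i)$) is a nonzero constant depending only on $N$ because the nodes are in arithmetic progression with common difference $-\log 2$, and bound the numerators by $(\log X)^{N-1}$. The packaging differs slightly: the paper works with the adjugate matrix, factors each cofactor as $\widetilde b_{i,j}=f_{i,j}\Delta_j$ with $\Delta_j$ cancelling against part of the determinant, and bounds $f_{i,j}$ by a somewhat abstract degree count $\deg f_{i,j}=N-1-i$; you instead write the inverse entries directly as the coefficients $\lambda_{j,k}$ of the Lagrange basis polynomials and recognize the numerator as an elementary symmetric polynomial of degree $N-1-k$ in the $\{y_i:i\ne j\}$, which you bound by $(\log X)^{N-1-k}$ using $\log X\ge 1$. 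This is the same computation, but your identification of the numerator as a symmetric polynomial makes the degree bound transparent rather than requiring the degree bookkeeping on the cofactors. Both routes use the key structural fact that all $X$-dependence is confined to the numerators while the denominators are uniformly bounded away from zero.
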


This lemma is used in \cite{KV1992} and \cite{V1988}, 
however the proof was written very roughly in these papers.
For this reason, we will give a proof in this paper.
\begin{proof}
Fix $ \underline{a} = (a_0, a_1, \ldots , a_{N -1}) \in \mathbb{C}^N$.
Let $\mathbf{U} = (U_0, U_1, \ldots, U_{N -1})$ be indeterminate and $\underline{z} = (z_0, z_1, \ldots , z_{N -1})$ variable in $\mathbb{C}(\mathbf{U})^N$, 
where $\mathbb{C}(\mathbf{U})$ denotes the field of rational functions.
We first consider the following system of linear equations;
\begin{equation}\label{eqn:Van}
\begin{pmatrix}
1 & 1 & \cdots & 1 \\
U_0 & U_1 & \ldots &  U_{N-1} \\
 U_0^2 & U_1^2 & \ldots & U_{N-1} ^2 \\
\vdots & \vdots & \ddots & \vdots \\
U_0^{N -1} & U_1^{N -1} & \cdots & U_{N-1}^{N-1} 
\end{pmatrix}
\begin{pmatrix}
z_0  \\
z_1  \\
z_2  \\
\vdots  \\
z_{N-1}
\end{pmatrix}
= 
\begin{pmatrix}
a_0  \\
a_1  \\
a_2  \\
\vdots  \\
a_{N-1}
\end{pmatrix}.
\end{equation}
Let $B(\mathbf{U})$ denote the first matrix in the left hand side of \eqref{eqn:Van}.
Since $B(\mathbf{U})$ is the Vandermonde matrix, we have
\begin{equation}\label{eqn:Vande}
\det \left( B(\mathbf{U})\right)
= \prod_{0 \leq l < k \leq N -1} (U_k - U_l) \neq 0 \quad \textrm{in $\mathbb{C}[\mathbf{U}]$} .
\end{equation}
Hence the system of linear equations \eqref{eqn:Van} has a solution $\underline{z} = \underline{z} (\mathbf{U}, \underline{a}) \in \mathbb{C}(\mathbf{U})^N$ such that
\[
{}^t\underline{z} = \frac{1}{\det \left( B(\mathbf{U})\right)} \widetilde{B}(\mathbf{U}) \cdot {}^t \underline{a}, 
\]
where $\widetilde{B}(\mathbf{U}) = {}^t( \widetilde{b}_{i,j} (\mathbf{U}) )_{0 \leq i, j \leq N -1}$ denotes the adjugate matrix of $B(\mathbf{U})$.
Here the symbol ${}^t D$ stands for the transposed matrix of a matrix $D$.
We fix $0 \leq i, j \leq N -1$ and put
\[
\Delta_{j} ( \mathbf{U} )
= \prod_{\substack{0 \leq l < k \leq N -1; \\ l, k \neq j}} (U_k - U_l).
\]
Then we find that $\Delta_{j} ( \mathbf{U} )$ divides $\widetilde{b}_{i,j} (\mathbf{U})$, 
and hence there exists $f_{i,j} ( \mathbf{U} ) \in \mathbb{C}[\mathbf{U}]$ such that $\widetilde{b}_{i,j} (\mathbf{U}) = f_{i, j} ( \mathbf{U} )  \Delta_{j} ( \mathbf{U} )$.
By the definition of the determinant, we have
\[
\deg \left( \widetilde{b}_{i,j} (\mathbf{U}) \right)
= \sum_{\beta = 0}^{N -1} \beta - i.
\]
Here the statement $\deg(g (\mathbf{U})) = n$ means that
\[
\max\left\{ i_1 + \cdots + i_{N -1} ~;~ c_{i_1, \ldots, i_{N -1}} \neq 0\right\} = n
\]
for $g(\mathbf{U}) = \sum_{i_0 \ldots, i_{N -1}} c_{i_0, \ldots, i_{N-1}} U_0^{i_0}\cdots U_{N -1}^{i_{N -1}} \in \mathbb{C}[\mathbf{U}] $.
On the other hand, we find that
\begin{align*}
\deg \left( \Delta_{j} (\mathbf{U}) \right) 
&= \#\left\{ (l, k) \in \{0, 1, \ldots, N -1\} ~;~ l < k, l,k \neq j\right\} \\
&= \sum_{\beta = 0}^{N -1} \beta - (N -1).
\end{align*}
Therefore we have
\begin{align*}
\deg \left( f_{i, j} (\mathbf{U}) \right)
&= \deg \left( \widetilde{b}_{i,j} (\mathbf{U}) \right) - \deg \left( \Delta_{j} (\mathbf{U}) \right) \\
&= \left( \sum_{\beta = 0}^{N -1} \beta - i \right) - \left( \sum_{\beta = 0}^{N -1} \beta - (N -1) \right) \\
&= N -1 - i 
\leq N -1.
\end{align*}
By substituting $\mathbf{U}$ into $\mathbf{X} = ( - \log X_0, - \log X_1, \ldots, - \log X_{N -1} )$ and letting $\underline{z}_0 = \underline{z}_0 (X, \underline{a})$ be a solution of the system of linear equations \eqref{eqn:Van} in this case, 
the estimates
\[
\left| \widetilde{b}_{i,j} (\mathbf{X}) \right|
= \left| \Delta_{j} (\mathbf{X}) \right| \left|  f_{i, j} (\mathbf{X}) \right|
\ll_N ( \log X )^{N -1} 
\quad \textrm{and} \quad 
\det \left( B(\mathbf{X})\right) \asymp_{N} 1
\]
hold by the equation \eqref{eqn:Vande} and the definition of $\Delta_{j} (\mathbf{X})$.
Consequently, we obtain
\[
\|  \underline{z}_0 \|
\ll_N \left( \sum_{0 \leq i, j \leq N -1} \left| \widetilde{b}_{i,j} (\mathbf{X}) \right| \right) \| \underline{a} \|
\ll_N ( \log X )^{N -1} \| \underline{a} \|.
\]
This completes the proof.
\end{proof}

\subsubsection{Elementary geometric lemma}
\begin{lemma}\label{lem:EG}
Let $N$ be a positive integer larger than two.
Suppose that the positive numbers $r_1 \leq r_2 \leq  \cdots \leq r_N$ satisfy $r_N \leq \sum_{n =1}^{N - 1} r_n$.
Then we have
\begin{align*}	
\left\{ \sum_{n =1}^{N}  r_n \exp ( - 2 \pi i \theta_n ) \in \mathbb{C} ~;~ \theta_n \in [0,1) \right\}
=\left\{ z \in \mathbb{C} ~;~ | z | \leq \sum_{n =1}^N r_n \right\}.
\end{align*}
\end{lemma}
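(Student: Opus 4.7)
The inclusion $\subseteq$ is immediate from the triangle inequality: every member of the LHS has modulus at most $R := \sum_{n=1}^{N} r_n$. For the reverse inclusion, my plan is to exploit the structure of the LHS via two elementary observations. First, translating all phases by a common constant rotates the full sum by a unimodular factor, so the LHS is rotation-invariant about the origin; it therefore has the form $\{z \in \mathbb{C} : |z| \in A\}$ for some set $A \subseteq [0, R]$. Second, the magnitude $F(\underline{\theta}) = |\sum_{n=1}^N r_n \exp(-2\pi i \theta_n)|$ depends continuously on $\underline\theta \in \mathbb{R}^N$, and $A$ is its image; by connectedness, $A$ is an interval in $[0, R]$.

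It therefore suffices to show that $A$ contains both endpoints $0$ and $R$. The right endpoint is trivial: setting all $\theta_n = 0$ gives $F = R$. For the left endpoint, I need to produce angles with $\sum r_n \exp(-2\pi i \theta_n) = 0$, i.e.\ to close a polygon with prescribed edge lengths $r_1 \leq \cdots \leq r_N$ under the standing hypothesis $r_N \leq \sum_{n<N} r_n$.

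I would prove this polygon-closing fact by induction on $N$. For the base case $N = 3$, the assumptions $r_1 \leq r_2 \leq r_3$ and $r_3 \leq r_1 + r_2$ imply all three usual triangle inequalities, so the law of cosines produces a valid interior angle and a (possibly degenerate) closing triangle exists. For the inductive step $N \geq 4$, I would merge the two smallest radii into a single edge of length $r_1 + r_2$, verify that the reduced collection of $N - 1$ positive numbers still has its maximum bounded by the sum of the others (this holds whether the new maximum is $r_N$ or $r_1 + r_2$; in the latter case one uses $r_1 + r_2 \leq 2r_2 \leq r_3 + r_N \leq r_3 + \cdots + r_N$), apply the inductive hypothesis to obtain angles $\tilde\theta_1, \tilde\theta_3, \ldots, \tilde\theta_N$ closing the smaller polygon, and then un-merge by setting $\theta_1 = \theta_2 := \tilde\theta_1$, which preserves the total sum because $r_1 \exp(-2\pi i \tilde\theta_1) + r_2 \exp(-2\pi i \tilde\theta_1) = (r_1 + r_2) \exp(-2\pi i \tilde\theta_1)$.

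The main obstacle is the polygon-closing step; the topological framework in the first two paragraphs is essentially automatic. Once $0 \in A$ is established, the connectedness of $A$ forces $A = [0, R]$, and combined with the rotation invariance this identifies the LHS with the closed disk of radius $R$, as required.
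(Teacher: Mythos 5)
Your proposal is correct. The paper itself does not present an argument for this lemma; its ``proof'' is just a pointer to \cite{CG2014} (rough sketch) and \cite{T2013} (detailed treatment). Your argument is a correct, self-contained substitute along the expected lines: the triangle inequality gives one inclusion; rotation invariance (multiply all $\theta_n$ by a common shift) together with the connectedness of the continuous image of $[0,1]^N$ under $\underline{\theta}\mapsto\bigl|\sum_n r_n\exp(-2\pi i\theta_n)\bigr|$ reduces the reverse inclusion to showing that both endpoints $R=\sum_n r_n$ (take all $\theta_n=0$) and $0$ are attained; and $0$ is attained by the polygon-closing induction. In the inductive step, the only point that needs care is that the reduced list (with $r_1$ and $r_2$ merged into $r_1+r_2$) still satisfies the max-$\le$-sum-of-the-rest hypothesis, and you handle both sub-cases correctly: if $r_N$ remains the maximum the original hypothesis applies verbatim, while if $r_1+r_2$ becomes the new maximum the chain $r_1+r_2\le 2r_2\le r_3+r_N\le\sum_{n=3}^{N}r_n$ (valid since $N\ge 4$ and the $r_n$ are sorted) gives what is needed. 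The base case $N=3$ is the standard law-of-cosines verification that all three triangle inequalities hold, which they do under $r_1\le r_2\le r_3\le r_1+r_2$, so the argument is complete.
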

\begin{proof}
The proof is written in \cite{CG2014} roughly and in \cite{T2013} precisely.
\end{proof}

\subsubsection{The mollifier and the estimate for its Fourier series expansion}\label{sbs:MOF}
Let $Q, M>2$ and put $\delta= \delta_Q = Q^{-1} \in (0,1)$.
We will prepare a mollifier on $\mathbb{R}^{\mathcal{P}(Q)}$ and its truncated formula as follows;
We take $\phi \in C^{\infty}(\mathbb{R})$ satisfying
\[
\phi (x) \geq 0,  \quad
\supp (\phi) \subset [ -1, 1], \quad
\int^\infty_{- \infty} \phi (x) dx = 1.
\]
Throughout this subsection,
let the implicit constants depend on $\phi$.
We define the function $\phi_\delta : [ - 1/2, 1/2 ] \rightarrow \mathbb{R}$ by
\[
\phi_\delta ( \theta ) = \frac{1}{\delta} \phi \left( \frac{\theta}{\delta} \right), \quad
\theta \in \left[ - 1/2 , 1/2 \right] 
\]
and extend $\phi_\delta$ onto $\mathbb{R}$ by periodicity with period $1$.
We also define $\Phi_Q (\underline{\theta}): \mathbb{R}^{\mathcal{P}(Q)} \rightarrow \mathbb{R}$ by 
\[
\Phi_Q (\underline{\theta}) = \prod_{p \leq Q} \phi_\delta(\theta_p), \quad
\underline{\theta} = (\theta_p)_{p \in \mathcal{P}(Q)} \in \mathbb{R}^{\mathcal{P}(Q)}.
\]
Note that
\begin{equation}\label{eqn:SUPP}
\Phi_Q (\underline{\theta}) \neq 0 \quad \textrm{implies} \quad \underline{\theta} \in [-\delta, \delta]^{\mathcal{P}(Q)} + \mathbb{Z}^{\mathcal{P}(Q)}.
\end{equation}
For any $\theta_0 \in \mathbb{R}$, 
the function $ \phi_{\delta} ( \theta - \theta_0 )$, $\theta \in \mathbb{R}$ can be represented as a Fourier series
\[
\phi_\delta(\theta - \theta_0) 
= \sum_{n \in \mathbb{Z}} \alpha_n ( \theta_0 ) \exp( 2\pi i n \theta ) 
\]
where the Fourier coefficients are given by
\[
\alpha_n (\theta_0)
= \int_{-1/2}^{1/2} \phi_\delta ( \theta - \theta_0 ) \exp( -2 \pi i n \theta ) d \theta.
\]
By integration by parts, we have
\begin{equation}\label{eqn:AL}
\alpha_0 (\theta_0) = 1 
\quad \textrm{and} \quad
|\alpha_n ( \theta_0 )| \leq \min\{ 1, C_\phi / \delta^2 n^2 \}
\end{equation}
with some positive constant $C_\phi$ depending on $\phi.$
Then we have
\begin{align*}
&\Phi_Q (\underline{\theta} - \underline{\theta}^{(0)})
=\sum_{\underline{n} \in \mathbb{Z}^{\mathcal{P}(Q)} }\beta_{\underline{n}}(\underline{\theta}^{(0)}) \exp \left( 2 \pi i \langle \underline{n}, \underline{\theta} \rangle \right)  \\
=&\sum_{ \substack{ \underline{n} = (n_p)_{p \in \mathcal{P}(Q)} \in \mathbb{Z}^{\mathcal{P}(Q)} ; \\\max_{p \leq Q} |n_p| \leq M }} 
\beta_{\underline{n}}(\underline{\theta}^{(0)}) \exp \left( 2 \pi i \langle \underline{n}, \underline{\theta} \rangle \right)
+ O\left( \sum_{ \substack{ \underline{n} \in \mathbb{Z}^{\mathcal{P}(Q)} ; \\\max_{p \leq Q} |n_p| > M }} \left| \beta_{\underline{n}}(\underline{\theta}^{(0)}) \right| \right)
\end{align*}
for $\underline{\theta} =  (\theta_p)_{p \in \mathcal{P}(Q)} \in \mathbb{R}^{\mathcal{P}(Q)}$, where
\[
\langle  \underline{n}, \underline{\theta} \rangle 
= \sum_{p \leq Q} n_p \theta_p
\quad \textrm{and} \quad 
\beta_{\underline{n}}(\underline{\theta}^{(0)})
= \prod_{p \leq Q} \alpha_{n_p}(\theta_p^{(0)}).
\]
Note that the estimates 
\[
\beta_{\mathbf{0}}(\underline{\theta}^{(0)})=1
\quad \textrm{and} \quad
|\beta_{\underline{n}} (\underline{\theta}^{(0)})|
\leq \prod_{p \leq Q} \min\{ 1, C_\phi / \delta^2 n_p^2 \}
\]
hold
by the estimate \eqref{eqn:AL}.
By using the prime number theorem and by noting
\[
\left\{
\underline{n} = (n_p)_{p \in \mathcal{P}(Q) } \in \mathbb{Z}^{\mathcal{P}(Q)} ~ ; ~ \max_{p \leq Q} | n_p | > M
\right\}
=
\bigcup_{q \in \mathcal{P}(Q)}\left\{ \underline{n} \in \mathbb{Z}^{\mathcal{P}(Q)} ~;~ | n_q | > M \right\},
\]
we have
\begin{equation}\label{eqn:EB}
\sum_{ \underline{n} \in \mathbb{Z}^{\mathcal{P}(Q)}} | \beta_{\underline{n}} (\underline{\theta}^{(0)}) |
\leq \left( \sum_{n \in \mathbb{Z}} \min\left\{ 1, \frac{C_\phi}{\delta^2 n^2} \right\} \right)^{\pi (Q)}
\ll \exp ( C_0 Q )
\end{equation}
and 
\begin{align*}
&\sum_{ \substack{ \underline{n} = (n_p)_{p \in \mathcal{P}(Q)} \in \mathbb{Z}^{\mathcal{P}(Q)} ; \\\max_{p \leq Q} |n_p| > M }} \left| \beta_{\underline{n}}(\underline{\theta}^{(0)}) \right| \\
\leq& \pi(Q) \left(\sum_{\substack{n \in \mathbb{Z};\\ |n|>M }} \frac{1}{\delta^2 n^2} \right) \left( \sum_{n \in \mathbb{Z}} \min\left\{ 1, \frac{1}{\delta^2 n^2} \right\} \right)^{\pi(Q) - 1}
\ll \frac{1}{M} \exp( C_1 Q )
\end{align*}
for some positive constant $C_0, C_1$ depending on $\phi$.
Hence we have
\begin{align}
&\Phi_Q (\underline{\theta} - \underline{\theta}^{(0)}) \label{eqn:MOL}
\\
=&\sum_{ \substack{ \underline{n} = (n_p)_{p \in \mathcal{P}(Q)} \in \mathbb{Z}^{\mathcal{P}(Q)} ; \\\max_{p \leq Q} |n_p| \leq M }} 
\beta_{\underline{n}}(\underline{\theta}^{(0)}) \exp \left( 2 \pi i \langle \underline{n}, \underline{\theta} \rangle \right)
+ O \left( \frac{1}{M} \exp( C_1 Q ) \right) . \nonumber
\end{align}

\subsection{Known results for the Selberg class $\mathcal{S}$}
\begin{lemma}\label{lem:CS}
The following statements hold.
\begin{itemize}
\item[(i)] We have $a(p) = b(p)$ for all primes $p$.
\item[(ii)] For any $\varepsilon>0$, we have the inequality
\[
| b(p^l) | \ll_\varepsilon 
(2^l -1 ) p^{l\varepsilon}/l
\]
for all primes $p$ and all $l \in\mathbb{N}$, 
where the implicit constant may depend on $\LF(s)$.
\end{itemize}
\end{lemma}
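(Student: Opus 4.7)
The plan is to derive both parts from two facts: the Euler product axiom gives the Dirichlet series identity $\LF(s) = \exp\bigl(\sum_n b(n) n^{-s}\bigr)$ in $\sigma > 1$, and, after freezing a prime $p$ and setting $z = p^{-s}$, this becomes a one-variable formal power series statement $L_p(z) = \exp\bigl(\sum_{l \geq 1} b(p^l) z^l\bigr)$, where $L_p(z) := \sum_{k \geq 0} a(p^k) z^k$ satisfies $L_p(0) = a(1) = 1$ (forced by $b(1) = 0$). Part (i) then falls out of coefficient-matching at $p^{-s}$, while part (ii) comes from inverting the exponential and bounding combinatorially.

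For (i), I would expand both sides of $\exp\bigl(\sum_n b(n) n^{-s}\bigr) = \sum_n a(n) n^{-s}$ and read off the coefficient of $p^{-s}$ on the right. Because $b(n)$ is supported on prime powers, any product of two or more terms arising from the Taylor series of $\exp$ has its support at either a squared prime power or at a product of two distinct prime powers; the only contribution to $p^{-s}$ itself is therefore the linear term $b(p)$. Equating with the left-hand side gives $a(p) = b(p)$.

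For (ii), apply $\log(1+w) = \sum_{n \geq 1}(-1)^{n-1} w^n/n$ to $w = L_p(z) - 1 = \sum_{k \geq 1} a(p^k) z^k$ and extract the coefficient of $z^l$:
\[
b(p^l) = \sum_{n=1}^l \frac{(-1)^{n-1}}{n} \sum_{\substack{k_1,\dots,k_n \geq 1 \\ k_1 + \cdots + k_n = l}} a(p^{k_1}) \cdots a(p^{k_n}).
\]
Ramanujan's hypothesis $|a(p^k)| \leq C_\varepsilon p^{k\varepsilon}$ bounds each product by $C_\varepsilon^n p^{l\varepsilon}$, the number of compositions of $l$ into $n$ positive parts is $\binom{l-1}{n-1}$, and the elementary identity $\tfrac{1}{n}\binom{l-1}{n-1} = \tfrac{1}{l}\binom{l}{n}$ packages these into
\[
|b(p^l)| \leq \frac{p^{l\varepsilon}}{l}\sum_{n=1}^l C_\varepsilon^n\binom{l}{n} = \frac{p^{l\varepsilon}}{l}\bigl((1+C_\varepsilon)^l - 1\bigr),
\]
which for $C_\varepsilon \leq 1$ is exactly the stated bound $(2^l-1) p^{l\varepsilon}/l$.

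The main obstacle is the case $C_\varepsilon > 1$, where the factor $(1+C_\varepsilon)^l$ is not immediately controlled by $2^l$. My plan is to rerun the above with a smaller auxiliary exponent $\varepsilon_1 < \varepsilon$ and observe that for primes $p \geq p_0 := \bigl((1+C_{\varepsilon_1})/2\bigr)^{1/(\varepsilon - \varepsilon_1)}$ the spurious factor $\bigl((1+C_{\varepsilon_1})/(2 p^{\varepsilon - \varepsilon_1})\bigr)^l$ is at most $1$, yielding the clean form with an absolute constant. For the finitely many remaining primes $p < p_0$, I would fall back on the coarser bound $|b(p^l)| \ll p^{l\vartheta}$ from axiom (iv) and compare directly against $(2^l-1) p^{l\varepsilon}/l$, absorbing the resulting $p$-dependent factor into the implicit constant (permitted, since $\ll_\varepsilon$ may depend on $\LF$). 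I expect this last small-prime bookkeeping to be the most finicky step, though no new ideas are required.
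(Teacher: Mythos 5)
Part (i) is correct, and the combinatorial machinery in part (ii) is also correct: the inversion $b(p^l) = \sum_{n=1}^l \frac{(-1)^{n-1}}{n}\sum_{k_1+\cdots+k_n = l}a(p^{k_1})\cdots a(p^{k_n})$, the composition count $\binom{l-1}{n-1}$, the identity $\tfrac{1}{n}\binom{l-1}{n-1} = \tfrac{1}{l}\binom{l}{n}$, and the bound $|b(p^l)| \leq \frac{p^{l\varepsilon}}{l}\bigl((1+C_\varepsilon)^l-1\bigr)$ are all fine. The large-prime regime is also handled correctly: running with $\varepsilon_1 < \varepsilon$ and using $p \geq \bigl((1+C_{\varepsilon_1})/2\bigr)^{1/(\varepsilon-\varepsilon_1)}$ absorbs the factor $\bigl((1+C_{\varepsilon_1})/2\bigr)^l$ into $p^{l(\varepsilon-\varepsilon_1)}$.

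The small-prime fallback, however, has a genuine gap. For the axiom-(iv) estimate $|b(p^l)| \ll_\LF p^{l\vartheta}$ to imply $|b(p^l)| \ll (2^l-1)p^{l\varepsilon}/l$, one needs $l\,p^{l(\vartheta-\varepsilon)}/(2^l-1)$ to stay bounded as $l \to \infty$, which requires $p^{\vartheta-\varepsilon} < 2$, i.e. $p < 2^{1/(\vartheta-\varepsilon)}$ when $\vartheta > \varepsilon$. (Since $\vartheta < 1/2$, this threshold exceeds $4$, so $p=2,3$ are always safe.) But your $p_0 = \bigl((1+C_{\varepsilon_1})/2\bigr)^{1/(\varepsilon-\varepsilon_1)}$ depends on the Ramanujan constant and can greatly exceed $2^{1/(\vartheta-\varepsilon)}$; for primes $p$ with $2^{1/(\vartheta-\varepsilon)} \leq p < p_0$, the quantity $l\,p^{l(\vartheta-\varepsilon)}/2^l$ is \emph{unbounded} in $l$, so no amount of absorbing a ``$p$-dependent factor into the implicit constant'' can close the argument --- there is no finite constant $C_p$ making $|b(p^l)| \leq C_p(2^l-1)p^{l\varepsilon}/l$ follow from the axiom-(iv) bound alone. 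So the ``finicky but idea-free bookkeeping'' you anticipate does not in fact finish; some additional input is needed for this intermediate range. (For comparison, the paper does not give an argument at all for this lemma --- it simply cites Murty, \emph{Problems in Analytic Number Theory}, Exercise~8.2.9 --- so your proof is being checked on its own merits rather than against the paper's.)
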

\begin{proof}
The proof can be found in \cite[Exercise 8.2.9]{M2007}.
\end{proof}

\begin{lemma}\label{thm:SF}
Let $q_j(r) = (\mu_j + r )/\lambda_j$ for $j =1,\ldots f$ and $r \in \mathbb{N}_0$.
If $x>1$ and $s \neq 0, 1, - q_j(r), \rho$ for $j =1,\ldots f$ and $r \in \mathbb{N}_0$, then we have
\begin{align}
\frac{\LF'}{\LF}(s)
&= - \sum_{n \leq x^2}\frac{\Lambda_{\LF,x}(n)}{n^s}
+ \frac{1}{\log x}\sum_{j =1}^f \sum_{r =0}^\infty \frac{ x^{- q_j(r) -s } - x^{ - 2 (  q_j(r)  + s ) } }{( q_j(r) +s )^2}\label{eqn:SF} \\
&\quad + m_{\LF} \frac{x^{-2s} - x^{-s}}{s^2 \log x} 
+ m_{\LF} \frac{x^{2(1-s)} - x^{1-s} }{(1-s)^2 \log x}
+\frac{1}{\log x}\sum_{\rho}\frac{x^{\rho - s} - x^{ 2 ( \rho - s ) }}{(\rho - s)^2},\nonumber
\end{align}
where $\Lambda_{\LF,x}(n)$ is defined by
\[
\Lambda_{\LF,x}(n) = \Lambda_{\LF}(n) \quad \textrm{for $1 \leq n \leq x$}, \quad
\Lambda_{\LF}(n) \frac{\log (x^2 /n)}{\log x} \quad \textrm{for $x\leq n \leq x^2$},
\]
and $m_{\LF}$ is defined by
\[
m_{\LF}
=\begin{cases}
    \textrm{the order of pole of $\mathcal{L}(s)$ at $s=1$} \quad  \textrm{if $\LF(s)$ has a pole at $s =1$}, \\
    0 \quad  \textrm{if $\LF(s)$ has no zeros or poles at $s =1$},\\ 
    (-1) \times \textrm{$($the order of zero of $\mathcal{L}(s)$ at $s =1$$)$} \quad \textrm{if $\LF(1) = 0$}.
  \end{cases}
\]
\end{lemma}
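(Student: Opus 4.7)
The plan is to prove \eqref{eqn:SF} by the classical contour-integration method used for explicit formulas, applied to the Mellin-type kernel
$$
K(w) := \frac{x^{2w} - x^w}{w^2 \log x},
$$
which has a simple pole at $w=0$ with residue $1$ and is otherwise entire. For a fixed $c > \max\{1, 1-\Re s\}$, introduce
$$
I(s) := \frac{1}{2\pi i}\int_{c-i\infty}^{c+i\infty} \left(-\frac{\LF'}{\LF}(s+w)\right) K(w)\, dw.
$$
On the line $\Re w = c$ the Dirichlet series $-\LF'/\LF(s+w) = \sum_n \Lambda_\LF(n) n^{-s-w}$ converges absolutely; interchanging sum and integral and applying the standard identity
$(2\pi i)^{-1}\int_{c-i\infty}^{c+i\infty} y^w w^{-2}\, dw = \max\{0,\log y\}$
with $y = x^2/n$ and $y = x/n$ produces $I(s) = \sum_{n\leq x^2} \Lambda_{\LF,x}(n)/n^s$, which accounts for the first term of \eqref{eqn:SF}.

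Next, I would shift the contour leftward to $\Re w = -C$ through a rectangle $[-C,c]\times[-T,T]$ and apply the residue theorem. Under the hypothesis $s \notin \{0, 1, -q_j(r), \rho\}$, the poles of the integrand enclosed are: $w = 0$, from the kernel, with residue $-\LF'/\LF(s)$; $w = 1-s$, from the pole of $\LF$ of order $m_\LF$ at $s=1$, with residue $m_\LF K(1-s)$; $w = -s$, from the behavior of $\LF$ at $s+w = 0$ forced by the functional equation, with residue $m_\LF K(-s)$; $w = \rho - s$ for each nontrivial zero $\rho$, with residue $-K(\rho-s)$ counted with multiplicity; and $w = -q_j(r)-s$ for each trivial zero arising from the poles of $\Gamma(\lambda_j u + \mu_j)$, with residue $-K(-q_j(r)-s)$. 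Summing these residues, writing $K$ out explicitly as $(x^{2w} - x^w)/(w^2\log x)$, and transposing $-\LF'/\LF(s)$ to the left-hand side reproduces precisely \eqref{eqn:SF}.

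The principal technical obstacle is to verify that the remaining parts of the shifted contour contribute nothing in the limits $C, T \to \infty$. For the far-left vertical piece at $\Re w = -C$, the functional equation in axiom (iii) allows one to express $\LF'/\LF(s+w)$ in terms of $\LF'/\LF(1-\overline{s+w})$ (which, for $\Re w = -C$ large, lies in the region of absolute convergence of the Dirichlet series, hence is $O(1)$) plus logarithmic derivatives of the gamma factor $\gamma(u) = R^u\prod_j \Gamma(\lambda_j u + \mu_j)$; by Stirling these are $O(\log(2+|w|))$, and the factor $|x^{2w}| = x^{-2C}$ in $K$ drives the integral to zero as $C \to \infty$. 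For the two horizontal segments, one chooses $T = T_k \to \infty$ avoiding the ordinates of nontrivial zeros --- possible because \eqref{eqn:GRV} implies $N_\LF(T+1)-N_\LF(T) \ll \log T$ --- so that $\LF'/\LF(s+w)$ is only polynomially large in $|w|$ on these lines, and is absorbed by the $w^{-2}$ decay of $K$. Once these bounds are in hand, equating the two evaluations of $I(s)$ delivers \eqref{eqn:SF}; the rest is routine residue bookkeeping.
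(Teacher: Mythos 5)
Your proposal is correct and takes the same approach as the paper, whose entire proof of this lemma is the single citation ``This follows by the same argument as in \cite[Theorem 14.20]{T1986}''---i.e.\ Titchmarsh's contour-shift argument with exactly the kernel $K(w)=(x^{2w}-x^{w})/(w^{2}\log x)$ that you employ. The residue bookkeeping, the use of the functional equation on the far-left vertical line, and the choice of horizontal segments avoiding ordinates of zeros are all as in Titchmarsh's proof, so the argument goes through (with the minor caveat that when some $\mu_j=0$ the $m_{\LF}K(-s)$ term and the $(j,0)$ summand of the trivial-zero sum together supply the correct residue at $w=-s$).
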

\begin{proof}
This follows by the same argument as in \cite[Theorem 14.20]{T1986}.
\end{proof}

\section{Proofs}
\subsection{Proof of Theorem \ref{thm:MTH}}
We fix $\max\{ \sigma_{\LF} , 1 - 2 E_{\LF}  \} < \sigma_0 < 1$
and $N \in \mathbb{N}$, $\underline{c} = (c_k)_{k =0}^{N-1} \in \mathbb{C}^N$ and take $\epsilon \in (0, 1)$.
We begin with the following lemma.
\begin{lemma}\label{lem:AP}
There exist $d_1(\sigma_0, E_{\LF})>0$ and $C (\LF,  \sigma_0, N ) > 0$ such that if
\[
Q > C (\LF, \sigma_0, N ) \left( \| \underline{c} \| + 1 / \varepsilon  \right)^{d_1(\sigma_0, E_{\LF})},
\]
then there exist $\underline{\theta}^{(\star)} = \underline{\theta}^{(\star)}(Q) = ( \theta_p^{(\star)} )_{p \in \mathcal{P}(Q)} \in \mathbb{R}^{\mathcal{P}(Q)}$ such that
\[
\left| \frac{d^k}{d s^k} \log \LF_{\mathcal{P}(Q)} (\sigma_0, \underline{\theta}^{(\star)}) - c_k \right| < \frac{\epsilon}{3} \quad \textrm{for $k = 0, 1, \ldots, N -1$}.
\]
Here $ (d^k/ds^k) F (\sigma) $ means $(d^k/ds^k) F (s)|_{s = \sigma}$ for holomorphic $F(s)$.
\end{lemma}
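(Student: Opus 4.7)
The plan is to adapt Voronin's effective construction (see \cite{V1988} and \cite{KV1992}) to this setting. Because $\sigma_0 > 1/2$, Lemma \ref{lem:CS} ensures that the contribution to $(d^k/ds^k)\log\LF_{\mathcal P(Q)}(\sigma_0,\underline\theta)$ coming from the prime powers with $l\ge 2$ is uniformly bounded in $\underline\theta$ by an absolutely convergent sum depending only on $\LF$, $\sigma_0$ and $k$. Hence the task reduces to choosing phases $\theta_p$ so that the $l=1$ prime sum $\sum_{p\le Q} a(p)(-\log p)^k e^{-2\pi i\theta_p}/p^{\sigma_0}$ matches each $c_k$ to within about $\epsilon/3$.

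First I would introduce a parameter $X$, to be chosen polynomially in $\|\underline c\|+1/\epsilon$, together with disjoint intervals $I_j=(X_j, X_j+H_j]$ for $j=0,\dots,N-1$, where $X_j=2^jX$ (matching the notation of Lemma \ref{lem:Van}) and $H_j=X_j^{1-\eta}(\log X_j)^{D}$. The exponent $\eta$ is chosen slightly above $(1-\sigma_0)/2$; this range is non-empty and satisfies $\eta\le E_{\LF}$ exactly because $\sigma_0>1-2E_{\LF}$. The upper bound on $\eta$ makes (C3) applicable to each $I_j$, while the lower bound guarantees that the Taylor replacement $\log p=\log X_j+O(X_j^{-\eta})$ on $I_j$ produces a total per-derivative error below $\epsilon/3$ after summation over $I_j$ and over $k$.

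After this replacement, the problem becomes the linear system
\[
\sum_{j=0}^{N-1}(-\log X_j)^k\, S_j = \tilde c_k \qquad (k=0,\dots,N-1),
\]
where $S_j=\sum_{p\in I_j} a(p) e^{-2\pi i\theta_p}/p^{\sigma_0}$ and $\tilde{\underline c}$ is $\underline c$ adjusted by the fixed $l\ge 2$ tail and by the contribution of primes outside $\bigcup_j I_j$. Lemma \ref{lem:Van} delivers a solution $\underline\zeta=(\zeta_0,\dots,\zeta_{N-1})$ with $\|\underline\zeta\|\ll_N(\log X)^{N-1}\|\tilde{\underline c}\|$. To realise $S_j=\zeta_j$ I would invoke Lemma \ref{lem:EG} with $r_p=|a(p)|/p^{\sigma_0}$, which requires
\[
|\zeta_j|\le \sum_{p\in I_j}\frac{|a(p)|}{p^{\sigma_0}}.
\]
The right-hand side is bounded below by combining (C3) and (C1) with the Ramanujan estimate through $\sum|a(p)|\ge (\sum|a(p)|^2)/\max_p|a(p)|$, yielding $\gg X_j^{1-\eta-\sigma_0-\epsilon'}/\log X_j$ for any small $\epsilon'>0$. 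Comparing the two bounds on $|\zeta_j|$ forces $X$ to be polynomial in $\|\underline c\|+1/\epsilon$, and the exponent $d_1(\sigma_0,E_{\LF})$ in the conclusion can be read off from the constraints $\eta>(1-\sigma_0)/2$ and $\eta\le E_{\LF}$, giving something of size roughly $2/(1-\sigma_0)$.

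I expect the main obstacle to be the contribution of primes outside $\bigcup_j I_j$: the gap primes in $(X_j+H_j, X_{j+1}]$, the small primes $p\le X$, and the tail primes in $(X_{N-1}+H_{N-1}, Q]$. A naive choice $\theta_p=0$ for these primes leaves a background term of size $\gg X^{1-\sigma_0}$, large enough in principle to dominate the target and spoil the polynomial dependence. Resolving this should either call Lemma \ref{lem:EG} a second time on the complementary set to push its contribution close to zero, or regroup primes so that $\bigcup_j I_j$ effectively exhausts the relevant range up to $Q$ with compensating coefficients in the Vandermonde step. Carrying this bookkeeping through while simultaneously verifying that the Taylor residue, the $l\ge 2$ tail, and this background each stay below $\epsilon/3$ is where the bulk of the technical work will lie.
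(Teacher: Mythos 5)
You have correctly identified the overall architecture (intervals $I_j$, Taylor replacement $\log p\approx\log X_j$, the Vandermonde step from Lemma \ref{lem:Van}, realization of each target value via Lemma \ref{lem:EG}, and the lower bound on $\sum_{p\in I_j}|a(p)|/p^{\sigma_0}$ from (C1) and (C3)); all of this matches the paper's Steps 2--6. But the "background primes" problem that you flag at the end is not a peripheral technicality --- it is the central difficulty, and neither of the two fixes you sketch will close it.

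The paper's resolution is the construction in Step~1 (Claim~\ref{claim:PS}): one builds a fixed vector of phases $\underline{\theta}^{(0)}$ by a greedy, sign-alternating rule along the primes with $b(p)\neq 0$, which forces the power-saving partial-sum bound
\[
\Bigl|\sum_{p\le\xi} b(p)\,e^{-2\pi i\theta_p^{(0)}}\Bigr|\le C_{\LF,\eta}\,\xi^{\eta}\qquad(\eta<\tfrac12).
\]
By partial summation this makes the full background series $\gamma_k=\sum_p\sum_l (-\log p^l)^k b(p^l) e^{-2\pi i l\theta_p^{(0)}}/p^{l\sigma_0}$ absolutely convergent for $\sigma_0>1/2$, and the Vandermonde system is then solved with right-hand side $c_k-\gamma_k$ rather than $c_k$. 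That shift is what makes the contribution of all primes outside $\bigcup_j I_j$ harmless: in Claim~\ref{claim:EEA} the tail $\sum_{p>Q}$ and the overlap with $\mathcal{M}$ are estimated via the same power-saving bound. Your alternative (a) --- applying Lemma~\ref{lem:EG} once more on the complement to kill its contribution --- fails because Lemma~\ref{lem:EG} gives a single scalar target, whereas you need $N$ derivative constraints simultaneously; since the complement spans $p$ from $2$ up past $X$, the quantities $(-\log p)^k$ are not approximately constant across it and cannot be compressed into one constraint. Alternative (b) is left entirely unspecified. So the missing ingredient is precisely the alternating-phase partial-sum estimate.

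A smaller but real omission: to use Lemma~\ref{lem:EG} you must verify its hypothesis that the largest $r_p$ does not exceed the sum of the others. The paper handles this by adjoining to each $\mathcal{M}_j^{(Y,H)}$ a single auxiliary prime $p_{k_j}\asymp X$ (which, by the positive-density Step~2, has $|a(p_{k_j})|>\mu$) and proving the two conditions (Y1), (Y2) of Claim~\ref{claim:PSL}; nothing in your proposal checks this hypothesis, and your lower bound via $\sum|a(p)|\ge\sum|a(p)|^2/\max|a(p)|$, while serviceable for the radius estimate, does not by itself address it.
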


\begin{proof}[Proof of Lemma \ref{lem:AP}]
We divide the proof into several steps.
\begin{step}
We will show the following claim and give a certain convergent series.
\begin{claim}\label{claim:PS}
There exists $\underline{\theta}^{(0)} = ( \theta_{p_n}^{(0)} )_{n=1}^{\infty} \in \mathbb{R}^{\mathbb{N}}$ such that the estimate
\begin{equation}\label{eqn:ALT}
\left| \sum_{p \leq \xi} b (p) \exp ( - 2 \pi i \theta_p^{(0)}) \right|
 \leq C_{\LF, \eta} \xi^{\eta}
\end{equation}
holds for any $\xi > 0$ when the estimate $ | b ( p ) | = |a (p)| \leq C_{\LF, \eta} p^\eta $ holds for any prime number $p$ with some positive number $\eta$.
\end{claim}
\begin{proof}[Proof of Claim \ref{claim:PS}]
We put $\mathcal{P}_0 = \{ p \in \mathcal{P} ~;~ b(p) \neq 0 \}$ and $\{ p_n \}_{n=1}^\infty$ which satisfy $\{ p_n \}_{n =1}^{\infty} = \mathcal{P}_0$ and $p_n < p_{n +1}$ for any $n \in \mathbb{N}$.
For any $n\geq 1$, 
put
\[
b(p_n)
=| b(p_n) | \exp ( 2 \pi i \theta_{p_n}^{(\mathcal{L})} ), 
\]
and we take $\underline{\theta}^{(0)} = ( \theta_{p_n}^{(0)} )_{n=1}^{\infty} \in \mathbb{R}^{\mathbb{N}}$ so that
\[
\theta_{p_1}^{(0)}
= \theta_{p_1}^{(\mathcal{L})}, \quad
\theta_{p_2}^{(0)}
= 1/2 + \theta_{p_2}^{(\mathcal{L})},
\]
\[
\theta_{p_3}^{(0)}
=\begin{cases}
    \theta_{p_3}^{(\mathcal{L})} \quad  \textrm{if} \quad \displaystyle\sum_{j =1}^{2} b(p_j)\exp(- 2 \pi i \theta_{p_j}^{(0)}) \leq 0, \\
    1/2 + \theta_{p_3}^{(\mathcal{L})} \quad  \textrm{if} \quad \displaystyle\sum_{j =1}^{2} b(p_j)\exp(- 2 \pi i \theta_{p_j}^{(0)}) > 0,
  \end{cases}
\]
\[
\vdots
\]
\[
\theta_{p_l}^{(0)}
=\begin{cases}
    \theta_{p_l}^{(\mathcal{L})} \quad  \textrm{if} \quad \displaystyle\sum_{j =1}^{l -1} b(p_j)\exp(- 2 \pi i \theta_{p_j}^{(0)}) \leq 0, \\
    1/2 + \theta_{p_l}^{(\mathcal{L})} \quad  \textrm{if} \quad \displaystyle\sum_{j =1}^{l -1} b(p_j)\exp(- 2 \pi i \theta_{p_j}^{(0)}) > 0.
  \end{cases}
\]
By the construction of $\underline{\theta}^{(0)} \in \mathbb{R}^{\mathbb{N}}$, 
we have the estimate \eqref{eqn:ALT}
for any $\xi > 0$ when the estimate $ | b ( p ) | = |a (p)| \leq C_{\LF, \eta} p^\eta $ holds for any prime number $p$ with some positive number $\eta$.
Taking $\theta_p = 0$ for $p \in \mathcal{P} \setminus \mathcal{P}_0$, 
we have the conclusion.
\end{proof}
We put
\[
\gamma_k
= \sum_{p}^\infty \sum_{l =1}^\infty \frac{(- \log p^l)^k b(p^l) \exp( - 2 \pi i l \theta_{p}^{(0)}) }{p^{l \sigma_0}}
\]
for any $k = 0, \ldots, N-1$ and $\underline{\gamma} = ( \gamma_k )_{k =0}^{N -1}$.
Since it holds that
\[
\sum_{p} \sum_{l =2}^\infty \frac{ | b(p^l) | ( \log p^l )^k }{p^{l \sigma_0}} < \infty
\]
by an argument similar to (2.13) in \cite{NS2010},
we find that the series $\gamma_k$ is convergent by partial summation.
\end{step}

\begin{step}
We will use the positive density method introduced by Laurin\u{c}ikas and Matsumoto \cite{LM2000}.
The following idea is due to \cite{NS2010}.
Put $\mu = \sqrt{\kappa/8}$ and $\rho =  \kappa/4$.
Then we have
\[
\kappa - \rho>0 \quad \textrm{and} \quad
\kappa - 2 \mu^2 - \rho>0.
\]
We first take a positive number $\eta$ so that
\begin{equation}\label{eqn:eta1}
0 <\eta < 1/2 (1 - E_{\LF}),
\end{equation}
which is chosen more precisely later (see \eqref{eqn:eta}).
Then there exists 
 $C_{\LF, \eta} >0$ such that $| a( p ) | \leq C_{\LF, \eta} p^{\eta}$ for any prime number $p$ by the axiom (i) of the Selberg class $\mathcal{S}$. 
Let $U$ be a large positive parameter depending on $\LF, \sigma_0, N, \eta$, 
and let $U^{1 - E_{\LF}} (\log U)^{D +1} \leq H \leq U$.
We put 
\[
\mathcal{M}_{\mu, j}^{(U,  H)} 
= \{p \in \mathcal{P} ~;~ 2^jU < p \leq 2^j U  + H , | a (p) | > \mu\}
\]
for $j = 0,1, \ldots N-1$.
\begin{claim}\label{claim:PDM}
We have
\begin{equation}
\#(\mathcal{M}_{\mu, j}^{(U, H)}) 
\gg_{\LF, N, \eta} \frac{ H }{ U^{2 \eta} \log U },  \label{eqn:PDM}
\end{equation}
where $\#(A)$ denotes the cardinality of the set A.
\end{claim}
\begin{proof}[Proof of claim \ref{claim:PDM}]
Put $\pi_\mu(x) = \#\{p \in \mathcal{P} ~;~ p \leq x , | a(p) | > \mu \}$.
When $\alpha > \beta \geq 1$, 
it holds that
\begin{equation}\label{eqn:UB}
\sum_{\alpha < p \leq \beta} | a(p) |^2
\leq (C_{\LF, \eta}^2\beta^{2\eta} - \mu^2)\left( \pi_\mu(\beta) - \pi_\mu(\alpha) \right)
+ \mu^2(\pi(\beta) - \pi(\alpha) )
\end{equation}
by $(2.26)$ in \cite{NS2010}. 
By the condition (C3), 
it holds that
\begin{equation}\label{eqn:PNTS1}
\pi(2^j U +  H) - \pi(2^j U)
\leq 2 \frac{H}{\log U}
\end{equation}
and
\begin{equation}\label{eqn:PNTS2}
\sum_{2^j U < p \leq 2^j U + H} | a(p) |^2
\geq (\kappa - \rho)\frac{H}{\log U}.
\end{equation}
Substituting $\alpha = 2^j U$ , $\beta = 2^j U +  H$ for $j=0,1,\ldots, N-1$ into \eqref{eqn:UB} and using the estimate \eqref{eqn:PNTS1} and \eqref{eqn:PNTS2},
we have
\begin{align*}
\#(\mathcal{M}_{\mu, j}^{(U, H)}) 
=& \pi_\mu(2^j U + H ) - \pi_\mu(2^j U) \\
\geq& (\kappa - 2 \mu^2 - \rho)\frac{H}{ (4^{N \eta} C_{\LF ,\eta}^2 U^{2\eta} - \mu^2) \log U}  \\ 
\gg&_{\LF, N, \eta} \frac{ H }{ U^{2 \eta} \log U }. 
\end{align*}
This completes the proof.
\end{proof}
\end{step}

\begin{step}
We will use Lemma \ref{lem:EG} in this step.
Let $X$ be a sufficiently large positive number depending on $\LF, \sigma_0, N, \eta$.
We may assume that $\#(\mathcal{M}_{\mu, 0}^{(X, X)})\geq N$ by \eqref{eqn:eta1} and \eqref{eqn:PDM}.
Fix the distinct primes $p_{k_0}, p_{k_2}, \ldots , p_{k_{N - 1}} \in \mathcal{M}_{\mu, 0}^{(X, X)}$.
Let $Y \geq 2X + 1$ be a positive parameter which is determined later (see \eqref{eqn:Y1}), 
and let $Y^{1 - E_{\LF}} (\log Y)^{D +1} \leq H \leq Y$ which is determined later (see \eqref{eqn:EU}).
Let
\[
\mathcal{M}_j^{(Y, H)}
= \left\{ p \in \mathcal{P} ~;~ 2^j Y < p \leq 2^j Y + H\right\}
\]
and put 
\[
\mathcal{M}_\mu^{(Y, H)} = \bigsqcup_{j =0}^{N -1} \mathcal{M}_{\mu, j}^{(Y, H)}
\quad \textrm{and} \quad
\mathcal{M}^{(Y, H)} = \bigsqcup_{j =0}^{N -1} \mathcal{M}_{ j}^{(Y, H)}.
\]
\begin{claim}\label{claim:PSL}
We consider the following two conditions;
\begin{enumerate}[$(i)$]
\item[{\rm (Y1)}] For any $p_X \in \mathcal{M}_{\mu, 0}^{(X, X)}$ and $p_Y \in \mathcal{M}^{(Y, H)}$, it holds that
\[
\frac{|a(p_X)|}{p_X^{\sigma_0}}
\geq \frac{|a(p_Y)|}{p_Y^{\sigma_0}}, 
\]
\item[{\rm (Y2)}] For any $p_X \in \mathcal{M}_{\mu, 0}^{(X, X)}$ and $j = 0, 1, \ldots , N-1$, 
it holds that
\[
\frac{|a(p_X)|}{p_X^{\sigma_0}}
\leq \sum_{p \in \mathcal{M}_{j}^{(Y, H)}} \frac{|a(p)|}{p^{\sigma_0}}.
\]
\end{enumerate}
Then the choice
\begin{equation}\label{eqn:Y1}
Y = \left(\frac{C_{\LF, \eta}}{\mu}\right)^{\frac{1}{\sigma_0 - \eta}} (2 X)^{\frac{\sigma_0}{\sigma_0 - \eta}},
\end{equation}
yields the condition {\rm (Y1)}, and the estimate 
\begin{equation}\label{eqn:sur2}
\frac{1}{X^{\sigma_0 - \eta}} 
\ll_{\LF, \sigma_0, N, \eta} \frac{H}{Y^{\sigma_0 + 2 \eta} (\log Y)^2}.
\end{equation}
yields the condition {\rm (Y2)}. $($We take $H$ suitably which satisfies the bound \eqref{eqn:sur2} later $($see \eqref{eqn:eta}$)$.$)$
\end{claim}
\begin{proof}[Proof of Claim \ref{claim:PSL}]
We first consider the condition (Y1).
Since the estimates
\[
\frac{|a(p_X)|}{p_X^{\sigma_0}} 
\geq \frac{\mu}{p_X^{\sigma_0}}
\geq \frac{\mu}{(2X)^{\sigma_0}}
\quad \textrm{and} \quad
\frac{|a(p_Y)|}{p_Y^{\sigma_0}} 
\leq \frac{C_{\LF, \eta}}{p_Y^{\sigma_0 - \eta}}
\leq \frac{C_{\LF, \eta}}{Y^{\sigma_0 - \eta }}
\]
hold for $p_X \in \mathcal{M}_{\mu, 0}^{(X, X)}$ and $p_Y \in \mathcal{M}^{(Y, H)}$, 
the choice $Y$ in \eqref{eqn:Y1} yields the condition (Y1).

Next we consider the condition (Y2). 
Now it holds that
\begin{equation}\label{eqn:sur1}
\sum_{p \in \mathcal{M}_{ j}^{(Y, H)}} \frac{|a(p)|}{p^{\sigma_0}}
\geq \sum_{p \in \mathcal{M}_{\mu, j}^{(Y, H)}} \frac{|a(p)|}{p^{\sigma_0}}
\geq \frac{\mu}{(2^N Y)^{\sigma_0}} \#\left( \mathcal{M}_{\mu, j}^{(Y, H)} \right)
\gg_{\LF, \sigma_0, N, \eta} \frac{H}{Y^{\sigma_0 + 2 \eta} \log Y}
\end{equation}
by \eqref{eqn:PDM}.
By using the above estimate and the estimate $|a(p_X)| p_X^{- \sigma_0} \leq C_{\LF, \eta}X^{-( \sigma_0 - \eta)}$ for $p_X \in \mathcal{M}_{\mu, 0}^{(X, X)}$, 
the condition (Y2) holds when the estimate $\eqref{eqn:sur2}$ holds.
\end{proof}
For any $j = 0, 1, \ldots , N - 1$, put 
\[
\mathcal{M}_j
= \{p_{k_j}\} \sqcup \mathcal{M}_j^{(Y, H)} 
\quad \textrm{and} \quad 
\mathcal{M} = \bigsqcup_{j =0}^{N -1} \mathcal{M}_j. 
\]
In what follows, we take 
\[
Y = \left(\frac{C_{\LF, \eta}}{\mu}\right)^{\frac{1}{\sigma_0 - \eta}} (2 X)^{\frac{\sigma_0}{\sigma_0 - \eta}}.
\]
Then we have
\begin{equation}\label{eqn:SL}
\left\{\sum_{p \in \mathcal{M}_j}\frac{b(p) \exp (- 2 \pi i \theta_p)}{p^{\sigma_0}} ~;~ (\theta_p)_{p \in \mathcal{M}_j} \in [0,1)^{ \mathcal{M}_j} \right\} 
=\left\{ z \in \mathbb{C} ~;~ | z | \leq \sum_{p \in \mathcal{M}_j} \frac{|b(p)|}{p^{\sigma_0}}  \right\}
\end{equation}
by Lemma \ref{lem:EG} and by (i) of Lemma \ref{lem:CS} when the estimate \eqref{eqn:sur2} holds.
\end{step}

\begin{step}
Let $\underline{\theta} = (\theta_p)_{p \in \mathcal{M}} \in \mathbb{R}^{\mathcal{M}}$ and write $\underline{\theta}_j = (\theta_p)_{p \in \mathcal{M}_j}$. 
We will prove the following claim. 
\begin{claim}\label{claim:FSE}
For $j, k = 0,1, \ldots N-1$ and for $\underline{\theta} = (\theta_p)_{p \in \mathcal{M}} \in \mathbb{R}^{\mathcal{M}}$, 
we have
\begin{equation}\label{eqn:FSE}
\frac{\partial^k}{\partial s^k}\phi_{\mathcal{M}_j}(\sigma_0 , \underline{\theta}_j)
= (- \log Y_j)^k \phi_{\mathcal{M}_j}(\sigma_0 , \underline{\theta}_j) + R_{j,k} , 
\end{equation}
\end{claim}
where $Y_j = 2^j Y$ and 
\begin{equation}\label{eqn:R1}
R_{j,k}
\ll_{\LF, \sigma_0, N, \eta} (\log Y)^{N -2} H^2 Y^{-(1 + \sigma_0 -\eta )}
+ \frac{(\log X)^{N -1}}{X^{\sigma_0 -\eta}}.
\end{equation}
\begin{proof}[Proof of Claim \ref{claim:FSE}]
By the equation \eqref{eqn:FSE}, we have
\begin{align*}
R_{j,k}
=& \sum_{p \in \mathcal{M}_j^{(Y, H)}}\left\{ (- \log p)^k - ( - \log Y_j )^k \right\}b(p)p^{- \sigma_0}\exp(-2\pi i \theta_p) \\
& \quad + \left\{ (- \log p_{k_j})^k - ( - \log Y_j )^k \right\}b(p_{k_j})p_{k_j}^{- \sigma_0}\exp( -2 \pi i \theta_{p_{k_j}} ).
\end{align*}
Using the mean value theorem for $(- \log p)^k - ( - \log Y_j )^k$ in the first term, 
we have
\begin{align*}
R_{j,k}
\ll_N \sum_{p \in \mathcal{M}_j^{(Y, H)}}(\log Y)^{k -1}\frac{H}{Y} | b( p ) | p^{-\sigma_0} 
\quad + |(\log p_{k_j})^k - ( \log Y_j )^k | |b(p_{k_j})| p_{k_j}^{-\sigma_0}.
\end{align*}
By using the estimate $|b(p)| = |a (p) | \ll_{\LF, \eta} p^\eta$, 
it holds that
\begin{align*}
\sum_{p \in \mathcal{M}_j^{(Y, H)}}(\log Y)^{k -1}\frac{H}{Y} | b( p ) | p^{-\sigma_0}
&\ll_{\LF, \eta} (\log Y)^{k -1} \frac{H}{Y} \sum_{p \in \mathcal{M}_j^{(Y, H)}} p^{\eta-\sigma_0} 
\\
&\leq (\log Y)^{k -1} \frac{H}{Y} Y^{\eta - \sigma_0}\sum_{p \in \mathcal{M}_j^{(Y, H)}}1
\\
&\ll (\log Y)^{N -2} H^2 Y^{-(1 + \sigma_0 -\eta )},
\end{align*}
and
\begin{align*}
&| (\log p_{k_j})^k - ( \log Y_j )^k | |b(p_{k_j})| p_{k_j}^{- \sigma_0} \\
&\quad \ll_{\LF, N, \eta} ( \log Y )^{N -1} p_{k_j}^{\eta - \sigma_0} 
\ll \frac{(\log Y)^{N -1}}{X^{\sigma_0 -\eta}}
\ll_{\LF, \sigma_0, N, \eta} \frac{(\log X)^{N -1}}{ X^{\sigma_0 - \eta} }.
\end{align*}
This completes the proof.
\end{proof}
\end{step}

\begin{step}
We now consider the following system of linear equations in the unknown $z_j$:
\begin{equation}\label{eqn:VDM}
\sum_{j = 0}^{ N-1 } (- \log Y_j)^k z_j = c_k - \gamma_k \quad \text{for $k = 0, 1, \ldots , N-1$}.
\end{equation}
Since the coefficient matrix of this system is the Vandermonde matrix, 
Lemma \ref{lem:Van} implies that it has a unique solution $\underline{z}=\underline{z}(Y, \underline{c}, \underline{\gamma}) = (z_0, z_1, \ldots , z_{N-1})$ which satisfies the bound
\begin{equation}\label{eqn:sur3}
\| \underline{z} \|
\ll_N (\log Y)^{N -1}\| \underline{c} - \underline{\gamma} \|.
\end{equation}
\begin{claim}\label{claim:SL}
A sufficient condition that the system of equations 
\begin{equation}\label{eqn:SE}
\phi_{\mathcal{M}_j}(\sigma_0 , \underline{\theta}_j) = z_j, \quad
\text{for}\quad 
j=0,1,\ldots ,N-1
\end{equation}
has a solution $\underline{\theta}\in\mathbb{R}^{\mathcal{M}}$
is that the estimate \eqref{eqn:sur2} and the estimate
\begin{equation}\label{eqn:sur4}
\frac{H}{ Y^{\sigma_0 + 2 \eta}( \log Y) ^{ N + 1 }}
\gg_{\LF, \sigma_0, N, \eta} \| \underline{c} - \underline{\gamma} \| +1.
\end{equation}
hold.
\end{claim}
\begin{proof}[Proof of Claim \ref{claim:SL}]
It is enough to take $H$ to establish 
\begin{equation}\label{eqn:Sur}
\| \underline{z} \|
\leq \sum_{p \in \mathcal{M}_j}\frac{| b(p) |}{p^{\sigma_0}}
\end{equation}
by \eqref{eqn:SL}.
The bound \eqref{eqn:sur1} and \eqref{eqn:sur3} give the proof.
\end{proof}
\end{step}

\begin{step}
Note that, by $\sigma_0 \in (\max\{ \sigma_{\LF} , 1 - 2 E_{\LF}  \},1)$, it holds that $\left(\sigma_0, \frac{1 + \sigma_0}{2} \right) \cap \left( 1 - E_{\LF}, 1 \right) \neq \emptyset$.
We will show the following claim.
\begin{claim}\label{claim:THE}
Let $H = Y^A$ and choose $A = A(\sigma_0, E_{\LF})$ and $\eta = \eta(\sigma_0, E_{\LF})$ such that
\begin{equation}\label{eqn:EU}
A = A(\sigma_0, E_{\LF}) = \frac{1}{2}\left( \max\left\{ \sigma_0, 1 - E_{\LF}\right\} + \frac{1 + \sigma_0}{2}  \right)
\in \left(\sigma_0, \frac{1 + \sigma_0}{2} \right) \cap \left( 1 - E_{\LF}, 1 \right)
\end{equation}
and
\begin{equation}\label{eqn:eta}
\eta = \eta(\sigma_0, E_{\LF}) 
= \frac{1}{2} \min \left\{ \frac{1 - E_{\LF}}{2}, \frac{A ( \sigma_0, E_{\LF} ) - \sigma_0}{2},  1 + \sigma_0 - 2 A(\sigma_0, E_{\LF}) \right\} >0. 
\end{equation}
Put 
\[
d_1^{(1)} (\sigma_0, E_{\LF})
= \frac{\sigma_0}{\sigma_0 - \eta} \left( A - \sigma_0 - 2 \eta \right) > 0
\]
and
\begin{equation*}
B ( \sigma_0, E_{\LF} ) 
= \min\left\{ \frac{\sigma_0}{ \sigma_0 - \eta } \left( 1 + \sigma_0 - 2 A - \eta \right), \sigma_0 - \eta \right\} > 0.
\end{equation*}
If the estimate
\begin{equation}\label{eqn:X}
X \geq C^{(1)}(\LF, \sigma_0, N) 
\left( \| \underline{c} - \underline{\gamma} \| _N+1 \right)^{ 2 (d_1^{(1)} (\sigma_0, E_{\LF}) )^{-1}}
\end{equation}
holds with sufficiently large $C^{(1)}(\LF, \sigma_0, N)$ depending on $\LF, \sigma_0, N$, 
then there exists $\underline{\theta}^{(1)} = \underline{\theta}^{(1)} (\LF, \sigma_0, N, X)= (\theta_p^{(1)})_{p \in \mathcal{M}} \in \mathbb{R}^{\mathcal{M}}$ such that
\begin{equation}\label{eqn:AP1}
\left| \sum_{j = 0}^{ N-1 }\frac{\partial^k}{\partial s^k} \phi_{\mathcal{M}_j}(\sigma_0 , \underline{\theta}^{(1)}_j) - ( c_k - \gamma_k ) \right|
\ll_{\LF, \sigma_0, N}
X^{- B (\sigma_0, E_{\LF})} (\log X)^{N -1}
\end{equation}
holds for any $k = 0, 1, \ldots , N-1$.
\end{claim}

\begin{proof}[Proof of Claim \ref{claim:THE}]
Let X satisfy the bound \eqref{eqn:X}.
Then we have \eqref{eqn:sur2}.
By substituting \eqref{eqn:Y1}, 
the left hand side of \eqref{eqn:sur4} equals
\begin{equation}\label{eqn:sur5}
\frac{H}{Y^{\sigma_0 + 2 \eta} (\log Y)^{N +1}}
= \frac{ Y^{ A  -\sigma_0 - 2\eta  } }{ (\log Y)^{N +1} }
\asymp_{\LF, \sigma_0, N} 
\frac{X^{d_1^{(1)} (\sigma_0, E_{\LF})}}{ ( \log X )^{N +1}},
\end{equation}
and the estimate
\begin{equation}\label{eqn:R2}
R_{j,k}
\ll_{\LF, \sigma_0, N} X^{- B (\sigma_0, E_{\LF})} (\log X)^{N -1}
\end{equation}
holds by \eqref{eqn:R1}.
By the estimate \eqref{eqn:sur5}, 
we have \eqref{eqn:sur4}.
Hence Claim \ref{claim:SL} with the estimates \eqref{eqn:sur2} and \eqref{eqn:sur4} implies that there exists $\underline{\theta}^{(1)} = \underline{\theta}^{(1)} (\LF, \sigma_0, N, X)= (\theta_p^{(1)})_{p \in \mathcal{M}} \in \mathbb{R}^{\mathcal{M}}$ such that the system of equations \eqref{eqn:SE} holds.
Therefore, by \eqref{eqn:FSE}, \eqref{eqn:VDM}, \eqref{eqn:SE} and \eqref{eqn:R2}, 
we have the conclusion.
\end{proof}
\end{step}

\begin{step}
To finish the proof, we give some estimates.
Put 
\[
\delta_0 = \frac{1}{2} ( \sigma_0 - 1/2 ), \quad
l_0 = \frac{2}{\sigma_0 - 1/2}.
\]
Then we have
\[
\left| \frac{\partial^k}{\partial s^k} \log \LF_{p}(\sigma_0 , \theta_p) - \frac{\partial^k}{\partial s^k} \phi_{p}(\sigma_0 , \theta_p) \right|\\
\leq \sum_{l = 2}^\infty \frac{ l^{N-1} | b (p^l) | ( \log p )^{N-1} }{p^{l \sigma_0}}.
\]
for $k = 0, 1, \ldots, N -1$ and $\theta_p \in \mathbb{R}$.
By a calculation similar to (2.12) in \cite{NS2010}, 
we have
\[
\sum_{l = 2}^\infty \frac{ l^{N-1} | b (p^l) | ( \log p )^{N-1} }{p^{l \sigma_0}}
\ll_{\LF, \sigma_0, N} \frac{(\log p)^{N-1}}{p^{2( \sigma_0 - \delta_0 )}} + \frac{( \log p )^{N-1}}{p^{ l_0 ( \sigma_0 - 1/2 ) }}
\ll \frac{( \log p )^{N-1}}{p^{\sigma_0 + 1/2}} .
\]
Hence it holds that
\begin{equation}\label{eqn:AP2}
\left| \frac{\partial^k}{\partial s^k} \log \LF_{p}(\sigma_0 , \theta_p) - \frac{\partial^k}{\partial s^k} \phi_{p}(\sigma_0 , \theta_p) \right|
\ll_{\LF, \sigma_0, N} \frac{( \log p )^{N-1}}{p^{\sigma_0 + 1/2}}
\end{equation}
for $k = 0, 1, \ldots, N -1$ and $\theta_p \in \mathbb{R}$.

From now, let $Q > 2^{N } Y$ and let $X$ satisfy \eqref{eqn:X}.
Put
\[
\theta_p^{(\star)}
= \begin{cases}
    \theta_p^{(0)} & \text{if $p \in \mathcal{P} \setminus \mathcal{M}$},  \\
    \theta_p^{(1)}  & \text{if $p \in\mathcal{M} $}.
  \end{cases}
\]
Then we have the following estimates.
\begin{claim}\label{claim:EEA}
We have
\begin{equation}\label{eqn:EEA1}
\frac{\partial^k}{\partial s^k} \log \LF_{ \mathcal{P}(Q) \setminus \mathcal{M} }(\sigma_0 , \underline{\theta}^{(0)})
=\gamma_k + O_{\LF, \sigma_0, N} \left( X^{1/2 - \sigma_0}  ( \log X )^{N -1} \right),
\end{equation}
\begin{equation}\label{eqn:EEA2}
 \frac{\partial^k}{\partial s^k} \log \LF_{\mathcal{M}}(\sigma_0 , \underline{\theta}^{(1)}) - \frac{\partial^k}{\partial s^k} \phi_{ \mathcal{M}}(\sigma_0 , \underline{\theta}^{(1)}) 
\ll_{\LF, \sigma_0, N} X^{1/2 - \sigma_0} (\log X)^{N-1}
\end{equation}
hold for any $k = 0, 1, \ldots , N-1$.
\end{claim}
\begin{proof}[Proof of Claim \ref{claim:EEA}]
The estimate \eqref{eqn:EEA2} follows from the estimate \eqref{eqn:AP2}.
Next, we will show the estimate \eqref{eqn:EEA1}.
We can write
\begin{align*}
\frac{\partial^k}{\partial s^k} \log \LF_{ \mathcal{P}(Q) \setminus \mathcal{M} }(\sigma_0 , \underline{\theta}^{(0)})
= \gamma_k - \left( \sum_{p > Q} + \sum_{p \in \mathcal{M}} \right) \frac{\partial^k}{\partial s^k} \log \LF_{ p }(\sigma_0 , \underline{\theta}^{(0)}).
\end{align*}
By the estimate \eqref{eqn:AP2}, we have
\begin{align*}
&\left( \sum_{p > Q} + \sum_{p \in \mathcal{M}} \right) \left( \frac{\partial^k}{\partial s^k} \log \LF_{p}(\sigma_0 , \underline{\theta}^{(0)}) - \frac{\partial^k}{\partial s^k} \phi_{p}(\sigma_0 , \underline{\theta}^{(0)}) \right) \\
\ll&_{\LF, \sigma_0, N} \sum_{p >X} \frac{(\log p)^{N -1}}{p^{\sigma_0 + 1/2}}
\ll _N X^{1/2} ( \log X )^{N -1}.
\end{align*}
On the other hand, we have
\begin{align*}
&(-1)^k\sum_{p > Q} \frac{\partial^k}{\partial s^k} \phi_{p}(\sigma_0 , \underline{\theta}^{(0)})
= \sum_{p > Q} \frac{b (p) (\log p)^k \exp( - 2 \pi i \theta_p^{(0)} ) }{p^{\sigma_0}} \\
& = \left[ \left( \sum_{p \leq \xi} b(p) \exp( - 2 \pi i \theta_p^{(0)} )  \right) \frac{( \log \xi )^k}{\xi^{\sigma_0}} \right]_{\xi =  Q}^{\xi = \infty} \\
& \quad - \int_{Q}^{\infty} \left( \sum_{p \leq \xi} b(p) \exp( - 2 \pi i \theta_p^{(0)} )  \right) d \left( \frac{( \log \xi )^k}{\xi^{\sigma_0}} \right) \\
&\ll_{\LF, \sigma_0, N} Q^{1/2 - \sigma_0} ( \log Q )^{N -1}
\leq X^{1/2 - \sigma_0} ( \log X )^{N -1}
\end{align*}
by partial summation and the estimate \eqref{eqn:ALT}.
By a calculation similar to the above, 
we have
\begin{align*}
&(-1)^k \sum_{p \in \mathcal{M}_j} \frac{\partial^k}{\partial s^k} \phi_{ p}(\sigma_0 , \underline{\theta}^{(0)}) \\
=& \frac{b (p_{k_j}) (\log p_{k_j})^k \exp( - 2 \pi i \theta_{p_{k_j}}^{(0)} ) }{p^{\sigma_0}} 
+ \sum_{Y_j < p \leq Y_j + H} \frac{b (p) (\log p)^k \exp( - 2 \pi i \theta_p^{(0)} ) }{p^{\sigma_0}} \\
\ll&_{\LF, \sigma_0, N} X^{1/2 - \sigma_0} ( \log X )^{N -1}
\end{align*}
for any $j = 0, 1, \ldots, N -1$. 
This completes the proof.
\end{proof}
\end{step}

\begin{step}
We will finish the proof in this step. 
By Claim \ref{claim:EEA},
we have
\begin{align*}
&\frac{\partial^k}{\partial s^k} \log \LF_{ \mathcal{P}(Q) }(\sigma_0 , \underline{\theta}^{(\star)}) \\
&=\frac{\partial^k}{\partial s^k}\phi_{ \mathcal{M}}(\sigma_0 , \underline{\theta}^{(1)}_j) 
+ \left( \frac{\partial^k}{\partial s^k} \log \LF_{\mathcal{M}}(\sigma_0 , \underline{\theta}^{(1)}) - \frac{\partial^k}{\partial s^k} \phi_{ \mathcal{M}}(\sigma_0 , \underline{\theta}^{(1)}) \right)\\
&\quad \quad + \frac{\partial^k}{\partial s^k} \log \LF_{ \mathcal{P}(Q) \setminus \mathcal{M}}(\sigma_0 , \underline{\theta}^{(0)}) \\
&=c_k - \gamma_k + O_{\mathcal{L}, \sigma_0, N} \left( X^{- B (\sigma_0, E_{\LF})} (\log X)^{N -1} \right) \\
&\quad \quad  + \gamma_k + O_{\mathcal{L}, \sigma_0, N} \left(X^{1/2 - \sigma_0}  ( \log X )^{N -1}\right) \\
&= c_k + O_{\LF, \sigma_0, N} \left( X^{- \min\left\{ B(\sigma_0, E_{\LF}), \sigma_0 - 1/2 \right\}}  ( \log X )^{N -1} \right)
\end{align*}
for any $k = 0, 1, \ldots , N-1$.
Hence we have
\[
\left| \frac{\partial^k}{\partial s^k} \log \LF_{ \mathcal{P}(Q) }(\sigma_0 , \underline{\theta}^{(\star)}) - c_k \right|
\ll_{\LF, \sigma_0, N} X^{- \min\left\{ B(\sigma_0, E_{\LF}), \sigma_0 - 1/2 \right\}}  ( \log X )^{N -1}
\]
for any $k = 0, 1, \ldots , N-1$.
Putting 
\[
d_1(\sigma_0, E_{\LF})
= \frac{2 \sigma_0}{\sigma_0 - \eta} \max\left\{ (d_1^{(1)} (\sigma_0, E_{\LF}) )^{-1}, \left(\min\left\{  B(\sigma_0, E_{\LF}), \sigma_0 - 1/2 \right\} \right)^{-1} \right\},
\]
letting $C (\LF,  \sigma_0, N ) $ be sufficiently large depending on $\LF,  \sigma_0, N$,
and using 
\[
\| \underline{c} - \underline{\gamma} \| + 1/ \epsilon  \ll_{\LF} \| \underline{c} \| + 1/\epsilon,
\]
we have the conclusion.
\end{step}
\vspace{-4mm}
\end{proof}

\begin{proof}[Proof of Theorem \ref{thm:MTH}]
We divide the proof into several steps.
\begin{stepA} 
First we will give settings and mention the strategy of the proof. 
Let $Q$ satisfy 
\[
Q> C_1^{(1)} ( \LF, \sigma_0, N ) \left( \| \underline{c} \| + 1 / \varepsilon  \right)^{d_1(\sigma_0, E_{\LF})},
\] 
where $C_1^{(1)} ( \LF, \sigma_0, N )$ is a sufficiently large constant depending on $\LF,  \sigma_0, N$ and satisfying $C_1^{(1)} ( \LF, \sigma_0, N ) \geq 2^{8/(\sigma_0 - 1/2)}$.
Let $\underline{\theta}^{(\star)} = (\theta_p^{(\star)})_{p \in \mathcal{P}(Q)} \in \mathbb{R}^{\mathcal{P}(Q)}$ be as in Lemma \ref{lem:AP} and put $\delta = Q^{-1}$.
We put
\[
\mathcal{I}
= \int_{D_T} \sum_{k = 0}^{N -1} \Phi_Q \left( \underline{\gamma}(t) - \underline{\theta}^{(\star)} \right)  \left| \left( \log \LF ( \sigma_0 + i t )  \right)^{(k)} - \left( \log \LF_{\mathcal{P}(Q)} ( \sigma_0 + i t )  \right)^{(k)} \right|^2 dt,
\]
where $\Phi_Q (\underline{\theta})$ is the mollifier defined in subsection \ref{sbs:MOF}, 
\[
\underline{\gamma}(t)
=\left( \frac{ \log p }{2 \pi} t  \right)_{p \in \mathcal{P}(Q)} \in \mathbb{R}^{\mathcal{P}(Q)},
\]
and $D_T$ is the subset of $[T,2T]$ which is defined as follows.
For each nontrivial zeros $\rho = \beta + i \gamma$ of $\LF(s)$, we define
\[
P^{(h)}_\rho
=\left\{ s = \sigma + it  ~;~ (1/2)(\sigma_{\LF} + \sigma_0) \leq \sigma \leq 15,\quad | t - \gamma | \leq h \right\}
\]
with the positive parameter $10 \leq h <T$, and we put
\[
D_T
=D_T (h)
=\left\{ t \in [T, 2T] ~;~ \sigma_0 + it \not \in \bigcup_{\rho~;~\beta>(1/2)(\sigma_{\LF} + \sigma_0) } P^{(h)}_\rho \right\}.
\]

Now, we mention the strategy of the proof.
We want to choose $T$ depending on $\LF, \sigma_0, N, \underline{c}, \epsilon$ and choose $Q$ and $h$ depending on $T$ so that
\begin{equation}\label{eqn:ST1}
\mathcal{I} 
\leq \left( \frac{\epsilon}{3} \right)^2 \int_{D_T} \Phi_Q \left( \underline{\gamma}(t) - \underline{\theta}^{(\star)} \right) dt,
\end{equation}
\begin{equation}\label{eqn:ST2}
\int_{D_T} \Phi_Q \left( \underline{\gamma}(t) - \underline{\theta}^{(\star)} \right) dt \geq \frac{T}{2}, 
\end{equation}
and 
\begin{equation}\label{eqn:ST3}
\left| \frac{\partial^k}{\partial s ^k} \log \LF_{\mathcal{P}(Q)} \left( \sigma_0, \underline{\theta} \right) - \frac{\partial^k}{\partial s ^k} \log \LF_{\mathcal{P}(Q)} \left( \sigma_0, \underline{\theta}^{(\star)} \right)\right| 
< \frac{\epsilon}{3}
\end{equation}
for $ | \theta_p - \theta_p^{(\star)} | < \delta $, $p \leq Q$ and $k = 0, 1, \ldots, N-1$.
Once we have such choices, there exists $t_0 \in [T, 2T]$ such that
\[
\left|\frac{\partial^k}{\partial s ^k} \log \LF ( \sigma_0 + i t_0 ) - \frac{\partial^k}{\partial s ^k} \log \LF_{\mathcal{P}(Q)} ( \sigma_0 + i t_0 ) \right|
\leq \frac{\epsilon}{3}
\]
for any $k = 0, 1, \ldots , N-1$ and $\int_{D_T} \Phi_Q \left( \underline{\gamma}(t_0) - \underline{\theta}^{(\star)} \right) dt > 0$.
By \eqref{eqn:ST3} and \eqref{eqn:SUPP} and by noting the equation $\log \LF_{\mathcal{P}(Q)} ( \sigma_0 + i t_0 ) = \log \LF_{\mathcal{P}(Q)} \left( \sigma_0, \underline{\gamma}(t_0) \right)$,
we have
\[
\left| \frac{\partial^k}{\partial s ^k} \log \LF_{\mathcal{P}(Q)} \left( \sigma_0 + i t_0 \right) - \frac{\partial^k}{\partial s ^k} \log \LF_{\mathcal{P}(Q)} \left( \sigma_0, \underline{\theta}^{(\star)} \right)\right| 
< \frac{\epsilon}{3}
\]
for any $k = 0, \ldots, N -1$ by substituting $\underline{\theta} = \underline{\gamma}(t_0)$.
These estimates and Lemma \ref{lem:AP} give the inequalities 
\[
\left| \frac{d^k}{d s^k} \log \LF (\sigma_0 + i t_0) - c_k \right| < \varepsilon \quad \textrm{for $k = 0, 1, \ldots, N -1$}.
\]
\end{stepA}

\begin{stepA}
We give a certain estimate toward the estimate \eqref{eqn:ST3}.
By using the estimates $| e^{i \alpha} - 1 | \leq | \alpha |$ for $\alpha \in \mathbb{R}$ and $b(p^l) \ll_{\LF} p^{l/2}$,
we have
\begin{align}
&\left| \frac{\partial^k}{\partial s ^k} \log \LF_{\mathcal{P}(Q)} \left( \sigma_0, \underline{\theta} \right) - \frac{\partial^k}{\partial s ^k} \log \LF_{\mathcal{P}(Q)} \left( \sigma_0, \underline{\theta}^{(\star)} \right)\right|
\ll \sum_{p\leq Q}\sum_{l =1}^\infty \frac{ l^{k +1} ( \log p )^k | b( p^l ) | }{p^{l \sigma_0}} \delta  \label{eqn:APX} \\ 
&\ll_{\LF, \sigma_0, N} Q^{-1} \sum_{p \leq Q} \frac{ ( \log p )^{N -1} }{p^{ \sigma_0 - 1/2 }} 
\ll_{\sigma_0, N} Q^{ 1/2 - \sigma_0 } ( \log Q )^{N -1 } \nonumber
\end{align}
for $ | \theta_p - \theta_p^{(\star)} | < \delta $, $p \leq Q$ and $k = 0, 1, \ldots, N-1$.
\end{stepA}

\begin{stepA}
We will estimate $\mathcal{I}$.
To estimate $\mathcal{I}$, we use the formula \eqref{eqn:SF} in Lemma \ref{thm:SF}.
First we will give the formula similar to \eqref{eqn:SF} for $\log \LF(s) $ .
Put
\[
F(s, z) = \int_{s+10}^s \frac{ x^{z-w} - x^{2(z-w)} }{(w-z)^2}dw.
\]
Integrating \eqref{eqn:SF}, we obtain
\begin{align}
&\log \LF(s) 
= \log \LF (s+10) 
+ \sum_{n \leq x^2}\frac{\Lambda_{\LF,x}(n)}{n^s \log n}
- \sum_{n \leq x^2}\frac{\Lambda_{\LF,x}(n)}{n^{s + 10} \log n} \nonumber \\
&\quad - \frac{m_\mathcal{L}}{\log x} F(s, 1) - \frac{m_\mathcal{L}}{\log x} F ( s, 0 )
+ \frac{1}{\log x}\sum_{\rho}F(s, \rho) 
+ \frac{1}{\log x} \sum_{j =1}^f \sum_{r =0}^\infty F(s, - q_j(r)) \nonumber \\
=& \sum_{n \leq x^2}\frac{\Lambda_{\LF,x}(n)}{n^s \log n} + \sum_{n > x} \left( \Lambda_{\LF}(n) - \Lambda_{\LF,x}(n) \right) \frac{1}{n^{s + 10} \log n} \label{eqn:logLF} \\
&\quad - \frac{m_\mathcal{L}}{\log x} F(s, 1) - \frac{m_\mathcal{L}}{\log x} F ( s, 0 )
+ \frac{1}{\log x}\sum_{\rho}F(s, \rho) 
+ \frac{1}{\log x} \sum_{j =1}^f \sum_{r =0}^\infty F(s, - q_j(r)) \nonumber
\end{align}
if $t$ is not equal to $0$ and the imaginary part of any zero of $\LF(s)$.
Let $Q < x\leq T$ and put
\[
\mathcal{I}_k
=\int_{D_T} \Phi_Q \left( \underline{\gamma}(t) - \underline{\theta}^{(\star)} \right)  \left| \left( \log \LF ( \sigma_0 + i t )  \right)^{(k)} - \left( \log \LF_{\mathcal{P}(Q)} ( \sigma_0 + i t )  \right)^{(k)} \right|^2 dt.
\]
We will estimate $\mathcal{I}_k$ for $k = 1, \ldots , N -1$.
For $k = 0$, we have the same upper bound by using the formula \eqref{eqn:logLF}.
For any $k = 1, 2, \ldots, N-1$, the estimate
\[
\mathcal{I}_k
\ll \mathcal{A}_k + \mathcal{B}_k + \mathcal{C}_k + \mathcal{D}_k + \mathcal{E}_k+ \mathcal{F}_k,
\]
holds, where
\begin{align*}
\mathcal{A}_k 
&= \int_{D_T} \Phi_Q \left( \underline{\gamma}(t) - \underline{\theta}^{(\star)} \right) \left| \left(\sum_{n \leq x^2}\frac{\Lambda_{\LF,x}(n)}{n^{s_0}}\right)^{(k-1)} - \left(\sum_{n \leq Q}\frac{\Lambda_{\LF}(n)}{n^{s_0}}\right)^{(k-1)} \right|^2 dt,\\
\mathcal{B}_k
&=\int_{D_T} \Phi_Q \left( \underline{\gamma}(t) - \underline{\theta}^{(\star)} \right) \left| \left( \sum_{p \leq Q} \sum_{l  > \frac{\log Q}{ \log p} } \frac{\Lambda_{\LF}(p^l)}{p^{l s_0}} \right)^{(k-1)} \right|^2 dt,\\
\mathcal{C}_k
&=\frac{1}{(\log x)^2}\int_{D_T} \Phi_Q \left( \underline{\gamma}(t) - \underline{\theta}^{(\star)} \right) \times \\
&\quad \quad \times \left| \left( \sum_{j =1}^f \sum_{r =0}^\infty \frac{ x^{- q_j(r) - s_0 } - x^{ - 2 (  q_j(r)  + s_0 ) } }{( q_j(r) + s_0 )^2} \right)^{(k-1)} \right|^2 dt,\\
\mathcal{D}_k
&=\frac{m_{\LF}^2}{(\log x)^2}\int_{D_T} \Phi_Q \left( \underline{\gamma}(t) - \underline{\theta}^{(\star)} \right) \left| \left( \frac{x^{2(1- s_0)} - x^{1- s_0} }{(1-s_0)^2 } \right)^{(k-1)} \right|^2 dt,\\
\mathcal{E}_k
&=\frac{m_{\LF}^2}{(\log x)^2}\int_{D_T} \Phi_Q \left( \underline{\gamma}(t) - \underline{\theta}^{(\star)} \right) \left| \left( \frac{x^{-2 s_0} - x^{- s_0 }}{ s_0 ^2 }  \right)^{(k-1)} \right|^2 dt,\\
\mathcal{F}_k
&=\frac{1}{(\log x)^2}\int_{D_T} \Phi_Q \left( \underline{\gamma}(t) - \underline{\theta}^{(\star)} \right) \left| \left( \sum_{\rho}\frac{x^{\rho - s_0} - x^{ 2 ( \rho - s_0 ) }}{(\rho - s_0)^2} \right)^{(k-1)} \right|^2 dt,
\end{align*}
and $s_0 = \sigma_0 + it.$

{\it Bound for $\mathcal{A}_k$.}
We can write
\begin{align*}
&\sum_{Q < n \leq x^2} \frac{\Lambda_{\LF,x}(n) (\log n)^{k -1}}{n^{s_0}}\\
=& \sum_{Q < p \leq x^2} \frac{\Lambda_{\LF,x}(p) (\log p)^{k -1}}{p^{s_0}}
+ \sum_{\substack{Q< n \leq x^2 ;\\ n = p^l, l \geq 2}} \frac{\Lambda_{\LF,x}(p^l) (\log p^l)^{k -1}}{p^{l s_0}},
\end{align*}
and
\begin{align}
&\sum_{\substack{Q< n \leq x^2 ;\\ n = p^l, l \geq 2}} \frac{\Lambda_{\LF,x}(p^l) (\log p^l)^{k -1}}{p^{l s_0}} \nonumber \\
\ll& \sum_{p \leq \sqrt{Q}} \sum_{l > \frac{\log Q}{\log p}}\frac{ | \Lambda_{\LF,x} (p^l)| (\log p^l)^{k -1}}{p^{l \sigma_0}}
+ \sum_{p>\sqrt{Q}} \sum_{l=2}^\infty \frac{ | \Lambda_{\LF,x}(p^l)| (\log p^l)^{k -1}}{p^{l \sigma_0}}.\label{eqn:EA2}
\end{align}
Note that
\begin{equation}\label{eqn:EP}
\sum_{l > X}\frac{l^k}{p^{l \sigma}}
\ll_{\sigma, N} \frac{X^{k}}{p^{X \sigma}}.
\end{equation}
holds for $X \geq 1$, $\sigma>0$ and for $k = 0, 1, \ldots N-1$.
We will estimate the first term of \eqref{eqn:EA2}.
By using the estimate $b(p^l) \ll_{\LF} p^{l/2}$ and \eqref{eqn:EP}, 
we have
\begin{equation}\label{eqn:EA21}
\sum_{l > \frac{\log Q}{\log p}}\frac{ | \Lambda_{\LF,x} (p^l)| (\log p^l)^{k -1}}{p^{l \sigma_0}}
= \sum_{l > \frac{\log Q}{\log p}} \frac{ | b(p^l) | ( \log p^l )^k }{p^{l \sigma_0}}
\ll_{\LF, \sigma_0, N} Q^{1/2 - \sigma_0} ( \log Q )^{N -1}
\end{equation}
for any $p \leq \sqrt{Q}$ and $k = 0, 1, \ldots, N-1$.
Put\[
\eta_0
= \frac{\sigma_0 - 1/2}{4}.
\]
Since Lemma \ref{lem:CS} (ii) yields the estimate
\[
b(p^l) 
\ll_{\LF, \sigma_0} ( 2^l - 1) \frac{p^{\eta_0 l}}{l}
\leq \frac{p^{\left( \eta_0 + \frac{\log 2}{\log p} \right) l}}{l},
\]
we have, by the estimate \eqref{eqn:EP},
\begin{align}
&\sum_{l > \frac{\log Q}{\log p}} \frac{ | b(p^l) | ( \log p^l )^k }{p^{l \sigma_0}}
\ll_{\LF, \sigma_0} (\log p)^k \sum_{l > \frac{\log Q}{\log p}} \frac{l^{k-1}}{ p^{ ( \sigma_0 - \eta_0 - \frac{\log 2}{\log p} ) l  } }\nonumber \\
\leq& (\log p)^k \sum_{l > \frac{\log Q}{\log p}} \frac{l^{k -1}}{ p^{(\sigma_0 - 1/2(\sigma_0 - 1/2)) l } } 
\ll_{\sigma_0, N} Q^{- \sigma_0 - 1/2(1/2 - \sigma_0)} (\log Q)^{N -1}\label{eqn:EA22}
\end{align}
for $p \geq 2^{4/(\sigma_0 - 1/2)}$.
By the estimate \eqref{eqn:EA21} and \eqref{eqn:EA22}, 
we have
\begin{align}
&\sum_{p \leq \sqrt{Q}} \sum_{l > \frac{\log Q}{\log p}}\frac{| \Lambda_{\LF,x}(p^l) | (\log p^l)^{k -1}}{p^{l \sigma_0}} \nonumber \\ 
=& \left( \sum_{2 \leq p < 2^{\frac{4}{\sigma_0 - 1/2 }}}  + \sum_{ 2^{\frac{4}{\sigma_0 - 1/2 }} \leq p \leq \sqrt{Q} } \right) \sum_{l > \frac{\log Q}{\log p}}\frac{| \Lambda_{\LF,x}(p^l) | (\log p^l)^{k -1}}{p^{l \sigma_0}} \nonumber \\ 
\ll&_{\LF, \sigma_0, N} Q^{1/2 - \sigma_0} ( \log Q )^{N -1}
+ \sum_{p \leq \sqrt{Q}} Q^{- \sigma_0 - 1/2(1/2 - \sigma_0)} (\log Q)^{N -1} \nonumber \\
\ll& Q^{1/2(1/2 - \sigma_0)} ( \log Q )^{N -1}. \nonumber
\end{align}
As for the second term of \eqref{eqn:EA2},
by an argument similar to \eqref{eqn:EA22},
we have
\begin{align*}
\sum_{p>\sqrt{Q}} \sum_{l=2}^\infty \frac{| \Lambda_{\LF,x}(p^l) | (\log p^l)^{k -1}}{p^{l \sigma_0}}
\ll_{\LF, \sigma_0, N}Q^{ (3/4) (1/2 - \sigma_0) } (\log Q)^{N -1}. 
\end{align*}
Hence we have
\begin{equation}\label{eqn:FBK}
\sum_{\substack{Q< n \leq x^2 ;\\ n = p^l, l \geq 2}} \frac{\Lambda_{\LF,x}(p^l) (\log p^l)^{k -1}}{p^{l s_0}} 
\ll_{\LF, \sigma_0, N} Q^{ 1/2(1/2 - \sigma_0) } (\log Q)^{N -1}. 
\end{equation}
Therefore, we have
\begin{align*}
\mathcal{A}_k
&\ll_{\LF, \sigma_0, N} \int_{D_T} \Phi_Q \left( \underline{\gamma}(t) - \underline{\theta}^{(\star)} \right) \left| \sum_{Q < p \leq x^2} \frac{\Lambda_{\LF,x}(p) (\log p)^{k -1}}{p^{s_0}} \right|^2 dt\\
&\quad  + Q^{1/2 - \sigma_0} (\log Q)^{2N -2} \int_{D_T} \Phi_Q \left( \underline{\gamma}(t) - \underline{\theta}^{(\star)} \right) dt
=: \mathcal{A}_k^{(1)} + \mathcal{A}_k^{(2)}.
\end{align*}
By the formula \eqref{eqn:MOL} with $\beta_{\underline{n}} := \beta_{\underline{n}} ( \underline{\theta}^{(\star)} )$, 
we have
\begin{align*}
&\mathcal{A}_k^{(1)} \\
\ll& \sum_{ \substack{ \underline{n} = (n_q)_{q \in \mathcal{P}(Q)} \in \mathbb{Z}^{\mathcal{P}(Q)} ; \\ \max_{q \in \mathcal{P}(Q)} |n_q| \leq M }} |\beta_{\underline{n}}| \left| \int_T^{2T} \exp\left( i t \sum_{q \in \mathcal{P}(Q)} n_q \log q \right) \left| \sum_{Q < p \leq x^2} \frac{\Lambda_{\LF,x}(p) (\log p)^{k -1}}{p^{s_0}} \right|^2 dt \right| \\
&+  \frac{1}{M} \exp( C_1 Q ) \int_T^{2T} \left| \sum_{Q < p \leq x^2} \frac{\Lambda_{\LF,x}(p) (\log p)^{k -1}}{p^{s_0}} \right|^2 dt \\
\ll& \int_T^{2T} \left| \sum_{Q < p \leq x^2} \frac{\Lambda_{\LF,x}(p) (\log p)^{k -1}}{p^{s_0}} \right|^2 dt \\
+& \sum_{ \substack{ \mathbf{0} \neq  \underline{n}  = (n_q)_{q \in \mathcal{P}(Q)} \in \mathbb{Z}^{\mathcal{P}(Q)} ; \\\max_{q \in \mathcal{P}(Q)} |n_q| \leq M }} |\beta_{\underline{n}}| \left| \int_T^{2T} \exp\left( i t \sum_{q \in \mathcal{P}(Q)} n_q \log q \right) \left| \sum_{Q < p \leq x^2} \frac{\Lambda_{\LF,x}(p) (\log p)^{k -1}}{p^{s_0}} \right|^2 dt \right| \\
&+  \frac{1}{M} \exp( C_1 Q ) \int_T^{2T} \left| \sum_{Q < p \leq x^2} \frac{\Lambda_{\LF,x}(p) (\log p)^{k -1}}{p^{s_0}} \right|^2 dt
=: \mathcal{A}_k^{(1,1)} + \mathcal{A}_k^{(1,2)} + \mathcal{A}_k^{(1,3)}.
\end{align*}
By using the estimates $| b(p) | = | a (p) | \ll_{\LF, \sigma_0} p^{\eta_0}$ and 
\[
\sum_{1 \leq n_1 \leq n_2 \leq T} \frac{1}{ n_1^{\alpha} n_2^{\alpha}  \log (n_2/n_1) }
\ll_{\alpha} T^{2 - 2\alpha} \log T
\quad \textrm{for} \quad 1/2 \leq \alpha < 1,
\]
we have
\begin{align*}
&\mathcal{A}_k^{(1,1)} \\
\leq& \sum_{Q < p \leq x^2} \frac{| b ( p ) |^2 ( \log p )^{2k} }{p^{2 \sigma_0}} T
+ O \left( \sum_{Q < p_1 <  p_2 \leq x^2} \frac{ | b(p_1) |  | b( p_2 ) | ( \log p_1 )^{k} ( \log p_2 )^{k} }{ p_1^{\sigma_0} p_2^{\sigma_0} \log (p_2 / p_1) }\right) \\
\ll&_{\LF, \sigma_0, N} Q^{(3/2) \left( 1/2 - \sigma_0 \right)} ( \log Q )^{ 2N - 2 } T 
+ x^{2} ( \log x )^{2N -1}.
\end{align*}
We also have
\begin{align*}
&\mathcal{A}_k^{(1,3)}
= \frac{1}{M} \exp( C_1 Q ) \mathcal{A}_k^{(1,1)}\\
\ll&_{\LF, \sigma_0, N} \frac{1}{M} \exp( C_1 Q ) \left( Q^{(3/2) \left( 1/2 - \sigma_0 \right)} ( \log Q )^{ 2N - 2 } T 
+ x^{2} ( \log x )^{2N -1} \right).
\end{align*}
We will estimate $\mathcal{A}_k^{(1,2)}$.
Fix $\mathbf{0} \neq \underline{n} = (n_q)_{q \in \mathcal{P}(Q)} \in \mathbb{Z}^{\mathcal{P}(Q)} $ with $\max_{q \in \mathcal{P}(Q)} |n_q| \leq M $. 
Then we have
\begin{align*}
&\left| \int_T^{2T} \exp\left( i t \sum_{q \in \mathcal{P}(Q)} n_q \log q \right) \left| \sum_{Q < p \leq x^2} \frac{\Lambda_{\LF,x}(p) (\log p)^{k -1}}{p^{s_0}} \right|^2 dt \right|\\
\leq& \sum_{Q < p \leq x^2} \frac{| b ( p ) |^2 ( \log p )^{2k} }{p^{2 \sigma_0}} \left| \int_T^{2T} \exp\left( i t \sum_{q \in \mathcal{P}(Q)} n_q \log q \right) dt \right| \\
+&O\Bigg{(} \sum_{Q < p_1 <  p_2 \leq x^2} \frac{ | b(p_1) | | b(p_2) | (\log p
_1)^k ( \log p_2 )^k }{p_1^{\sigma_0} p_2^{\sigma_0} } \times \\
& \quad \times \left| \int_T^{2T} \exp\left( i t \sum_{q \in \mathcal{P}(Q)} n_q \log q \right)  \left( \frac{p_2}{p_1} \right)^{it} dt \right| \Bigg{)} 
=: \widetilde{\mathcal{A}}_k^{(1,2,1)}(\underline{n}) + \widetilde{\mathcal{A}}_k^{(1,2,2)}(\underline{n})
\end{align*}
holds.
We will estimate
\[
\int_T^{2T} \exp\left( i t \sum_{q \in \mathcal{P}(Q)} n_q \log q \right) dt.
\]
Note that $\sum_{q \in \mathcal{P}(Q)} n_q \log q \neq 0.$
Put
\[
\mathcal{Q}^+(\underline{n})
=\{ q \in \mathcal{P}(Q) ~;~ n_q \geq 0 \}, \quad
\mathcal{Q}^- (\underline{n})
=\{ q \in \mathcal{P}(Q) ~;~ n_q < 0 \},
\]
and
\[
Q^+(\underline{n})
= \prod_{q \in \mathcal{Q}^+(\underline{n})} q^{n_q}, \quad
Q^-(\underline{n})
= \prod_{q \in \mathcal{Q}^-(\underline{n})} q^{- n_q}.
\]
Then it holds that
\[
\sum_{q \in \mathcal{P}(Q)} n_q \log q
= \log \left( \frac{Q^+(\underline{n})}{ Q^-(\underline{n}) } \right). 
\]
By the asymptotic formula $\sum_{p \leq X} \log p \sim X$ as $X \rightarrow \infty$,
we have
\begin{equation}
\max\{ Q^+(\underline{n}), Q^-(\underline{n}) \}
\leq \exp\left( M \sum_{q \in \mathcal{P}(Q)} \log q \right)
\leq \exp\left( C_2 Q M \right)
\end{equation}
for some absolute positive constant $C_2$.
Hence, since it holds that
\[
|\log (n/m) | > \frac{1}{\max\{m,n\}}
\]
for any distinct positive integers $m$ and $n$,
we have
\begin{equation}\label{eqn:EOD}
\int_T^{2T} \exp\left( i t \sum_{q \in \mathcal{P}(Q)} n_q \log q \right) dt
\ll \exp\left( C_2 Q M \right) . 
\end{equation}
Therefore we have
\begin{equation}\label{eqn:EA121}
\widetilde{\mathcal{A}}_k^{(1,2,1)}(\underline{n})
\ll_{\sigma_0, N}  Q^{(3/2) \left( 1/2 - \sigma_0 \right)} ( \log Q )^{ 2N - 2 }  \exp\left( C_2 Q M \right). 
\end{equation}
By calculations similar to \eqref{eqn:EA121},
we have
\begin{align*}
&\widetilde{\mathcal{A}}_k^{(1,2,2)}(\underline{n})\\
\ll& \sum_{Q < p_1 <  p_2 \leq x^2} \frac{ | b(p_1) | | b(p_2) | (\log p
_1)^k ( \log p_2 )^k }{p_1^{\sigma_0} p_2^{\sigma_0} } \left| \int_T^{2T}\left( \frac{  p_2 Q^+(\underline{n}) }{ p_1 Q^-(\underline{n}) } \right)^{it} dt \right| \\
\ll& \sum_{Q < p_1 <  p_2 \leq x^2} \frac{ | b(p_1) | | b(p_2) | (\log p
_1)^k ( \log p_2 )^k }{p_1^{\sigma_0} p_2^{\sigma_0} } \max\{ p_2 Q^+(\underline{n}), p_1 Q^-(\underline{n}) \} \\
\ll&_{\LF, \sigma_0, N} x^{4} ( \log x )^{2N - 2} \exp\left( C_2 Q M \right).
\end{align*}
Hence we have, by the estimate \eqref{eqn:EB},
\begin{align*}
\mathcal{A}_k^{(1,2)} 
\leq& \sum_{ \substack{ \mathbf{0} \neq  \underline{n} = (n_q)_{q \in \mathcal{P}(Q)} \in \mathbb{Z}^{\mathcal{P}(Q)} ; \\ \max_{q \in \mathcal{P}(Q)} |n_q| \leq M }} |\beta_{\underline{n}}| \left(\tilde{\mathcal{A}}_k^{(1,2,1)}(\underline{n}) + \tilde{\mathcal{A}}_k^{(1,2,2)}(\underline{n}) \right) \\
\ll&_{\LF, \sigma_0, N} \left( \sum_{ \substack{ \mathbf{0} \neq  \underline{n} \in \mathbb{Z}^{\mathcal{P}(Q)} ; \\ \max_{q \in \mathcal{P}(Q)} |n_q| \leq M }} |\beta_{\underline{n}}| \right) x^{4} ( \log x )^{2N - 2} \exp\left( C_2 Q M \right)\\
\ll& x^{4} ( \log x )^{2N - 2} \exp(C_0 Q) \exp\left( C_2 Q M \right).
\end{align*}
Therefore we have
\begin{align*}
&\mathcal{A}_k^{(1)}\\
\ll&_{\LF, \sigma_0, N} \left( 1 + \frac{1}{M}\exp( C_1 Q )  \right) \left( Q^{(3/2)(1/2 - \sigma_0)} ( \log Q )^{ 2N - 2 } T + x^{2} ( \log x )^{2N -1} \right)\\
&+ x^{4} ( \log x )^{2N - 2} \exp(C_0 Q) \exp\left( C_2 Q M \right) \\
\ll& Q^{(3/2)(1/2 - \sigma_0)} ( \log Q )^{ 2N - 2 } T
+ x^{4} ( \log x )^{2N - 2} \exp \exp\left( C_3 Q \right),
\end{align*}
where we take $M = \exp( 2 C_1 Q )$, and $C_3 > 2 C_1$ is a positive absolute constant.
Therefore we have
\begin{align*}
\mathcal{A}_k 
\ll_{\LF, \sigma_0, N}& Q^{(3/2)(1/2 - \sigma_0)} ( \log Q )^{ 2N - 2 } T \\
&+ x^{4} ( \log x )^{2N - 2} \exp \exp\left( C_3 Q \right) \\
&\quad + Q^{1/2 - \sigma_0} (\log Q)^{2N -2} \int_{D_T} \Phi_Q \left( \underline{\gamma}(t) - \underline{\theta}^{(\star)} \right)  dt.
\end{align*}

{\it Bound for $\mathcal{B}_k$.}
By calculations similar to \eqref{eqn:FBK}, we have
\[
\sum_{p \leq Q} \sum_{l  > \frac{\log Q}{ \log p} } \frac{b(p^l) ( \log p^l )^k}{p^{l s_0}}
\ll_{\LF, N, \sigma_0} Q^{ 1/2 (1/2 - \sigma_0) } (\log Q)^{N -1}.
\]
Hence we have
\begin{align*}
\mathcal{B}_k
\ll_{\LF, N, \sigma_0}  Q^{ 1/2 - \sigma_0  } (\log Q)^{2N -2} \int_{D_T} \Phi_Q \left( \underline{\gamma}(t) - \underline{\theta}^{(\star)} \right)  dt.
\end{align*}

We will estimate $\mathcal{C}_k$, $\mathcal{D}_k$, $\mathcal{E}_k$, $\mathcal{F}_k$ using the following bound.
We define
\begin{align*}
&\widetilde{D}_T
= \widetilde{D}_T (h) \\
=& \Bigg{\{} s ~;~ \sigma \geq \sigma_0 - 1/10( \sigma_0 - \sigma_{\LF} ), T - 1 \leq t \leq 2 T + 1, s \not \in\bigcup_{\rho~;~\beta>1/2(\sigma_{\LF} + \sigma_0) } P^{(h-1)}_\rho \Bigg{\}}.
\end{align*}
\begin{center}
\begin{tikzpicture}[scale=1]
\draw[thick] (0, -4) -- (0, 4) ; 
\draw (0, -4) node[below]{$ \sigma_{\mathcal{L}}$} ;
\draw[thick] (-1, 0) -- (7, 0) ; 
\draw[thick] (1, -4) -- (1, 4) ; 
\draw[thick] (2, -4) -- (2, 4) ; 
\draw (2, -4) node[below right]{$ \sigma_0$} ;
\draw[thick] (1.8, -4) -- (1.8, 4) ; 
\draw[thick] (0, 2) -- (7 ,2) ; 
\draw[thick] (0, 1.8) -- (7 ,1.8) ; 
\draw[thick] (0, -2) -- (7 ,-2) ; 
\draw[thick] (0, -1.8) -- (7 ,-1.8) ; 
\begin{scope}
\path[clip] (1, -2) -- (1, 2) -- (7, 2) -- (7, -2) -- cycle;
\foreach \t in {1,2,...,100}{
    \path[draw] (0.25*\t -3 , -2) -- (0.25*\t + 0.4 -3, 2);
    }
\end{scope}
\fill[gray, opacity=.1] (1.8, 1.8) -- (7, 1.8) -- (7, 4) -- (1.8, 4) ;
\fill[gray, opacity=.1] (1.8, -1.8) -- (7, -1.8) -- (7, -4) -- (1.8, -4) ;
\draw (0, 2) to [out= 255, in = 105] (0,0);
\draw (0, 0) to [out= 255, in = 105] (0,-2);
\draw (-0.5,1) node{$h$};
\draw (-0.5,-1) node{$h$};
\draw (5,3) node{$\widetilde{D}_T$};
\fill[white] (4.2, 0.6) -- (4.9, 0.6) -- (4.9, 1.4) -- (4.2, 1.4) ;
\draw (4.5,1) node{$P_{\rho}^{(h)}$};
\end{tikzpicture}
\end{center}
Then, by using the Cauchy integral formula, the estimate
\begin{equation}\label{eqn:CF}
| g^{(k -1)}(s_0) |
=\left| \frac{(k  -1)!}{2 \pi i} \int_{| z - s_0 | = 1/10( \sigma_0 - \sigma_{\LF} )} \frac{g (z)}{ ( z - s_0 )^k } d z \right| 
\ll_{\LF, \sigma_0, N} \sup_{z \in \widetilde{D}_T } | g (z) |
\end{equation}
holds for any holomorphic function $g(s)$ on $\widetilde{D}_T$ and $t \in D_T$ and for any $k = 1, \ldots, N -1$.

{\it Bound for $\mathcal{C}_k$, $\mathcal{D}_k$, $\mathcal{E}_k$}.
By using the estimate \eqref{eqn:CF}, we can easily check that
\begin{align*}
&\mathcal{C}_k
\ll_{\LF, \sigma_0, N} \frac{1}{T^4 x^{2 A(\sigma_0)} (\log x)^2} \int_{D_T} \Phi_Q \left( \underline{\gamma}(t) - \underline{\theta}^{(\star)} \right) dt,\\
&\mathcal{D}_k
\ll_{\LF, \sigma_0, N}
\frac{ x^{4 (1- A(\sigma_0))}}{T^4(\log x)^2} \int_{D_T} \Phi_Q \left( \underline{\gamma}(t) - \underline{\theta}^{(\star)} \right) dt,\\
&\mathcal{E}_k
\ll_{\LF, \sigma_0, N}
\frac{1}{T^4 x^{2 A(\sigma_0)} (\log x)^2} \int_{D_T} \Phi_Q\left( \underline{\gamma}(t) - \underline{\theta}^{(\star)} \right) dt,
\end{align*}
where $A(\sigma_0) = \sigma_0 - 1/10( \sigma_0 - \sigma_{\LF} )$.

{\it Bound for $\mathcal{F}_k$}.
We will estimate
\[
\sup_{s \in \widetilde{D}_T }  \left|  \sum_{\rho}\frac{x^{\rho - s} - x^{ 2 ( \rho - s ) }}{(\rho - s)^2} \right|^2.
\]
Fix $s = \sigma + it \in \widetilde{D}_T$.
We divide the sum into two sums;
\begin{align*}
&\sum_{\rho}\frac{x^{\rho - s} - x^{ 2 ( \rho - s ) }}{(\rho - s)^2} \\
=\ & \sum_{\substack{\rho;\\
0 < \beta \leq (1/2)( \sigma_{\LF} + \sigma_0 )}}\frac{x^{\rho - s} - x^{ 2 ( \rho - s ) }}{(\rho - s)^2}
+ \sum_{\substack{\rho;\\
(1/2)( \sigma_{\LF} + \sigma_0 ) < \beta \leq 1 }} \frac{x^{\rho - s} - x^{ 2 ( \rho - s ) }}{(\rho - s)^2} \\
=:& ~ \Sigma_L + \Sigma_R.
\end{align*}
By using the estimate $N_{\LF}(T + 1) - N_{\LF}(T) \ll_{\LF} \log T$ for $T \geq 2$ which is deduced by \eqref{eqn:GRV}, 
we have
\begin{align*}
& \Sigma_R
\ll x^{2(1 - \sigma )} \sum_{m \geq h -2} \sum_{m \leq | \gamma - t | < m +1} \frac{1}{| \gamma - t |^2} \\
&\leq x^{2(1 - \sigma )} \sum_{m \geq h -2} \frac{1}{m^2} \sum_{m \leq | \gamma - t | < m +1} 1 
\ll_{\LF}  x^{2(1 - \sigma )} \sum_{m \geq h -2} \frac{\log ( t + m )}{m^2}  \\
&\ll x^{2(1 - \sigma )} \sum_{m \geq h - 2} \frac{ \max\{ \log t , \log m \}}{m^2} 
\ll \frac{ x \log T}{h}. 
\end{align*}
On the other hand, 
we have
\begin{align*}
\Sigma_L
\ll_{\sigma_0} x^{- 2/5 ( \sigma_0 - \sigma_{\LF} )} \sum_{\rho} \frac{1}{1 + (t - \gamma)^2} 
\ll_{\LF} x^{- 2/5 ( \sigma_0 - \sigma_{\LF} )} \log T.
\end{align*}
Therefore we obtain
\begin{align*}
&\mathcal{F}_k \\
\ll&_{\LF, \sigma_0, N} \frac{1}{ (\log x)^2} \left( \frac{x^{2} (\log T)^2}{h^2}  +  x^{- 4/5 ( \sigma_0 - \sigma_{\LF} )} (\log T)^2 \right) \int_{D_T} \Phi_Q \left( \underline{\gamma}(t) - \underline{\theta}^{(\star)} \right) dt.
\end{align*}
\end{stepA}

\begin{stepA}\label{step:ST21}
Next we will give the lower bound for $\int_{D_T}   \Phi_Q \left( \underline{\gamma}(t) - \underline{\theta}^{(\star)} \right) dt $.
By the estimate \eqref{eqn:EB}, 
it holds that
\begin{align*}
&\int_{D_T}   \Phi_Q \left( \underline{\gamma}(t) - \underline{\theta}^{(\star)} \right) dt \\
=& \int_T^{2T}   \Phi_Q \left( \underline{\gamma}(t) - \underline{\theta}^{(\star)} \right) dt 
- \int_{[T, 2T] \setminus D_T}  \Phi_Q \left( \underline{\gamma}(t) - \underline{\theta}^{(\star)} \right) dt \\
\geq& \int_T^{2T} \Phi_Q \left( \underline{\gamma}(t) - \underline{\theta}^{(\star)} \right) dt 
- 2 h \sum_{\underline{n} \in \mathbb{Z}^{\mathcal{P}(Q)} } | \beta_{\underline{n}} | N_{\LF} \left( 1/2 ( \sigma_{\LF} + \sigma_0), 2 T \right) \\
=& \int_T^{2T}  \Phi_Q \left( \underline{\gamma}(t) - \underline{\theta}^{(\star)} \right) dt 
+ O \left(  h \exp ( C_0 Q ) N_{\LF} \left( 1/2 ( \sigma_{\LF} + \sigma_0), 2 T \right) \right).
\end{align*}
By the estimates \eqref{eqn:EB}, \eqref{eqn:MOL} and \eqref{eqn:EOD} and by substituting $M = \exp( 2 C_1 Q )$, 
we have
\begin{align*}
&\int_T^{2T}  \Phi_Q \left( \underline{\gamma}(t) - \underline{\theta}^{(\star)} \right) dt \\
\geq& T 
- \sum_{ \substack{ \mathbf{0} \neq\underline{n} \in \mathbb{Z}^{\mathcal{P}(Q)} ; \\\max_{p \leq Q} |n_p| \leq M }} | \beta_{\underline{n}} | 
\left| \int_{T}^{2T}  \exp \left( i t \sum_{p \leq Q } n_p \log p \right) dt \right| 
+ O\left( \frac{T}{M} \exp( C_1 Q ) \right)\\
=& T 
+ O\left( \exp \exp (C_3 Q) \right) 
+ O\left( \frac{T}{M} \exp( C_1 Q ) \right)\\
=& \left(  1 + O\left( \frac{ \exp \exp (C_3 Q) }{T} \right) + O \left( \frac{1}{ \exp(C_1 Q) } \right) \right) T .
\end{align*}
Taking $h = T^{\Delta_{\LF} ( 1/2 ( 1/2 + \sigma_0) ) / 2}$,
we have
\[
 h \exp ( C_0 Q ) N_{\LF} \left( 1/2 ( \sigma_{\LF} + \sigma_0), 2 T \right)
 \ll_{\LF, \sigma_0} \exp(C_0 Q) T^{ 1- \frac{\Delta_{\LF} ( 1/2 ( \sigma_{\LF} + \sigma_0) )}{2} }
\]
by the condition (C2).
Hence we have
\begin{align*}
\int_{D_T} \Phi_Q \left( \underline{\gamma}(t) - \underline{\theta}^{(\star)} \right) dt 
&\geq \Bigg{(}  1 + O_{\LF, \sigma_0} \left( \exp(C_0 Q) T^{- \frac{\Delta ( 1/2 ( \sigma_{\LF} + \sigma_0) )}{2} }  \right) + \\
& \quad \quad +  O\left( \frac{ \exp \exp (C_3 Q) }{T} \right) + O \left( \frac{1}{ \exp(C_1 Q) } \right) \Bigg{)} T.
\end{align*}
Taking $T = \exp\exp (C Q )$, $C = C_0 + C_3$, 
we have the inequality \eqref{eqn:ST2}.
\end{stepA}

\begin{stepA}
We will finish the proof in this step.
By the estimate \eqref{eqn:ST2} and by the estimates $\mathcal{A}_k$-$\mathcal{F}_k$, 
we obtain
\begin{align*}
\mathcal{I}
&\ll_{\LF, \sigma_0, N} \Big{(} Q^{1/2 - \sigma_0} ( \log Q )^{2N -2}+ \frac{x^4 ( \log x )^{2 N -2}}{T^{1/2}}  + \frac{x^2 ( \log T )^2}{T^{\Delta_{\LF} ( 1/2 ( 1/2 + \sigma_0) )}} +  \\
& \quad \quad + x^{- 4/5 ( \sigma_0 - \sigma_{\LF} )} (\log T)^2 \Big{)} \int_{D_T} \Phi_Q \left( \underline{\gamma}(t) - \underline{\theta}^{(\star)} \right) dt.
\end{align*}
Taking
\[
x= T^{\mu} \quad \textrm{with} \quad
\mu = \min\left\{ 1/200 ,\Delta ( 1/2 ( \sigma_{\LF} + \sigma_0) ) /10 \right\},
\]  
we have
\begin{equation}\label{eqn:ST11}
\mathcal{I}
\ll_{\LF, \sigma_0, N} Q^{1/2 ( 1/2 - \sigma_0 )} \int_{D_T} \Phi_Q \left( \underline{\gamma}(t) - \underline{\theta}^{(\star)} \right) dt .
\end{equation}
Put
\[
d(\sigma_0, E_{\LF})
= \max\left\{ {d_1(\sigma_0, E_{\LF}), \frac{8}{\sigma_0 - 1/2}} \right\}.
\]
By the estimates \eqref{eqn:APX} and \eqref{eqn:ST11} and step \ref{step:ST21}, 
we have \eqref{eqn:ST1}, \eqref{eqn:ST2} and \eqref{eqn:ST3} 
if 
\[
Q \geq C_1^{(1)} (\LF, \sigma_0, N) \left( \| \underline{c} \|_N + 1 / \varepsilon  \right)^{d(\sigma_0, E_{\LF})}
\]
holds.
Taking $C_1 (\LF, \sigma_0, N)=C C_1^{(1)}(\LF, \sigma_0, N)$, 
we have the conclusion.
\end{stepA}
\vspace{-4mm}
\end{proof}

\subsection{Proof of Corollary \ref{cor:CV}}
We will prove by using the same method as in \cite{V1988} and \cite{KV1992}.
Before stating the proof, 
we give some notations and lemmas.

Let $\mathbb{C}[[X]]$ be the formal power series ring with coefficients in $\mathbb{C}$ and indeterminate $X$.
We refer the reader to \cite{AIK2014} for the details of the general theory of the formal power series ring for example.
In what follows, we will use the following notations;
\begin{itemize}
\item Let $\mathbb{R}_+$ denote the set of nonnegative real numbers.
\item For any $N \in \mathbb{N}$, we write $[N]=[1, N] \cap \mathbb{N}$.
\item For empty index $\emptyset \in \mathbb{C}^{0}$, we adopt the convention that $\| \emptyset \|_0 = 0$.
\item Let $N\in\mathbb{N}$ and $[N]\subset A \subset \mathbb{N}$. 
For $\bm{z} = (z_j)_{j \in A} \in \mathbb{C}^A$, 
we write $\bm{z}_{[N]} = (z_j)_{j= 1}^N \in \mathbb{C}^N$.

\item Let $\alpha(X) = \sum_{n =0}^\infty a_n X^n \in \mathbb{C}[[X]]$ and $\beta(X) = \sum_{n =0}^\infty b_n X^n \in \mathbb{C}[[X]]$ with $b_n \in \mathbb{R}_{+}$ for any $n \in \mathbb{N}_0$.
The statement $\alpha \unlhd \beta$ means that $|a_n| \leq b_n$ holds for any $n \in \mathbb{N}$.
We also define $\alpha^{\abs}(X) \in \mathbb{C}[[X]]$ by $\alpha^{\abs} (X) = \sum_{n =0}^\infty |a_n| X^n$.
\item Let $\underline{Z} = (Z_j)_{j =1}^\infty \in \mathbb{C}^\mathbb{N}$ be the indeteminate and let $N$ be a positive integer. 
For a multi-index $\underline{i} = (i_1, \ldots, i_N) \in \mathbb{N}_0^N$ and for $f \left( \underline{Z} \right) \in \mathbb{C}[\underline{Z}] $,
define the symbol $\partial^{\underline{i}}$ as a differential operator given by
\[
\partial^{\underline{i}}\left(f (\underline{Z}) \right)
= \frac{\partial^{|\underline{i}|} f }{\partial Z_{1}^{i_1} \cdots \partial Z_{N}^{i_N}} (\underline{Z}) , \quad 
|\underline{i}| = i_1 + \cdots + i_N.
\]
In addition, 
we write $\underline{i} ! = i_1 ! \cdots i_N !$ and $(\underline{Z}_{[N]})^{\underline{i}}= Z_1^{i_1} \cdots Z_N^{i_N}$ .
\end{itemize}

Let $\underline{Z} = (Z_j)_{j =1}^\infty$ and $\underline{W} = (W_j)_{j =1}^\infty$ be the indeterminates and write $\underline{Z}_{[n]} = (Z_j)_{j =1}^n$ and $\underline{W}_{[n]} = (W_j)_{j =1}^n$ for $n \in \mathbb{N}$.
For $n \in \mathbb{N}$, 
define the polynomials $F_n(\underline{Z}_{[n]}) \in \mathbb{R}[\underline{Z}_{[n]}]$ and $G_n(\underline{W}_{[n]}) \in \mathbb{R}[\underline{Z}_{[n]}]$ by
\begin{equation}\label{eqn:rel}
\exp\left( \sum_{n = 1}^\infty Z_n X^n \right)
= 1 + \sum_{n =1}^\infty F_n(\underline{Z}_{[n]}) X^n
\end{equation}
and
\[
\log \left( 1 + \sum_{n =1}^\infty W_n X^n \right) = \sum_{n =1}^\infty G_n(\underline{W}_{[n]}) X^n.
\]
Then we have $\deg(F_n) =n$.
We also define the maps 
\[
\underline{F}:\mathbb{C}^\mathbb{N} \ni \underline{z} = (z_j)_{j =1}^\infty \mapsto \underline{F}(\underline{z}) = ( F_j (\underline{z}_{j}) )_{j =1}^\infty \in \mathbb{C}^\mathbb{N}, 
\]
\[
\underline{F}_{[N]}:\mathbb{C}^N \ni \bm{z} = (z_j)_{j =1}^N \mapsto \underline{F}_{[N]}(\bm{z}) = ( F_{j} (\bm{z}_{{[j]}}) )_{j =1}^N \in \mathbb{C}^\mathbb{N}, 
\]
and
\[
\underline{G}:\mathbb{C}^\mathbb{N} \ni \underline{w} = (w_j)_{j =1}^\infty \mapsto \underline{G} (\underline{w}) = ( G_j (\underline{w}_{[j]}) )_{j =1}^\infty \in \mathbb{C}^\mathbb{N}.
\]
We can easily check that $\underline{G}$ is the inverse mapping of $\underline{F}$.
For complex variables $\underline{z} = (z_j)_{j =1}^{\infty}$,
define 
\[
f(X;\underline{z}) = \sum_{n =1}^\infty z_n X^n, 
\]
\[
g(X, \underline{z}) = \log \left( 1 + \sum_{n =1}^\infty z_n X^n \right)
\]
and 
\[
h(X;\underline{z}) = - \log \left( 1 - \sum_{n =1}^\infty |z_n| X^n \right).
\]
Note that the relations
\begin{equation}\label{eqn:rel1}
f(X;\underline{z}) = g(X, \underline{F}(\underline{z}))
\quad \textrm{and} \quad g(X, \underline{z})
\unlhd h(X;\underline{z})
\end{equation}
hold.
\begin{lemma}\label{lem:BE}
Let $\underline{z}= (z_j)_{j=1}^\infty$ be complex variables.
\begin{itemize}
\item[{\rm (i)}] We have $f^{\abs}(X;\underline{z}) \unlhd h (X; \underline{F}(\underline{z}))$.
\item[{\rm (ii)}] Let $N \in \mathbb{N}$ and $\underline{i} = (i_1, \ldots, i_{N -1}) \in \mathbb{N}_0^{N -1}$ with $| \underline{i}  | \geq 1$. 
Then we have $f^{\abs}(X;\partial^{\underline{i}}\underline{F}(\underline{z})) \unlhd X^{S(\underline{i} )} \exp\left( f^{\abs} (X; \underline{z}) \right)$, 
where $\partial^{\underline{i}}\underline{F}(\underline{z}) = ( \partial^{\underline{i}} F_j (\underline{z}_{[j]}) )_{j =1}^\infty$ and $S(\underline{i} ) = i_1 + 2 i_2 + \cdots (N -1) i_{N -1}$.
\end{itemize}
\end{lemma}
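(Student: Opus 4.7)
My plan is to deduce both parts from the generating-function definitions of $F_n$ and $G_n$, together with the termwise comparison $g \unlhd h$ already recorded in \eqref{eqn:rel1}.

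For part~(i), I would apply the identity $f(X;\underline{z}) = g(X,\underline{F}(\underline{z}))$ from \eqref{eqn:rel1} directly: this writes $z_n = G_n((\underline{F}(\underline{z}))_{[n]})$, and the companion inequality $g(X,\underline{w}) \unlhd h(X;\underline{w})$---which is immediate upon comparing the expansions $\log(1+u) = \sum_{k\geq 1} (-1)^{k-1} u^k/k$ and $-\log(1-v)=\sum_{k\geq 1} v^k/k$ termwise---then yields that $|z_n|$ is bounded by the coefficient of $X^n$ in $h(X;\underline{F}(\underline{z}))$, which is precisely (i).

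For part~(ii), the crucial step is to differentiate the defining identity \eqref{eqn:rel} by the operator $\partial^{\underline{i}}$. Since $\partial/\partial Z_k$ applied to $\exp(\sum_n Z_n X^n)$ produces the factor $X^k$, one obtains the formal identity
\[
X^{S(\underline{i})}\exp\Big(\sum_{n=1}^\infty Z_n X^n\Big) = \sum_{n=1}^\infty (\partial^{\underline{i}} F_n)(\underline{Z}_{[n]}) X^n,
\]
and evaluating at $\underline{Z}=\underline{z}$ turns this into $f(X;\partial^{\underline{i}}\underline{F}(\underline{z})) = X^{S(\underline{i})}\exp(f(X;\underline{z}))$. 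To pass from this coefficient equality to the required $\unlhd$-inequality, I would use the explicit expansion $F_n(\underline{Z}_{[n]}) = \sum \prod_m Z_m^{k_m}/k_m!$, where the sum runs over tuples $(k_m)$ with $\sum_m m k_m = n$; this shows that $F_n$, and therefore also $\partial^{\underline{i}} F_n$, has only nonnegative rational coefficients, so $|(\partial^{\underline{i}} F_n)(\underline{z}_{[n]})| \leq (\partial^{\underline{i}} F_n)((|z_m|)_{m\leq n})$. The right-hand side is exactly the coefficient of $X^n$ in $X^{S(\underline{i})}\exp(f^{\abs}(X;\underline{z}))$, which gives the desired dominance.

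I do not anticipate any genuine obstacle: once the differentiation identity is in hand, the proof reduces to bookkeeping plus the nonnegativity observation on the coefficients of $F_n$. The only point needing a little care is verifying that $\partial^{\underline{i}}$ commutes with the formal infinite sum in $X$ on both sides of \eqref{eqn:rel}, which is harmless in $\mathbb{C}[\underline{Z}][[X]]$ since each coefficient involves only finitely many of the $Z_m$.
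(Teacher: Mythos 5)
Your proposal is correct and follows essentially the same route as the paper: part (i) is the composition of the two relations in \eqref{eqn:rel1}, and part (ii) hinges on applying $\partial^{\underline{i}}$ to \eqref{eqn:rel} to get $f(X;\partial^{\underline{i}}\underline{F}(\underline{z})) = X^{S(\underline{i})}\exp(f(X;\underline{z}))$ and then passing to absolute values. Your explicit appeal to the nonnegativity of the coefficients of $F_n$ (via the Bell-polynomial expansion) is the same observation the paper packages as $\exp(f(X;\underline{z})) \unlhd \exp(f^{\abs}(X;\underline{z}))$.
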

\begin{proof}
Note that $\alpha (X) \unlhd \beta (X)$ implies $\alpha^{\abs}(X) \unlhd \beta(X)$ for $\alpha(X) , \beta(X) \in \mathbb{C}[[X]]$. 
This and the relation \eqref{eqn:rel1} deduce the first assertion. 
We fix $N \in \mathbb{N}$ and $\underline{i} = (i_1, \ldots, i_{N -1}) \in \mathbb{N}_0^{N -1}$ with $| \underline{i}  | \geq 1$.
By differentiating $\exp\left( f(X;\underline{Z}) \right)$, 
we have
\[
\frac{\partial }{\partial Z_j} \exp\left( f(X;\underline{Z}) \right)
= X^j \exp\left( f(X; \underline{Z}) \right).
\]
Hence, applying $\partial^{\underline{i}}$ on the both sides of the equation \eqref{eqn:rel} with $\underline{z}$ in place of $\underline{Z}$,
we obtain
\[
X^{ S(\underline{i}) } \exp \left( f(X;\underline{z}) \right) 
= f(X;\partial^{\underline{i}}\underline{F}(\underline{z})). 
\]
We can confirm that $\exp \left( f(X;\underline{z}) \right) \unlhd \exp \left( f^{\abs}(X;\underline{z}) \right)$,
and this completes the proof.
\end{proof}

\begin{lemma}\label{lem:BE1}
Fix $\epsilon>0$ and $N \in \mathbb{N}$.
Let $z_0, a_0 \in \mathbb{C}$ and $\bm{z} = (z_1, z_2, \ldots, z_{N -1})$, $\bm{\alpha} = (\alpha_1, \ldots, \alpha_{N -1})\in \mathbb{C}^{N-1}$.
\begin{itemize}
\item[{\rm (i)}] We have 
\[
\| \bm{\alpha} \|
\ll_N \left( 1 + \| \underline{F}_{[N -1]}(\bm{\alpha}) \| \right)^{N -1}
\]
\item[{\rm (ii)}] If $\| (z_0, \bm{z}) - (\alpha_0, \bm{\alpha}) \| < \delta < 1$, then we have
\[
\left| e^{z_0} - e^{\alpha_0} \right| < 2 | e^{\alpha_0} | \delta
\]
and 
\begin{equation*}
\left\| e^{z_0} \underline{F}_{[N-1]} ( \bm{z} ) - e^{\alpha_0} \underline{F}_{[N-1]} ( \bm{\alpha} ) \right\| 
 \ll_N | e^{\alpha_0} | \left( 1 + \left\| \underline{F}_{[N -1]} (\bm{\alpha}) \right\| \right)^{(N-1)^2} \delta.
\end{equation*}
\end{itemize}
\end{lemma}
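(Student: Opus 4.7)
For Part (i), the natural approach is to invert. Since $\underline{G}$ is the inverse of $\underline{F}$, one has $\alpha_n = G_n\bigl(F_1(\bm\alpha_{[1]}), \dots, F_n(\bm\alpha_{[n]})\bigr)$ for every $n \leq N-1$, and this equality only involves $F_j$ with $j \leq n \leq N-1$. Using the explicit expansion
\[
G_n(W_1,\dots,W_n) = [X^n] \sum_{k\geq 1} \frac{(-1)^{k-1}}{k}\Bigl(\sum_{j \geq 1} W_j X^j\Bigr)^k,
\]
setting $W := \|\underline{F}_{[N-1]}(\bm\alpha)\|$ and bounding absolute values termwise yields $|\alpha_n| \leq \sum_{k=1}^{n} W^k/k \leq n(1+W)^{N-1}$. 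Summing over $n$ gives $\|\bm\alpha\| \ll_N (1+W)^{N-1}$.

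The first half of Part (ii) is immediate: factoring $e^{z_0} - e^{\alpha_0} = e^{\alpha_0}(e^{z_0-\alpha_0}-1)$, combining $|e^w - 1| \leq e^{|w|} - 1$ with the elementary inequality $e^\delta < 1 + 2\delta$ on $(0,1]$ (check $f(\delta) = 1 + 2\delta - e^\delta$ at the endpoints and at its unique critical point $\delta = \log 2$) gives $|e^{z_0} - e^{\alpha_0}| < 2|e^{\alpha_0}|\delta$.

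For the second half of Part (ii), I plan to use the generating-function identity
\[
e^{z_0} F_n(\bm z_{[n]}) = [X^n]\exp\Bigl(z_0 + \sum_{j=1}^{N-1} z_j X^j\Bigr).
\]
Writing $H(X) = \exp(\alpha_0 + \sum_j \alpha_j X^j)$ and $u = (z_0 - \alpha_0) + \sum_j (z_j-\alpha_j) X^j$, the difference of generating functions equals $H(X)(e^u - 1)$, and passing to $u^{\abs}$ coefficient-wise yields
\[
\bigl|e^{z_0} F_n(\bm z_{[n]}) - e^{\alpha_0} F_n(\bm\alpha_{[n]})\bigr| \leq |e^{\alpha_0}| \cdot [X^n]\Bigl( \exp\bigl({\textstyle\sum_{j}} |\alpha_j| X^j\bigr)\cdot(e^{u^{\abs}} - 1) \Bigr).
\]
Two elementary coefficient bounds finish the job. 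First, expanding $\exp(\sum|\alpha_j|X^j)$ multinomially, for $m \leq N-1$ only finitely many tuples $(k_1,\dots,k_{N-1})$ with $\sum j k_j = m$ contribute, each term is at most $\|\bm\alpha\|^{\sum k_j} \leq (1+\|\bm\alpha\|)^{N-1}$, so $[X^m]\exp(\sum|\alpha_j|X^j) \ll_N (1+\|\bm\alpha\|)^{N-1}$. Second, splitting $u^{\abs} = \rho_0 + v$ with $\rho_0 < \delta$ and $v$ having each coefficient $<\delta$, and using $e^u - 1 = e^{\rho_0}(e^v-1) + (e^{\rho_0}-1)$, one gets $[X^m](e^{u^{\abs}}-1) \ll_N \delta$ for $m \leq N-1$. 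Multiplying the two bounds, applying Part~(i) to convert $(1+\|\bm\alpha\|)^{N-1}$ into $(1+W)^{(N-1)^2}$, and summing over $n = 1,\dots,N-1$ gives the desired estimate.

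The only place requiring some care is the combinatorial bound on $[X^m]\exp(\sum|\alpha_j|X^j)$ together with the coexistence of a constant and positive-degree part in $u$; both are handled cleanly by the splitting $e^u - 1 = e^{\rho_0}(e^v-1) + (e^{\rho_0}-1)$, after which the argument is a straightforward application of Lemma~\ref{lem:BE} in spirit (though here it is more efficient to argue directly with the generating function rather than Taylor-expand $F_n$).
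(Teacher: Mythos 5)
Your argument is correct, and it follows a genuinely different route from the paper's. For part~(i), the paper invokes Lemma~\ref{lem:BE}(i) to obtain the formal domination $\sum_n |\alpha_n|X^n \unlhd -\log\bigl(1 - \sum_n |F_n(\bm\alpha_{[n]})|X^n\bigr)$ and then evaluates the resulting power series at the point $X = \bigl(3(1+\|\underline{F}_{[N-1]}(\bm\alpha)\|)\bigr)^{-1}$; you instead read off the $[X^n]$-coefficient of the $\log$-series directly, bounding $|\alpha_n|\le\sum_{k\le n}W^k/k$, which is more elementary and sidesteps both Lemma~\ref{lem:BE}(i) and the evaluation trick. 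For the second half of part~(ii), the paper's route is to bound $\sum_n \bigl|\partial^{\underline{i}}F_n(\bm\alpha_{[n]})\bigr|$ via Lemma~\ref{lem:BE}(ii) together with part~(i) (this is the intermediate estimate \eqref{eqn:1072}), insert that into the multivariate Taylor expansion of $F_n$ about $\bm\alpha$ to control $F_n(\bm z_{[n]})-F_n(\bm\alpha_{[n]})$, and only then recombine with the separate bound on $|e^{z_0}-e^{\alpha_0}|$. You bypass the partial-derivative estimate and the Taylor expansion entirely by observing that the single generating function $\exp\bigl(z_0 + \sum_j z_j X^j\bigr)$ packages both the exponential prefactor and the $F_n$'s, so the whole difference factors as $H(X)(e^u-1)$ and the problem reduces to two elementary coefficient bounds, with part~(i) applied once at the very end. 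Your version is shorter and more self-contained (Lemma~\ref{lem:BE}(ii) is not used at all); the paper's version is more modular in that the derivative bound \eqref{eqn:1072} is isolated as a reusable intermediate statement. Both arrive at the same exponent $(N-1)^2$, and both use the hypothesis $\delta<1$ in the same way to absorb higher powers of $\delta$ into a single factor of $\delta$.
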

\begin{proof}
Let 
\[
\underline{\bm{\alpha}^{(0)}} = (\alpha_1, \ldots, \alpha_{N -1}, 0, 0, \ldots) \in \mathbb{C}^\mathbb{N} 
\]
and
\[
\underline{w^{(0)}} = (F_1(\bm{\alpha}_{[1]}), F_2(\bm{\alpha}_{[2]}), \ldots, F_{N -1}(\bm{\alpha}), 0, 0, \ldots) \in \mathbb{C}^\mathbb{N}.
\]
Note that
\[
\underline{G}\left( \underline{w^{(0)}} \right)_{[N -1]} = \bm{\alpha},
\]
holds.
Then we have
\begin{align}
&\sum_{n =1}^{N -1} | \alpha_n | X^n
= f^{\abs}\left( X;  \underline{\alpha^{(0)}} \right)
\unlhd f^{\abs}\left( X; \underline{G}(  \underline{w^{(0)}}  ) \right)\label{eqn:1071} \\
\unlhd&\, h\left( X; \underline{F}\left( \underline{G}(  \underline{w^{(0)}}  )\right) \right)
= h\left(X; \underline{w^{(0)}} \right)
= - \log \left( 1 - \sum_{n =1}^{N -1} | F_n ( \bm{\alpha}_{[n]} ) | X^n \right) \nonumber
\end{align}
by (i) of Lemma \ref{lem:BE}.
Evaluating \eqref{eqn:1071} at $X = \left(3 \left(1 + \left\| F_{[N -1]} ( \bm{\alpha} ) \right\| \right) \right)^{-1}$, 
we have
\begin{align*}
\left( \frac{1}{3 \left(1 + \left\| F_{[N -1]} ( \bm{\alpha} ) \right\| \right)}  \right)^{N -1} \sum_{n =1}^{N -1} |\alpha_n|
&\leq \sum_{n =1}^{N -1} |\alpha_n| \left(\frac{1}{3\left(1 + \left\| F_{[N -1]} ( \bm{\alpha} ) \right\| \right)}\right)^n \\
&\leq - \log \left( 1 - \sum_{n =1}^{N -1}\left(\frac{1}{3}\right)^n \right) 
\ll 1, 
\end{align*}
which deduces $\| \bm{\alpha} \| \ll_N \left(1 + \left\| F_{[N -1]} ( \bm{\alpha} ) \right\| \right)^{N -1}$.
Hence, we have the first assertion of this lemma.

To prove the second assertion of this lemma, 
we will show 
\begin{equation}\label{eqn:1072}
\sum_{n=1}^{N-1} \left| \partial^{\underline{i}} F_n( \bm{\alpha}_{[n]} ) \right| 
\ll_N \left( 1 + \| \underline{F}_{[N -1]} (\bm{\alpha}) \| \right)^{(N -1)^2}
\end{equation}
for $\underline{i}\in \mathbb{N}_0^n$ and $| \underline{i}| \geq 1$.
By (ii) of Lemma \ref{lem:BE}, 
we have
\begin{align}
&\sum_{n=1}^{N-1} \left| \partial^{\underline{i}} F_n( \bm{\alpha}_{[n]} ) \right| X^n 
\unlhd f^{\abs}(X;\partial^{\underline{i}}\underline{F}(\underline{\alpha^{(0)}})) \label{eqn:1073} \\
\unlhd& X^{S(\underline{i} )} \exp\left( f^{\abs} (X; \underline{\alpha^{(0)}}) \right)
= X^{S(\underline{i})} \exp \left( \sum_{n =1}^{N-1} |\alpha_n| X^n \right). \nonumber
\end{align}
By evaluating \eqref{eqn:1073} at $X = \left(3 \left(1 + \left\|   \bm{\alpha}  \right\| \right) \right)^{-1}$, 
we have
\begin{align*}
\left(\frac{1}{3\left(1 + \left\|   \bm{\alpha}  \right\| \right)}\right)^{N -1} \sum_{n=1}^{N-1} \left| \partial^{\underline{i}} F_n( \bm{\alpha}_{[n]} ) \right| 
\leq& \sum_{n=1}^{N-1} \left| \partial^{\underline{i}} F_n( \bm{\alpha}_{[n]} ) \right| \left(\frac{1}{3\left(1 + \left\|   \bm{\alpha}  \right\| \right)}\right)^n \\
 \leq& \left(\frac{1}{3}\right)^{S(\underline{i})} \exp \left(  \sum_{n =1}^{N-1} \left(\frac{1}{3}\right)^n \right)
\ll 1,
\end{align*}
which implies $\sum_{n=1}^{N-1} \left| \partial^{\underline{i}} F_n( \bm{\alpha}_{[n]} ) \right| \ll_N \left(1 + \left\|   \bm{\alpha}  \right\| \right)^{N -1}$.
By (i) of Lemma \ref{lem:BE1}, 
we obtain
\[
\sum_{n=1}^{N-1} \left| \partial^{\underline{i}} F_n( \bm{\alpha}_{[n]} ) \right|
\ll_N \left( 1 + \| \underline{F}_{[N -1]} (\bm{\alpha}) \| \right)^{(N -1)^2},
\]
which gives the estimate \eqref{eqn:1072}.

We will show the second assertion. 
Let $\| (z_0, \bm{z}) - (\alpha_0, \bm{\alpha}) \| < \delta < 1$.
By the Taylor series expansion, we have
\begin{equation}\label{eqn:1074}
\left| e^{z_0} - e^{\alpha_0} \right|
= \left| e^{\alpha_0} \right| \left| e^{z_0 - \alpha_0} -1 \right|
\leq \left| e^{\alpha_0} \right| \sum_{n =1}^\infty\frac{\delta^n}{n!}
\leq 2 \left| e^{\alpha_0} \right| \delta.
\end{equation}
By the Taylor series expansion, by the equation \eqref{eqn:1072} and by using $\deg F_n = n$ , we have
\begin{align}
&F_n(\bm{z}_{[n]}) - F_n(\bm{\alpha}_{[n]}) \label{eqn:1075} \\
=& \sum_{\substack{ \underline{i} = (i_1, \ldots, i_{n}) \in \mathbb{N}_0^n;\\ 1 \leq | \underline{i} | \leq n }} \frac{  \partial^{\underline{i}} F_n ( \bm{\alpha}_{[n]} ) }{ \underline{i} ! } \left(\bm{z}_{[n]} - \bm{\alpha}_{[n]} \right)^{\underline{i}}
\ll_N \left( 1 + \| \underline{F}_{[N -1]} (\bm{\alpha}) \| \right)^{(N -1)^2} \delta \nonumber
\end{align}
for $1 \leq n \leq N -1$.
Hence we deduce
\[
\left\|  \underline{F}_{[N-1]} ( \bm{z}) -  \underline{F}_{[N-1]} ( \bm{\alpha} ) \right\|
\ll_N \left( 1 + \| \underline{F}_{[N -1]} (\bm{\alpha}) \| \right)^{(N -1)^2} \delta .
\]
Therefore we obtain
\begin{align*}
&\left\| e^{z_0} \underline{F}_{[N-1]} ( \bm{z} ) - e^{\alpha_0} \underline{F}_{[N-1]} ( \bm{\alpha} ) \right\|\\
\leq& | e^{z_0}  | \left\|  \underline{F}_{[N-1]} ( \bm{z} ) -  \underline{F}_{[N-1]} ( \bm{\alpha} ) \right\|
+ \left\| \underline{F}_{[N-1]} ( \bm{\alpha} ) \right\| | e^{z_0} - e^{\alpha_0} | \\
\leq& | e^{z_0} - e^{\alpha_0}  | \left\|  \underline{F}_{[N-1]} ( \bm{z} ) -  \underline{F}_{[N-1]} ( \bm{\alpha}) \right\| \\
&+ | e^{\alpha_0} | \left\|  \underline{F}_{[N-1]} ( \bm{z} ) -  \underline{F}_{[N-1]} ( \bm{\alpha} ) \right\| 
+ \left\| \underline{F}_{N-1} ( \bm{\alpha} ) \right\| | e^{z_0} - e^{\alpha_0} | \\
\ll&_N | e^{\alpha_0} | \left( 1 + \left\| \underline{F}_{[N -1]} (\bm{\alpha}) \right\| \right)^{(N-1)^2} \delta
\end{align*}
by the estimates \eqref{eqn:1074} and \eqref{eqn:1075}. 
This completes the proof.
\end{proof}

\begin{proof}[Proof of Corollary \ref{cor:CV}]
Let $\epsilon \in (0, 1)$, $\underline{c} = (c_k)_{k =0}^{N -1} \in \mathbb{C}^N$ with $| c_0 | \neq 0$.
Put
\[
z_0(t) 
=\log \LF ( \sigma_0 + it ),
\]
\[
\bm{z}(t)
= \left( \frac{1}{1!} \frac{d}{d s} \log \LF (\sigma_0 + it), \ldots, \frac{1}{(N -1)!}\frac{d^{N-1}}{ds^{N-1}} \log \LF (\sigma_0 + it)  \right),
\]
\[
\beta_1 = \frac{c_1}{c_0 \cdot 1!},~
\beta_2 = \frac{c_2}{c_0 \cdot 2 !}
\ldots,~
\beta_{N -1} = \frac{c_{N -1}}{c_0 (N -1)!}, \bm{\beta}= (\beta_k)_{k =1}^{N-1}
\]
and 
\[
\alpha_0 = \log c_0,~
\alpha_1 = G_1(\bm{\beta}_{[1]}), \ldots, \alpha_n = G_{N -1}(\bm{\beta}_{[N -1]}), \bm{\alpha} = (\alpha_j)_{j =1}^{N-1} . 
\]
Note that $\bm{\beta} = \underline{F}_{[N -1]} \left( \bm{\alpha}\right ) $ holds.
Let
\[
\delta = \delta ( \epsilon, \underline{c}, \Delta_N )
= \frac{ \Delta_N \cdot \epsilon }{ \left( 1 +  \left |e^{\alpha_0} \right)\right|  \left( 1 + \left\| \underline{F}_{[N -1]}(\bm{\alpha}) \right\| \right)^{(N-1)^2} } 
\in (0, 1), 
\]
where $\Delta_N$ is sufficiently small depending on $N$.
Note that, by the relation \eqref{eqn:rel}, we have
\[
e^{z_0(t)} \underline{F}_{[N -1]} \left( \bm{z}(t) \right)
=\left( \frac{1}{1!} \frac{d}{ds} \LF( \sigma_0 + it ), \ldots, \frac{1}{(N -1)!} \frac{d^{N-1}}{ds^{N-1}}\LF( \sigma_0 + it ) \right)
\]
for $\LF( \sigma_0 + it ) \neq 0$. 
Hence if
\begin{equation}\label{eqn:LLF}
\left\| \left( z_0(t), \bm{z}(t) \right) - \left( \alpha_0, \bm{\alpha} \right) \right\| < \delta 
\quad \textrm{and} \quad
\LF( \sigma_0 + it ) \neq 0
\end{equation}
hold, then we have
\[
\left| \frac{d^k}{ds^k} \LF( \sigma_0 + it ) - c_k \right| < \epsilon
\quad \textrm{for $k = 0,1, \ldots, N-1$}
\]
by Lemma \ref{lem:BE1} {\rm (ii)}.
To get the inequality \eqref{eqn:LLF},
it is enough to notice
\[
\left| \frac{d^k}{ds^k} \log \LF ( \sigma_0 + it ) - k ! \alpha_k \right| < \frac{\delta}{N}
\quad \textrm{for $k = 0,1, \ldots, N-1$}.
\]
By Lemma \ref{lem:BE1} {\rm (i)}, we have
\[
\left\| \left(\alpha_0, 1! \alpha_1, \ldots, (N -1)! \alpha_{N -1} \right) \right\| + \frac{N}{\delta}
\ll_{N} | \log c_0 | 
+ \left( \frac{ \| \underline{c} \| }{ | c_0 | } \right)^{(N-1)^2} \frac{1 + | c_0 |}{ \epsilon}.
\]
Combining with Theorem \ref{thm:MTH}, we have the conclusion.
\end{proof}

\subsection{Proof of Corollary \ref{cor:GLMSS}}
Although the proof is almost the same as in \cite{GLMSS2010}, 
we shall give the full details.
\begin{proof}[Proof of Corollary \ref{cor:GLMSS}]
Let the setting be as in Corollary \ref{cor:GLMSS}.
We will use the Taylor expansion series to prove the corollary.
Recall the Cauchy integral formula
\[
g^{(k)}(s_0)
= \frac{k!}{2 \pi i} \int_{| z - s_0 | = r} \frac{g(z)}{(z - s_0)^{k +1}}dz.
\]
Hence we have
\begin{align*}
|g^{(k)}(s_0) (s - s_0)^k |
\leq k ! M (g) \delta_0^k
\end{align*}
for $| s - s_0 | \leq \delta_0 r$.
By using the Taylor expansion series, 
we have
\begin{equation*}
\Sigma_1
:= \left| g (s) - \sum_{0 \leq k <N} \frac{g^{(k)}(s_0)}{k!} ( s - s_0 )^k \right|
\leq M (g) \sum_{k = N}^\infty \delta_0^k
= M(g) \frac{\delta_0^N}{1 - \delta_0}
\end{equation*}
for $ | s - s_0 | \leq \delta_0 r $.
We chose $N = N ( \delta_0, \epsilon, M(g))$ such that
\[
M(g) \frac{\delta_0^N}{1 - \delta_0} 
< \frac{\epsilon}{3}.
\]
We apply Corollary \ref{cor:CV} with $c_k = g^{(k)}(s_0)$ and $(\epsilon /3) \exp( - \delta_0 r )$ in place of $\epsilon$.
We chose $T = T(\LF, \sigma_0, g, \epsilon, \delta_0, N)$ such that
\[
T 
\geq \max\left\{ \exp\exp \left( C_2(\LF, \sigma_0, N) B \left( N, \mathbf{g}, (\epsilon/3)\exp( - \delta_0 r ) \right)^{d(\sigma_0, E_{\LF})} \right) , r \right\}.
\]
Then there exists $t_1 \in [T, 2T]$ such that
\[
\left| \LF^{(k)} ( \sigma_0 + it_1 ) - g^{(k)}( \sigma_0 + it_0 ) \right| < \frac{\epsilon}{3} \exp( - \delta_0 r )
\]
for $0 \leq k <N$.
Put $\tau: = t_1 - t_0$ and note that $\sigma_0 + i t_1 = s_0 + i \tau$ holds.
Remark that our choice of $T \geq r$ make the disc $\{ s ~;~ |s - s_0 | \leq \delta_0 r \} + i \tau$ avoid from including the pole of $\LF (s )$.
Hence we have
\begin{align*}
\Sigma_2
&:= \left| \sum_{0 \leq k < N} \frac{\LF^{(k)}(s_0 + i \tau)}{k!} ( s - s_0 )^k - \sum_{0 \leq k < N} \frac{ g^{(k)} ( s_0 ) }{ k !} (s - s_0)^k \right| 
\\
&< \frac{\epsilon}{3} \exp( - \delta_0 r ) \sum_{0 \leq k < N} \frac{( \delta_0 r )^k}{k!}
< \frac{\epsilon}{3} 
\end{align*}
for $| s - s_0 | \leq \delta_0 r$.
On the other hand, 
we have
\begin{equation*}
\Sigma_3
:= \left| \LF (s + i \tau) - \sum_{0 \leq k < N} \frac{\LF^{(k)} ( s_0 + i \tau ) }{k!} ( s - s_0 )^k \right| < M ( \tau ) \frac{\delta^N}{ 1 - \delta } \label{eqn:EA3}
\end{equation*}
for $ | s - s_0 | \leq \delta r $ and $0 < \delta \leq \delta_0$.
Therefore we have
\[
\left| \LF ( s + i \tau ) - g (s) \right|
\leq \Sigma_1 + \Sigma_2 + \Sigma_3
< \frac{2}{3} \epsilon + M ( \tau ) \frac{ \delta^N }{1 - \delta}
\]
for $ | s - s_0 | \leq \delta r $ and $0 < \delta \leq \delta_0$.
Choosing $\delta$ which satisfies
\[
M(\tau;\LF) \frac{\delta^N}{1 - \delta}
< \frac{\epsilon}{3}, 
\]
we have the conclusion.
\end{proof}

\begin{ackname}
The author is deeply grateful to Professor Kohji Matsumoto for many useful comments and suggestions.
The author would like to thank Prof.~ Tomohiro Ikkai, Dr.~ Yuta Suzuki, Dr.~ Wataru Takeda, Mr.~ Sh\={o}ta Inoue and Mr.~ Hirotaka Kobayashi for giving the author some good advise.
\end{ackname}

\begin{flushleft}
{\footnotesize
{\sc
Graduate School of Mathematics, Nagoya University,\\
Chikusa-ku, Nagoya 464-8602, Japan.
}\\
{\it E-mail address}, K. Endo: {\tt m16010c@math.nagoya-u.ac.jp}
}
\end{flushleft}
\end{document}